\setlist[enumerate,1]{label = \normalfont(\roman*), ref = (\roman*)}
\newtheorem{theorem}{Theorem}[section]
\newtheorem{lemma}[theorem]{Lemma}
\newtheorem{proposition}[theorem]{Proposition}
\newtheorem{corollary}[theorem]{Corollary}
\newenvironment{assumption}[1]
 {\taggedtheoremx}
 {\endtaggedtheoremx}
\theoremstyle{definition}
\newtheorem{definition}[theorem]{Definition}
\newtheorem{remark}[theorem]{Remark}
\newcommand{\R}{\mathbb{R}}
\newcommand{\C}{\mathbb{C}}
\newcommand{\Ext}{\mathrm{E}} %
\renewcommand{\L}{\mathrm{L}}
\newcommand{\W}{\mathrm{W}}
\newcommand{\Cont}{\mathrm{C}}
\newcommand{\Lip}{\mathrm{Lip}}
\newcommand{\IW}{\mathbb{W}}
\newcommand{\ID}{\mathbb{D}}
\newcommand{\e}{\mathrm{e}}
\renewcommand{\d}{\,\mathrm{d}}
\newcommand{\dd}[1]{\,\frac{\mathrm{d} #1}{#1}}
\let\rr\r
\renewcommand{\r}{\mathrm{r}}
\newcommand{\eps}{\varepsilon}
\newcommand{\B}{\mathrm{B}}
\renewcommand{\H}{\mathrm{H}}
\newcommand{\cl}[1]{\overline{#1}}
\newcommand{\cT}{\mathcal{T}}
\newcommand{\Sec}{\mathrm{S}}
\newcommand{\cL}{\mathcal{L}}
\newcommand{\cI}{\mathcal{I}}
\newcommand{\cJ}{\mathcal{J}}
\newcommand{\HM}{\mathcal{H}}
\newcommand{\SP}{\, |\,}
\renewcommand\Re{\operatorname{Re}}
\newcommand{\pLB}[1]{\llbracket #1 \rrbracket}
\newcommand{\qLB}[1]{\llparenthesis \hspace{1pt} #1 \hspace{1pt}\rrparenthesis}
\let\ii\i
\DeclareMathOperator{\bd}{\partial \!}
\DeclareMathOperator{\supp}{supp}
\DeclareMathOperator{\dist}{d}
\DeclareMathOperator{\diam}{diam}
\DeclareMathOperator{\dom}{D}
\def\XXint#1#2#3{{\setbox0=\hbox{$#1{#2#3}{%
\int}$ }
\vcenter{\hbox{$#2#3$ }}\kern-.6\wd0}}
\title[Square root of elliptic systems with mixed boundary conditions II]{$\L^p$-estimates for the square root of elliptic systems with mixed boundary conditions II}
\author{Sebastian Bechtel}
\address{Institut de Math\'{e}matiques de Bordeaux, Universit\'{e} Bordeaux, UMR CNRS 5251, 351 Cours de la Lib\'{e}ration 33405, Talence, France}
\email{sebastian.bechtel@math.u-bordeaux.fr}
\subjclass[2020]{Primary: 35J47, 47A60. Secondary: 42B20, 26D10.}
\date{\today}
\dedicatory{}
\keywords{complex elliptic systems of second order, mixed boundary conditions, Kato square root problem, Calder\'{o}n--Zygmund decomposition for Sobolev functions, Hardy's inequality, Lax--Milgram isomorphism}
\begin{document}
\begin{abstract}

We show $\L^p$-estimates for square roots of second order complex elliptic systems $L$ in divergence form on open sets in $\R^d$ subject to mixed boundary conditions. The underlying set is supposed to be locally uniform near the Neumann boundary part, and the Dirichlet boundary part is Ahlfors--David regular. The lower endpoint for the interval where such estimates are available is characterized by $p$-boundedness properties of the semigroup generated by $-L$, and the upper endpoint by extrapolation properties of the Lax--Milgram isomorphism. Also, we show that the extrapolation range is relatively open in $(1,\infty)$.

\end{abstract}
\maketitle
\section{Introduction and main results}
\label{Sec: Introduction}

Let $L$ be a second order complex elliptic system in divergence form on an open set $O\subseteq \R^d$, $d\geq 2$, formally given by
\begin{align}
	Lu = -\sum_{i,j=1}^d \partial_i (a_{ij} \partial_j u) - \sum_{i=1}^d \partial_i (b_i u) + \sum_{j=1}^d c_j \partial_j u + d u.
\end{align}
The function $u$ takes its values in $\C^m$, where $m \geq 1$ is the size of the system, and the coefficients $a_{ij}, b_i, c_j, d$ are valued in $\cL(\C^m)$ and are only supposed to be bounded, measurable, and elliptic in the sense of~\eqref{label-Garding}. We refer to $d$ and $m$ as \emph{dimensions}, and to the implied constants in boundedness and ellipticity as \emph{coefficient bounds}. The system $L$ is subject to mixed boundary conditions in the following sense: We fix some closed subset $D\subseteq \bd O$, on which we impose homogeneous Dirichlet boundary conditions, and in the complementary boundary part $N\coloneqq \bd O \setminus D$ we impose natural boundary conditions.

To make this more precise, denote by $\W^{1,2}_D(O)$ the first-order Sobolev space on $O$ with a homogeneous Dirichlet boundary condition on $D$. A proper definition will be given in Section~\ref{Subsec: Sobolev spaces}. Put $\IW^{1,2}_D(O) \coloneqq \W^{1,2}_D(O)^m$ and define the sesquilinear form $a\colon \IW^{1,2}_D(O) \times \IW^{1,2}_D(O) \to \C$ by
\begin{align}
\label{a}
 a(u,v) = \int\limits_O \sum_{i,j=1}^d   a_{ij} \partial_j u \cdot \cl{\partial_i v} + \sum_{i=1}^d  b_{i} u \cdot \cl{\partial_i v} + \sum_{j=1}^d   c_{j} \partial_j u \cdot \cl{v} + d u \cdot \cl{v}  \d x.
\end{align}
Define $L$ as the operator in $\L^2(O)^m$ associated with $a$. Then $L$ is invertible, maximal accretive, and sectorial. We take a closer look on the properties of $L$ in Section~\ref{Subsec: elliptic system}. In particular, $L$ possesses a square root $L^{\frac{1}{2}}$. The question if $\dom(L^{\frac{1}{2}}) = \IW^{1,2}_D(O)$ with equivalent norms became famous as \emph{Kato's square root problem}, and could be answered in the affirmative, first on the whole space~\cite{Kato-Square-Root-Proof}, and later under suitable geometric requirements on open sets~\cite{AKM,Darmstadt-KatoMixedBoundary,BEH}.

Phrased differently, Kato's square root property asserts that $L^{\frac{1}{2}}$ is an isomorphism $\IW^{1,2}_D(O) \to \L^2(O)^m$. It is then a natural question if $L^{\frac{1}{2}}$ also extrapolates in a suitable range of $p$ to an isomorphism $\IW^{1,p}_D(O) \to \L^p(O)^m$, where $\IW^{1,p}_D(O) \coloneqq \W^{1,p}_D(O)^m$. In the case $O=\R^d$ and $m=1$, an optimal range of such $p$ was given by Auscher~\cite{Memoirs}. When $O$ is a bounded and interior thick domain, $D$ is Ahlfors--David regular (see Assumption~\ref{Ass: D}), $O$ satisfies the so-called \emph{weak Lipschitz condition around $N$}, and the coefficients are real and scalar, then a first result for mixed boundary conditions was given by Auscher, Badr, Haller-Dintelmann, and Rehberg~\cite{ABHR}. Under the same geometric assumptions, but with complex and matrix-valued coefficients, Egert showed in~\cite{E} that $L^{\frac{1}{2}}$ extrapolates to an isomorphism $\IW^{1,p}_D(O) \to \L^p(O)^m$ if $p\in (p_{-}(L), 2+\eps)$. Here, $p_{-}(L)$ is the infimum of $\cI(L)$, where
\begin{align}
	\cI(L) \coloneqq \{ p\in (1,\infty) \colon \{ \e^{-tL} \}_{t>0} \text{ is $\L^p$-bounded} \},
\end{align}
and $\eps>0$ depends on geometry, dimensions, and coefficient bounds. We will write $\pLB{p} \coloneqq \sup_{t>0} \| \e^{-tL} \|_{\L^p \to \L^p}$ whenever $p\in \cI(L)$. In the situation of~\cite{ABHR} one has, for instance, $p_{-}(L) = 1$, see~\cite[Prop.~4.6~(i)]{ABHR}.
The notion of $\L^p$-bounded families of operators is made precise in Definition~\ref{Def: boundedness and ODE}. Under \emph{extrapolation} we understand that $L^{\frac{1}{2}}$ and $L^{-\frac{1}{2}}$, initially defined on $\IW^{1,p}_D(O) \cap \IW^{1,2}_D(O)$ and $\L^p(O)^m \cap \L^2(O)^m$, respectively, extend by continuity to bounded operators $\IW^{1,p}_D(O) \to \L^p(O)^m$ and $\L^p(O)^m \to \IW^{1,p}_D(O)$. In this case, we say that $L^{\frac{1}{2}}$ is a \emph{$p$-isomorphism}. Similarly, we say that $L^{\frac{1}{2}}$ and $L^{-\frac{1}{2}}$ are \emph{$p$-bounded} and so on.

The geometric constellation in~\cite{ABHR,E} was primarily dictated by the available $\L^2$-theory for the Kato square root problem from~\cite{Darmstadt-KatoMixedBoundary,Laplace-Extrapolation}. Meanwhile, by a recent result of the author together with Egert and Haller-Dintelmann~\cite{BEH}, Kato's square root problem could be solved in the affirmative if $O$ is a possibly unbounded, non-connected, not interior thick open set which is \emph{locally uniform near $N$} (see Assumption~\ref{Ass: N}), and whose Dirichlet part $D$ is Ahlfors--David regular (see Assumption~\ref{Ass: D}). It is one aim of this article to extend the results of~\cite{E} to this setting. %

Also, it was shown in~\cite{E} that if $p<p_{-}(L)$, then $L^{\frac{1}{2}}$ is not a $p$-isomorphism, but sharpness for the endpoint $p=p_{-}(L)$ was not treated. Moreover, it was left as an open question in~\cite[p.5]{E} to characterize the optimal range of $p>2$ in the case of mixed boundary conditions.

In summary, it is the goal of this paper to establish the following improvements of the state of the art presented in~\cite{E}. In fact our article should be seen as a part II to~\cite{E}, which is also the motivation for our title.

\begin{enumerate}
	\item The square root isomorphism $L^{\frac{1}{2}}: \IW^{1,2}_D(O) \to \L^2(O)^m$ can be extrapolated away from $2$ even if $O$ is only supposed to be open, locally uniform near $N$, and with Ahlfors--David regular Dirichlet part $D$, \label{item: Q1}
	\item the interior of the optimal range in which $L^{\frac{1}{2}}$ is a $p$-isomorphism is $(p_{-}(L), \tilde q_{+}(L))$, \label{item: Q2}
	\item if $p_{-}(L)>1$, then $L^{\frac{1}{2}}$ is not a $p_{-}(L)$-isomorphism, similarly for $\tilde q_{+}(L) < \infty$. \label{item: Q3}
\end{enumerate}

Here, $\tilde q_{+}(L)$ is defined as follows: We know from the Lax--Milgram lemma that $\cL : \IW^{1,2}_D(O) \to \IW^{-1,2}_D(O)$ is an isomorphism, where we put $\IW^{-1,p}_D(O) \coloneqq (\IW^{1,p'}_D(O))^*$ for $p\in (1,\infty)$. Here, $p'$ is the H\"{o}lder-conjugate exponent to $p$. Similarly to the case of $L^{\frac{1}{2}}$, we say that $\cL$ is a $p$-isomorphism if $\cL^{-1}$ extends from  $\IW^{-1, p}_D(O) \cap \IW^{-1,2}_D(O)$ to a bounded operator $\IW^{-1,p}_D(O) \to \IW^{1,p}_D(O)$. Then define the set
\begin{align}
\label{Def: cJ}
	\cJ(L) \coloneqq \{ p\in (1,\infty) \colon \cL \text{ is a $p$-isomorphism} \},
\end{align}
and put $\tilde q_{+}(L) \coloneqq \sup \cJ(L)$. Moreover, for $q\in \cJ(L)$, put $\qLB{q} \coloneqq \| \cL^{-1} \|_{\IW^{-1,p}_D(O) \to \IW^{1,p}_D(O)}$. The quantities $\pLB{p}$ and $\qLB{q}$ will be useful to quantify dependence of implicit constants in Theorem~\ref{Thm: main result}.

\begin{remark}
\label{Rem: q tilde plus}
	Clearly, $\cL$ always extends from $\IW^{1,p}_D(O) \cap \IW^{1,2}_D(O)$ to a bounded operator $\IW^{1,p}_D(O) \to \IW^{-1,p}_D(O)$, and the operator norm of this extension is controlled by the coefficient bounds. If $p \in \cJ(L)$, then this extension is one-to-one and onto, and its inverse coincides on $\IW^{-1, p}_D(O) \cap \IW^{-1,2}_D(O)$ with the inverse provided by the Lax--Milgram lemma. The compatibility with the inverse operator in the case $p=2$ is the main advantage of our definition, as it enables us to invoke interpolation arguments.
\end{remark}

On $\R^d$, a characterization of the upper endpoint using $\tilde q_+(L)$ already appeared in~\cite[Cor.~4.24]{Memoirs}. However, the version back then contained a flaw that originates from compatibility issues (confer with Remark~\ref{Rem: q tilde plus}) due to the unbounded geometry. This was pointed out by the author of~\cite{Memoirs} and Egert in their recent monograph~\cite[p. 135]{AE}. They also resolve the issue using compatible Hodge decompositions. Our characterization does not use this detour and works on domains with mixed boundary conditions.

The precise formulation of our main theorem reads then as follows.

\begin{theorem}
\label{Thm: main result}
	Let $O\subseteq \R^d$ be open, and $D \subseteq \bd O$ be closed. Assume that $(O,D)$ satisfies Assumption~\ref{Ass: N} and Assumption~\ref{Ass: D}. Then the system $L$ satisfies the following:
	\begin{enumerate}
		\item If $p_{-}(L) < r < p < 2$, then $L^{\frac{1}{2}}$ is a $p$-isomorphism and implicit constants depend on $p$, $r$, $\pLB{r}$, dimensions, geometry, and coefficient bounds. If $2 < q < r < \tilde q_{+}(L)$, then $L^{\frac{1}{2}}$ is a $q$-isomorphism, and implicit constants depend on $q$, $r$, $\qLB{r}$, dimensions, geometry, and coefficient bounds, \label{Item: main result 1}
		\item if $p\in (1,\infty)$ and $L^{\frac{1}{2}}$ is a $p$-isomorphism, then $p_{-}(L) \leq p \leq \tilde q_{+}(L)$, \label{Item: main result 2}
		\item if $p_{-}(L) > 1$, then $L^{\frac{1}{2}}$ is not a $p_{-}(L)$-isomorphism, and if $\tilde q_{+}(L) < \infty$, then $L^{\frac{1}{2}}$ is not a $\tilde q_{+}(L)$-isomorphism. \label{Item: main result 3}
	\end{enumerate}
\end{theorem}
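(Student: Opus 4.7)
The plan is to prove the three parts of Theorem~\ref{Thm: main result} in order, with (iii) deduced from a separate openness result.

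For part~(i), I treat the two ranges around $p=2$ by different methods. On $(p_-(L),2)$, the main tool is a Calder\'on--Zygmund decomposition for $\IW^{1,p}_D$-functions adapted to the geometry of $(O,D)$, built from a Whitney-type cover near $N$ (Assumption~\ref{Ass: N}) together with Hardy's inequality on the ADR set $D$ (Assumption~\ref{Ass: D}), so that the good part of a decomposed function remains in $\IW^{1,p}_D$. Given $r\in(p_-(L),p)$, I would estimate the good part of $\nabla L^{-\frac12}f$ using $\pLB{r}$-control on $\{\e^{-tL}\}_{t>0}$ together with off-diagonal $\L^r$--$\L^2$ decay, and the bad part using Poincar\'e on the CZ cubes. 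This yields a weak-type $(r,r)$ bound; Marcinkiewicz interpolation with the Kato $\L^2$-bound of~\cite{BEH} gives the strong $\L^p$-bound, and the reverse mapping $L^{-\frac12}:\L^p\to\IW^{1,p}_D$ follows by applying the same strategy to $L^*$ and dualising. On $(2,\tilde q_+(L))$, fixing $r\in\cJ(L)$ with $r>q$ supplies Meyers-type reverse H\"older bounds for $\nabla\cL^{-1}\mathrm{div}\,F$, which feed through the square function characterisation of $L^{\frac12}$ to upgrade the Kato $\L^2$-bound to an $\L^q$-bound, with the claimed dependence on~$\qLB{r}$.

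For part~(ii), suppose $L^{\frac12}$ is a $p$-iso. To see $p\leq\tilde q_+(L)$, I would factor $\cL^{-1}=L^{-\frac12}\circ L^{-\frac12}$. The hypothesis gives $L^{-\frac12}:\L^p\to\IW^{1,p}_D$ directly; to make the other copy $L^{-\frac12}:\IW^{-1,p}_D\to\L^p$ bounded I pass through the adjoint, combining a dual formulation of the $p$-iso property with extrapolation at the $\L^2$-endpoint. Composition then produces a bounded inverse of $\cL$ on $\IW^{-1,p}_D$, and uniqueness of the $\L^2$ holomorphic functional calculus identifies it with the Lax--Milgram inverse on the intersection, certifying $p\in\cJ(L)$. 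To see $p\geq p_-(L)$, I would derive uniform $\L^p$ resolvent bounds for $\lambda$ in a sector by rewriting $(\lambda+L)u=f$ as $L^{\frac12}u=L^{-\frac12}f-\lambda L^{-\frac12}u$ and exploiting $L^{-\frac12}:\L^p\to\IW^{1,p}_D\hookrightarrow\L^p$, then pass to semigroup bounds via the standard representation.

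Part~(iii) reduces to openness of the set of $p$-iso exponents, which I would establish by complex interpolation between any $p$-iso exponent and $p=2$, with quantitative control of operator norms through $\pLB{r}$ and $\qLB{r}$ from part~(i). Combined with~(ii), openness forces $p_-(L)$ and $\tilde q_+(L)$ themselves out of the iso set, since otherwise one would obtain iso exponents outside $[p_-(L),\tilde q_+(L)]$.

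The main obstacle I anticipate is the compatibility step in part~(ii): identifying the composed inverse of $\cL$ with the Lax--Milgram inverse on the $\L^2$-intersection. This is precisely the issue flagged in Remark~\ref{Rem: q tilde plus} and~\cite[p.~135]{AE}; here the mixed boundary condition and general open-set geometry obstruct the Hodge-decomposition workaround of~\cite{AE}, so compatibility must be tracked directly through uniqueness of the holomorphic functional calculus on $\L^2$.
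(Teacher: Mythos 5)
Your intuition about the key difficulties (compatibility, the Hardy inequality, the triangle relating $\cL$, $L^{\frac12}$, $(L^*)^{\frac12}$) is sound, but several concrete steps have gaps or errors.

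\textbf{Part~\ref{Item: main result 2}.} The factorization $\cL^{-1}=L^{-\frac12}\circ L^{-\frac12}$ does not type-check: $L^{-\frac12}$ maps $\L^2(O)^m\to\IW^{1,2}_D(O)$, not $\IW^{-1,2}_D(O)\to\L^2(O)^m$. The correct decomposition (Figure~\ref{Fig: Decomposition L}) is $\cL^{-1}=L^{-\frac12}\circ(L^{\frac12}\cL^{-1})$, and Lemma~\ref{Lem: cL inverse repr} identifies $L^{\frac12}\cL^{-1}$ with $((L^*)^{-\frac12})^*$. More seriously, duality of the $p$-iso hypothesis for $L^{\frac12}$ gives $(L^*)^{\frac12}\colon\L^{p'}\to\IW^{-1,p'}_D(O)$ bounded, not the needed $(L^*)^{\frac12}\colon\IW^{1,p'}_D(O)\to\L^{p'}(O)^m$ isomorphism; these are genuinely different statements and there is no formal duality step that converts one into the other. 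The paper bridges this by first proving Lemma~\ref{Lem: necessary condition} ($L^{\frac12}$ a $p_*$-iso $\Rightarrow p_+(L)\geq p$, via the hypercontractivity bootstrap $\e^{-t^2L}=L^{-\frac12}L^{\frac12}\e^{-t^2L}$), which yields $p_-(L^*)<p'<2$ by duality of the semigroup, and only then invokes the first case of part~\ref{Item: main result 1} for $L^*$ to get the $p'$-iso property for $(L^*)^{\frac12}$. Your sketch skips this step entirely. Separately, your resolvent argument for $p\geq p_-(L)$ doesn't close: from $\|L^{\frac12}u\|_p\lesssim\|f\|_p+|\lambda|\,\|u\|_p$ you cannot absorb the $|\lambda|\,\|u\|_p$ term uniformly on a sector.

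\textbf{Part~\ref{Item: main result 3}.} Complex interpolation between a $p$-iso exponent and $2$ only yields isomorphisms for exponents \emph{between} $p$ and $2$, so it cannot show the iso-set is open at $p$. The correct tool is \v{S}ne\u{\i}berg's theorem (Proposition~\ref{Prop: Sneiberg}). Even then, at the lower endpoint $p_-(L)$ one must verify that $L^{\frac12}$ is $q$-bounded on a two-sided neighbourhood of $p_-(L)$, i.e. for some $q<p_-(L)$. This is exactly why the weak-type estimate (Lemma~\ref{Lem: weak-type}, Corollary~\ref{Cor: square root upper bound}) is proved for $q>p_-(L)_*\vee1$, \emph{below} $p_-(L)$, exploiting the gain of one Sobolev exponent. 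Without this, \v{S}ne\u{\i}berg is not applicable at $p_-(L)$. At the upper endpoint the paper applies \v{S}ne\u{\i}berg to $\cL$ (which is automatically $q$-bounded for all $q$), not to $L^{\frac12}$.

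\textbf{Part~\ref{Item: main result 1}, $q>2$.} You propose a Meyers reverse H\"{o}lder argument fed through a square function characterisation. The paper instead uses the factorization of $\cL$ through the two square roots: Lemma~\ref{Lem: extrapolation square root} (if $\cL$ is a $q$-iso and $(L^*)^{\frac12}$ is a $q'$-iso, then $L^{\frac12}$ is a $q$-iso) combined with Lemma~\ref{Lem: iso gives bdd semigroup}, an iterated Sobolev-exponent bootstrap showing that $\cL$ an $r$-iso forces $p_+(L)\geq \tilde q_+(L)^*$, hence $p_-(L^*)<q'$ so the first case applies to $L^*$. Your Meyers route is not wrong in spirit, but in this unbounded, mixed-boundary, system setting it is neither obviously available nor clearly compatible across $\L^p/\L^2$, whereas the paper's purely operator-theoretic route avoids those issues. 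Also, in the $p<2$ case you describe a CZ decomposition applied to $\nabla L^{-\frac12}f$; in the paper the CZ decomposition is applied to estimate $L^{\frac12}$ acting on a Sobolev function, while the companion bound $\nabla L^{-\frac12}\colon\L^p\to\L^p$ comes from Blunck--Kunstmann extrapolation of off-diagonal estimates (Proposition~\ref{Prop: Riesz}), not from a CZ argument.
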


\begin{remark}
	\label{Rem: critical numbers}
	The number $p_{-}(L)$ is at most $2_*$, see~\cite[Thm.~1.6]{E}, and the number $\tilde q_{+}(L)$ is at least $2$ by the Lax--Milgram lemma. If $d \geq 3$, $p_{-}(L)$ can be improved to $2_* - \eps$, see again~\cite[Thm.~1.6]{E}, and $\tilde q_{+}(L)$ can be improved to $\tilde q_{+}(L) + \eps$ in any dimension, see Proposition~\ref{Prop: Sneiberg}.
\end{remark}

In the next section, we will introduce all necessary definitions and make precise our geometric assumptions. Section~\ref{Sec: ODE and extrapolation} provides preliminary results, including some $p$-bounds for $L^{-\frac{1}{2}}$ and the $\H^\infty$ calculus, most of them taken from~\cite{E} without proof. Therefore, we advise the reader to keep a copy of that article handy.
In Section~\ref{Sec: p small survey}, we review results on the $p$-bound for $L^\frac{1}{2}$ when $p<2$. They rely on a weak-type argument which is based on a Calder\'{o}n--Zygmund decomposition for Sobolev functions. Its proof relies on a new Hardy's inequality related to mixed boundary conditions presented in Section~\ref{Subsec: hardy}.
Finally, we prove Theorem~\ref{Thm: main result} in the Sections~\ref{Sec: proof main result}-\ref{Sec: Endpoint cases}. The respective sections correspond to items~\ref{Item: main result 1}-\ref{Item: main result 3} in the theorem.

\subsection*{Notation}

Write $\diam( \cdot )$ for the diameter of a set and $\dist(\cdot, \cdot)$ for the distance between two sets. We employ the shorthand notation $\dist_E(x) \coloneqq \dist(E, \{ x \})$. The $(d-1)$-dimensional Hausdorff measure is denoted by $\HM^{d-1}$. If $\varphi \in (0,\pi)$, then write $\Sec_\varphi$ for the (open) sector $\{ z \in \C \setminus \{ 0 \} \colon |\arg(z)| < \varphi \}$, and put $\Sec_0 \coloneqq (0,\infty)$. Also, write $\overline{\Sec}_\varphi \coloneqq \overline{\Sec_\varphi}$ for the closed sector. For $\Xi \subseteq \C$ open, denote the space of bounded and holomorphic functions on $\Xi$ by $\H^\infty(\Xi)$ and equip it with the supremum norm. Inductively, we introduce the shorthand notation $2^{[0]} \coloneqq 2$ and $2^{[j+1]} \coloneqq (2^{[j]})^*$ for iterated Sobolev exponents.

\section{Elliptic systems \& function spaces}
\label{Sec: Systems and function spaces}

We properly introduce the $(m\times m)$-elliptic system $L$ on $O\subseteq \R^d$, $d\geq 2$, from the introduction. To this end, we also need to define Sobolev spaces subject to mixed boundary conditions, and we discuss geometric properties of the pair $(O,D)$.

\subsection{Geometry}
\label{Sec: Geometry}

Let $O \subseteq \R^d$ be open and $D\subseteq \bd O$ be closed. Fix the pair $(O,D)$ for the rest of this article. In particular, implicit constants might depend on this choice of geometry, as well as on dimensions. Introduce the following set of geometric assumptions.

\begin{assumption}{N}
\label{Ass: N}
There are $\eps \in (0,1]$ and $\delta \in (0,\infty]$ such that with $N_\delta  \coloneqq \{z \in \R^d : \dist(z,N) < \delta \}$ one has the following properties.

\begin{enumerate}
	\item All points  $x,y \in O \cap N_\delta$ with $|x-y| < \delta$ can be joined in $O$ by an \emph{$\eps$-cigar with respect to $\bd O \cap N_\delta$}, that is to say, a rectifiable curve $\gamma \subseteq O$ of length $\ell(\gamma) \leq \eps^{-1}|x-y|$ such that
	\begin{align}
	\label{eq: eps-delta}
	\dist(z, \bd O \cap N_\delta) \geq \frac{\eps|z-x|\, |z-y|}{|x-y|} \qquad \mathrlap{(z \in \gamma).}
	\end{align}

	\item $O$ has \emph{positive radius near $N$}, that is, there exists $\lambda > 0$ such that all connected components $O'$ of $O$ with $\bd O' \cap N \neq \emptyset$ satisfy $\diam(O') \geq \lambda \delta$.
\end{enumerate}
\end{assumption}

\begin{assumption}{D}
\label{Ass: D}
There are constants $C,c>0$ such that
\begin{align}
\label{Eq: d-1}
	\forall x\in D, r\in (0,\diam(D)]\colon \quad c r^{d-1} \leq \HM^{d-1}(\B(x,r)\cap D) \leq C r^{d-1}.
\end{align}
\end{assumption}

\begin{assumption}{D'}
\label{Ass: D'}
There are constants $C,c>0$ such that
\begin{align}
\label{Eq: d-1}
	\forall x\in D, r\in (0,1]\colon \quad c r^{d-1} \leq \HM^{d-1}(\B(x,r)\cap D) \leq C r^{d-1}.
\end{align}
\end{assumption}

Assumption~\ref{Ass: N} means that $O$ is \emph{locally uniform near~$N$}, see~\cite[Sec.~2.1]{BEH} for further information and a comparison with other geometric frameworks, including that used in~\cite{ABHR,E}. Assumption~\ref{Ass: D} means that $D$ is \emph{Ahlfors--David regular} and Assumption~\ref{Ass: D'} that $D$ is $(d-1)$-regular. If $D$ is bounded, then Assumption~\ref{Ass: D} and Assumption~\ref{Ass: D'} are equivalent, and in the unbounded case, Assumption~\ref{Ass: D} implies Assumption~\ref{Ass: D'}, but the converse might be false.

Throughout, assume that Assumption~\ref{Ass: N} is in place.

\subsection{Sobolev spaces}
\label{Subsec: Sobolev spaces}

Let $p\in [1,\infty)$. Write $\W^{1,p}(O)$ for the usual first-order Sobolev space of $\L^p$-functions on $O$ whose distributional gradient is in $\L^p(O)^d$. Also, introduce the set of test functions
\begin{align}
\label{Eq: C_D^infty}
	\Cont_D^\infty(O) \coloneqq \{ u|_O \colon u\in \Cont_0^\infty(\R^d) \quad \& \quad \dist(\supp(u), D) > 0 \},
\end{align}
where $\Cont_0^\infty(\R^d)$ denotes the set of smooth and compactly supported functions on $\R^d$.
Then the closed subspace $\W^{1,p}_D(O)$ of $\W^{1,p}(O)$ corresponding to \emph{mixed boundary conditions on $D$} is given as the closure of $\Cont_D^\infty(O)$ in $\W^{1,p}(O)$.
See~\cite[Sec.~2.2]{BHT} for an alternative set of test functions that leads to the same closure.
Moreover, define the space $\W^{-1,p}_D(O)$ by $\W^{1,p'}_D(O)^*$. Here, $X^*$ denotes the space of antilinear functionals on $X$, and the $\W^{-1,p}_D(O)$--$\W^{1,p'}_D(O)$ duality extends the $\L^2(O)$-duality. Replacing $\W$ by $\IW$, we extend all definitions to $m$-fold product spaces as before. All definitions can be extended to the more general case that $\Xi\subseteq \R^d$ is open and $E\subseteq \cl{\Xi}$; Then we obtain for instance $\W^{1,p}_E(\Xi)$.

Often, we will need that the space $\W^{1,p}_D(O)$ has the following (inhomogeneous) \emph{extension property}. This is the main result in~\cite{BHT}.
\begin{proposition}
\label{Prop: extension}
	Suppose Assumption~\ref{Ass: N}. Then there exists a linear extension operator $\Ext$ on $\L^1_\mathrm{loc}(O)$ that restricts for any $p\in (1,\infty)$ to a bounded operator $\Ext: \W^{1,p}_D(O) \to \W^{1,p}_D(\R^d)$.
\end{proposition}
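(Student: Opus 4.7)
The plan is to adapt the classical Jones $(\eps,\delta)$-extension construction to the local situation near $N$ while handling the Dirichlet part $D$ by (essentially) zero extension. Let me note at the outset that the hypothesis on $u \in \W^{1,p}_D(O)$ can be leveraged through approximation by the dense subclass $\Cont_D^\infty(O)$, whose members vanish in a neighbourhood of $D$.

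First I would carry out a Whitney-type decomposition of the set $\R^d \setminus \cl{O}$ in the $\delta/2$-tubular neighbourhood $N_{\delta/2}$. For each Whitney cube $Q$ in this outer region, Assumption~\ref{Ass: N} produces a companion cube $Q^* \subseteq O$ of comparable sidelength and comparable distance from $Q$: indeed, one picks a point $x \in Q$ close to $\bd O \cap N_\delta$ and, using the $\eps$-cigar condition applied to a nearby interior point $y \in O \cap N_\delta$, traces a rectifiable curve into $O$ along which the distance to the boundary grows linearly, thereby locating $Q^*$ at controlled scale. The positive-radius clause of Assumption~\ref{Ass: N} is used to rule out degenerate components of $O$ where this reflection scheme would fail. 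Away from $N$, i.e.\ in $\R^d \setminus N_{\delta/2}$ (where one is close to $D$ or far from $\bd O$), I would simply extend by zero, using that $D$ is closed and hence separated from the support of a generic test function in $\Cont_D^\infty(O)$.

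The extension operator itself is then
\begin{align}
	\Ext u(x) = \sum_{Q} \varphi_Q(x)\, P_{Q^*}u(x), \qquad x \in \R^d \setminus \cl{O},
\end{align}
where $\{\varphi_Q\}$ is a smooth partition of unity subordinate to a dilation of the Whitney cubes in $N_{\delta/2} \setminus \cl{O}$, and $P_{Q^*}u$ denotes an affine (or more generally, first-order polynomial) approximation of $u$ computed as averages over $Q^*$; outside this region $\Ext u = 0$. Setting $\Ext u = u$ on $O$, one verifies $\W^{1,p}$-boundedness in the Jones manner: on each outer Whitney cube $Q$, $|\nabla \Ext u|$ is controlled by the local $\L^p$-norm of $\nabla u$ over a bounded number of companion cubes $Q^*$ (using Poincar\'{e}'s inequality), and summing over $Q$ uses bounded overlap of the companions, an ingredient that requires the geometric cigar condition in a quantitative way.

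The main obstacle I expect is the global matching along the interface where the Jones patch (near $N$) meets the zero region (near $D$), together with verifying that $\Ext u$ actually lies in $\W^{1,p}_D(\R^d)$ rather than merely $\W^{1,p}(\R^d)$. The first issue is handled by choosing cutoff scales of order $\delta$ carefully so that $\Ext u$ has no jump across the transition; concretely, for $u \in \Cont_D^\infty(O)$, the reflected polynomial $P_{Q^*}u$ is already zero on cubes whose companions are far from $\supp u$, giving continuity of $\Ext u$ at the $D$-side boundary of the Whitney construction. For membership in $\W^{1,p}_D(\R^d)$, one first proves the estimate $\|\Ext u\|_{\W^{1,p}(\R^d)} \lesssim \|u\|_{\W^{1,p}(O)}$ on $\Cont_D^\infty(O)$ and observes that $\Ext$ sends $\Cont_D^\infty(O)$ into functions supported away from $D$; extending by density along $\W^{1,p}_D(O)$ then lands in the $\W^{1,p}(\R^d)$-closure of functions vanishing near $D$, which is precisely $\W^{1,p}_D(\R^d)$. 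The independence of the construction from $p$ is automatic since the operator is defined at the level of locally integrable functions, so the same $\Ext$ works simultaneously for all $p \in (1,\infty)$.
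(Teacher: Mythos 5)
The paper does not prove this proposition; it cites it as the main result of~\cite{BHT}, so there is no in-text proof to compare against. Your proposal does, however, correctly identify the architecture of that argument: a Jones-type $(\eps,\delta)$-reflection in the tubular neighbourhood $N_\delta$ of $N$, glued to zero extension across $D$, with the $\W^{1,p}_D$-conclusion obtained by density of $\Cont_D^\infty(O)$ and $p$-independence coming from the fact that the operator is defined at the $\L^1_{\mathrm{loc}}$ level.

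That said, the part you flag as \enquote{the main obstacle} is indeed where the proposal remains too thin to count as a proof, and your resolution of it is not quite right as stated. The interface between the Jones patch and the zero region is the boundary of $N_{\delta/2}$ intersected with $\R^d\setminus\cl{O}$; this is a fixed geometric set that has nothing to do with $\supp u$ or with $D$. A point $x$ there with $\dist(x,\bd O)\approx\delta/2$ lies in a Whitney cube of sidelength comparable to $\delta$, and at such scales the companion cube $Q^*$ is well inside $O$ and $P_{Q^*}u$ has no reason to vanish. Your stated argument (\enquote{$P_{Q^*}u$ is already zero on cubes whose companions are far from $\supp u$}) only addresses the sub-region of the interface close to $D$, not the bulk of it. The correct mechanism is different: one must truncate the Jones sum to Whitney cubes of sidelength at most $c\,\eps\delta$ so that the partition of unity $\sum_Q\varphi_Q$ itself decays smoothly from $1$ (near $\bd O\cap N_\delta$) to $0$ well inside $N_{\delta/2}$, and this truncation scale must be chosen compatibly with the cigar constant $\eps$ and the tube width $\delta$. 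Without fixing this scale explicitly and checking that the resulting \enquote{large-cube} error is controlled, the construction is not closed. A second point that deserves more than an aside is the claim that $\Ext$ maps $\Cont_D^\infty(O)$ into functions vanishing near $D$: this holds because a Whitney cube $Q$ at distance $\lesssim\ell(Q)$ from $D$ has its companion $Q^*$ at distance $\lesssim\ell(Q)$ from $D$ as well, so if $\ell(Q)$ is small compared to $\dist(\supp u,D)$ then $P_{Q^*}u=0$; one then needs a mollification plus compact cutoff to land in $\Cont_D^\infty(\R^d)$ and conclude via the closure. These are fillable gaps, but they are exactly the parts that are specific to the mixed-boundary, locally-uniform-near-$N$ setting, so leaving them at the level of a heuristic means the argument does not yet establish the proposition.
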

Here, extension operator refers to the property that $(\Ext u)|_O = u$ for any $u\in \L^1_\mathrm{loc}(O)$. A consequence of Proposition~\ref{Prop: extension} is that classical inequalities like the Sobolev embedding $\W^{1,p}(\R^d) \subseteq \L^{p^*}(\R^d)$ valid for $p<d$ translate to $\W^{1,p}_D(O)$. Here, $p^* \coloneqq \frac{pd}{d-p}$ is the (upper) Sobolev exponent to $p$ if $p<d$, otherwise put $p^* = \infty$. Similarly, define the lower Sobolev exponent by $p_* \coloneqq \frac{pd}{d+p}$.

\subsection{The elliptic system}
\label{Subsec: elliptic system}

We give a precise definition for the elliptic system $L$ from the introduction. Consider the coefficients $a_{ij}, b_i, c_j, d \colon O \to \cL(\C^m)$. Here, $i$ and $j$ refer to row and column notation and $m\geq 1$ is the size of the system. Put $A=(a_{ij})_{ij}$, $b=(b_i)_i$, $c=(c_j)_j$. We assume the upper bound
\begin{align}
	\left\| \begin{bmatrix} d & c \\ b & A \end{bmatrix} \right\|_{\L^\infty(O;\cL(\C^{dm+m}))} \leq \Lambda
\end{align}
for the coefficients. Using the spaces from Section~\ref{Subsec: Sobolev spaces}, define the sesquilinear form
\begin{align}
	a \colon \IW^{1,2}_D(O) \times \IW^{1,2}_D(O) \to \C, \quad a(u,v) = \int_O \; \begin{bmatrix} d & c \\ b & A \end{bmatrix} \begin{bmatrix} u \\ \nabla u \end{bmatrix}  \cdot \cl{\begin{bmatrix} v \\ \nabla v \end{bmatrix}} \d x.
\end{align}
To ensure ellipticity of $a$, assume for some $\lambda > 0$ the (inhomogeneous) \emph{G\rr{a}rding inequality}
\begin{align}
\label{label-Garding}
\Re a(u,u) \geq  \lambda (\|u\|_2^2 + \|\nabla u\|_2^2) \qquad \mathrlap{(u \in \IW^{1,2}_D(O)).}
\end{align}
Associate with $a$ the operator
\begin{align}
\label{Def: cL}
	\cL \colon \IW^{1,2}_D(O) \to \IW^{-1,2}_D(O), \qquad \langle \cL u, v \rangle = a(u,v).
\end{align}
In virtue of~\eqref{label-Garding} and the Lax--Milgram lemma, $\cL$ is invertible. Define $L$ to be the maximal restriction of $\cL$ in $\L^2(O)^m$ in virtue of the inclusion $\L^2(O)^m \subseteq \IW^{-1,2}_D(O)$. Clearly, $L$ is again invertible. By ellipticity it follows that $L$ is densely defined. Moreover, for some $\omega'\in [0,\pi/2)$ that depends on coefficient bounds, the numerical range $\Theta(L)$ is contained in the closed sector $\cl{\Sec}_{\omega'}$. This is a consequence of $\Theta(L) \subseteq \Theta(a)$ together with ellipticity of $a$. Define $\omega$ as the infimum over all such $\omega'$. In particular, $L$ is sectorial and maximal accretive, hence generates a contraction semigroup on $\L^2(O)^m$.

We will freely use the sectorial functional calculus of $L$ and assume that the reader is familiar with this concept. The reader can consult the monograph~\cite{Haase} for further information on this topic.

The following theorem is the main result from~\cite{BEH} and establishes Kato's square root property for $L$.

\begin{theorem}[Kato's square root property]
\label{Thm: Kato}
	Assume that the pair $(O,D)$ satisfies Assumption~\ref{Ass: N} and Assumption~\ref{Ass: D}. Then $\dom(L^{\frac{1}{2}})=\IW^{1,2}_D(O)$ holds with equivalence of norms
	\begin{align}
	\label{Eq: Kato estimate}
		\| u \|_2 + \| \nabla u \|_2 \approx \| L^{\frac{1}{2}} u \|_2 \quad (u\in \IW^{1,2}_D(O)),
	\end{align}
	where the implicit constants depend only on geometry, dimensions, $\lambda$, and $\Lambda$.
\end{theorem}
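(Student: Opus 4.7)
The plan is to follow the first-order, Dirac-type framework of Axelsson--Keith--McIntosh, in the mixed-boundary variant used in~\cite{Darmstadt-KatoMixedBoundary}. One introduces the auxiliary Hilbert space $\mathcal{H}\coloneqq\L^2(O)^m\oplus\L^2(O)^{dm}$ together with a perturbed Dirac operator $\Pi_B$ whose ``gradient'' block has domain $\IW^{1,2}_D(O)^m$ and whose ``divergence'' block encodes both the natural boundary condition on $N$ and the coefficient block $\left[\begin{smallmatrix} d & c \\ b & A\end{smallmatrix}\right]$. Since $\Pi_B^{2}$ realises a variant of $L$ on the upper component of the Hodge-type splitting induced by $\Pi$, the Kato equivalence~\eqref{Eq: Kato estimate} reduces in a standard way to the quadratic estimate
\begin{align}
\int_0^\infty \bigl\| t\, \Pi_B (1+t^{2}\Pi_B^{2})^{-1} u \bigr\|_2^{2} \, \frac{\d t}{t} \lesssim \|u\|_2^{2} \qquad (u\in\mathcal{H}),
\end{align}
whose accretivity and bisectoriality ingredients come from the G\rr{a}rding inequality~\eqref{label-Garding}.

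The next step is the now-classical reduction to a Carleson measure estimate on $O\times(0,\infty)$. Polynomial off-diagonal decay $\|\ind_E(1+it\Pi_B)^{-1}\ind_F\|_{2\to 2}\lesssim (1+\dist(E,F)/|t|)^{-N}$ follows from Caccioppoli-type arguments performed with cutoffs supported away from $D$; the Poincar\'{e} estimates needed on annular regions touching $N$ are supplied by the locally uniform structure of Assumption~\ref{Ass: N}, and the extension operator of Proposition~\ref{Prop: extension} transports Whitney-type machinery from $\R^d$ to $O$. A principal-part approximation then recasts the full quadratic estimate as a Carleson bound for $\d\mu(x,t)=|\gamma_t(x)|^{2}\frac{\d x\,\d t}{t}$, where $\gamma_t(x)$ is the action of $t\Pi_B(1+t^{2}\Pi_B^{2})^{-1}$ tested against locally constant vector fields.

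The crux, and the genuinely new content of~\cite{BEH} for this weak geometry, is the construction on each dyadic pseudocube $Q\subseteq O$ of a test field $b_Q\in\IW^{1,2}_D(O)^m$ that is non-degenerate on average over $Q$ and enjoys $\L^{2}$-Carleson control on $(1-(1+i\eps_Q L)^{-1}) b_Q$. On $\R^d$ one would take $b_Q=(1+i\eps_Q L)^{-1}(\chi_{2Q} w)$ for a constant vector $w$; here the candidate must additionally respect the Dirichlet condition on $D$. This is where Assumption~\ref{Ass: D} enters decisively through a Hardy inequality relative to $D$: Ahlfors--David regularity implies that multiplication by $\dist(\cdot,D)^{-1}$ maps $\IW^{1,2}_D(O)$ boundedly into $\L^{2}(O)^m$, so that Lipschitz cutoffs up to $D$ yield admissible test functions with controlled gradient. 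Feeding the resulting family $\{b_Q\}$ into a John--Nirenberg-type stopping-time argument then produces the Carleson estimate. The hardest part is the uniform simultaneous treatment of pseudocubes touching $D$, meeting $N$, and lying in the interior, within a single stopping-time scheme that remains compatible with the extension operator of Proposition~\ref{Prop: extension}.
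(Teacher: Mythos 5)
The paper does not prove Theorem~\ref{Thm: Kato} at all: it is imported verbatim as ``the main result from~\cite{BEH}'' and used as a black box throughout (see the sentence immediately preceding the statement). So there is no proof in this article to compare against. What you have written is a reasonable high-level summary of the Axelsson--Keith--McIntosh first-order Dirac strategy that~\cite{BEH} (following~\cite{Darmstadt-KatoMixedBoundary}) does pursue: encode $L$ in $\Pi_B^2$, reduce to a quadratic estimate for the bisectorial $\Pi_B$, pass to a Carleson bound via polynomial off-diagonal decay of the resolvent and a principal-part approximation, and close with a test-function / stopping-time scheme. That is the correct skeleton.

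Two structural points are off, and both matter. First, since $L$ carries the full coefficient block $\left[\begin{smallmatrix} d & c \\ b & A \end{smallmatrix}\right]$, the ``gradient'' block of $\Gamma$ must be $u \mapsto (u, \nabla u)$, so the ambient Hilbert space is $\L^2(O)^m \oplus \L^2(O)^m \oplus \L^2(O)^{dm}$, one copy of $\L^2(O)^m$ more than you wrote; with $\mathcal{H}=\L^2(O)^m\oplus\L^2(O)^{dm}$ there is no slot for the zeroth-order coefficients to act and $\Pi_B^2$ does not recover $L$. Second, and more seriously, the stopping-time argument cannot be run on ``dyadic pseudocubes $Q\subseteq O$'': the paper stresses (Remark after Definition~\ref{Def: IW_D}, and again before Corollary~\ref{Cor: square root upper bound}) that $O$ is \emph{not} assumed to be a doubling metric measure space, so Christ-type dyadic structure on $O$ is unavailable. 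Circumventing this --- by running the dyadic machinery on $\R^d$ through the extension operator of Proposition~\ref{Prop: extension} and exploiting the $(d-1)$-regularity of $D$ --- is precisely the core difficulty~\cite{BEH} had to resolve, and your sketch glides past it. Finally, be careful not to conflate the Hardy inequality you invoke with Theorem~\ref{Thm: Hardy inequality} of this paper: the latter is proved here for the $\L^p$-Calder\'on--Zygmund decomposition of Section~\ref{Subsec: CZ}, not for the $\L^2$ quadratic estimate, which relies on Poincar\'e-type input established separately in~\cite{BEH}.
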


\subsection{Decomposition of $\cL^{-1}$}
\label{Subsec: Decomposition cL}

We use a representation formula for $\cL^{-1}$. A similar formula, but for $\cL$ itself, was also used in~\cite{E}. In fact, the lemma below follows from the result in~\cite{E} and Theorem~\ref{Thm: Kato}. For convenience of the reader, we include the short and direct proof. The reason why we need a formula for $\cL^{-1}$ is that in unbounded sets the question of compatibility becomes a non-trivial task.

\begin{lemma}
\label{Lem: cL inverse repr}
	One has the identity
	\begin{align}
		(L^\frac{1}{2} \cL^{-1} u \SP f)_2 = \langle u, (L^*)^{-\frac{1}{2}} f \rangle \qquad (u \in \IW^{-1,2}_D(O), f\in \L^2(O)^m).
	\end{align}
\end{lemma}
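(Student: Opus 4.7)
The plan is to reduce the identity to an equality between an $\L^2$ inner product and the form $a$, then verify it on the dense subspace $\dom(L)$. First I would set $v \coloneqq \cL^{-1} u \in \IW^{1,2}_D(O)$ and, invoking Theorem~\ref{Thm: Kato} for $L^*$---which is associated with the sesquilinear form $a^*(\phi,\psi) \coloneqq \overline{a(\psi,\phi)}$ and inherits all structural assumptions of $a$---write $g \coloneqq (L^*)^{-\frac{1}{2}} f \in \IW^{1,2}_D(O)$, so that $(L^*)^{\frac{1}{2}} g = f$. Because $\langle u, (L^*)^{-\frac{1}{2}} f \rangle = \langle \cL v, g \rangle = a(v, g)$, the stated identity becomes equivalent to
\begin{align}
(L^{\frac{1}{2}} v \SP f)_2 = a(v, g).
\end{align}

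For $v \in \dom(L)$ this is direct. Since $\cL v = Lv$ as elements of $\L^2(O)^m$, the duality collapses to $a(v, g) = (Lv \SP g)_2$. Writing $Lv = L^{\frac{1}{2}}(L^{\frac{1}{2}} v)$ and using the standard Hilbert-space sectorial-calculus identity $(L^{\frac{1}{2}})^* = (L^*)^{\frac{1}{2}}$ one gets
\begin{align}
(Lv \SP g)_2 = (L^{\frac{1}{2}} v \SP (L^*)^{\frac{1}{2}} g)_2 = (L^{\frac{1}{2}} v \SP f)_2,
\end{align}
as desired. To extend from $v \in \dom(L)$ to $v \in \IW^{1,2}_D(O)$, I would use that $\dom(L)$ is a core for $L^{\frac{1}{2}}$, hence dense in $\dom(L^{\frac{1}{2}}) = \IW^{1,2}_D(O)$ for the graph norm of $L^{\frac{1}{2}}$, and that this graph norm is equivalent to the ambient $\IW^{1,2}$-norm by Theorem~\ref{Thm: Kato}. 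Both sides are continuous in $v$ with respect to the $\IW^{1,2}$-norm: the left via the Kato isomorphism and Cauchy--Schwarz, the right via boundedness of $a$ from the coefficient bounds. Since $\cL^{-1}$ is a bijection from $\IW^{-1,2}_D(O)$ onto $\IW^{1,2}_D(O)$, this yields the identity for all $u \in \IW^{-1,2}_D(O)$.

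The only subtle point, more than a true obstacle, is appealing to Theorem~\ref{Thm: Kato} for $L^*$: this is legitimate because $a^*$ has coefficients $A^*, c^*, b^*, d^*$ with the same upper bound $\Lambda$, it still satisfies the G\rr{a}rding inequality~\eqref{label-Garding} with the same $\lambda$, and Assumption~\ref{Ass: N} and Assumption~\ref{Ass: D} only concern the geometric pair $(O, D)$ and are thus automatically inherited.
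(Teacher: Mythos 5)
Your proposal is correct and follows essentially the same route as the paper: both verify the identity on a dense subset and extend by continuity using Theorem~\ref{Thm: Kato} and boundedness of $a$, relying on the adjoint identity $(L^{\frac{1}{2}})^* = (L^*)^{\frac{1}{2}}$. The only cosmetic difference is that the paper approximates $u$ by elements of $\L^2(O)^m$ while you approximate $v = \cL^{-1}u$ by elements of $\dom(L)$, which is the same thing since $\cL^{-1}$ maps $\L^2(O)^m$ bijectively onto $\dom(L)$.
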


\begin{proof}
	First, let $u\in \L^2(O)^m$ and $v \in \IW^{1,2}_D(O)$. Then $\cL^{-1} u \in \dom(L)$, hence, taking Theorem~\ref{Thm: Kato} into account, deduce
	\begin{align}
		(u \SP v)_2 = a(\cL^{-1} u, v) = (L \cL^{-1} u \SP v) = (L^\frac{1}{2} \cL^{-1} u \SP (L^*)^\frac{1}{2} v).
	\end{align}
	We specialize to $v \coloneqq (L^*)^{-\frac{1}{2}} f$ to conclude
	\begin{align}
		\langle u, (L^*)^{-\frac{1}{2}} f \rangle = (u \SP (L^*)^{-\frac{1}{2}} f)_2 = (L^\frac{1}{2} \cL^{-1} u \SP f).
	\end{align}
	Owing to Theorem~\ref{Thm: Kato} (applied with $L$ and $L^*$) and the Lax--Milgram lemma, both sides are continuous in $u$ with respect to the $\IW^{-1,2}_D(O)$ topology. Hence, the claim follows by density.
\end{proof}

\section{Review on off-diagonal estimates and $\L^p$ extrapolation}
\label{Sec: ODE and extrapolation}

\subsection{Off-diagonal estimates}
\label{Subsec: ODE}

We review decay properties in $\L^p$ of operator families related to $L$. Definition~\ref{Def: boundedness and ODE} also clarifies the notion that the family $\{ \e^{-tL} \}_{t>0}$ is $\L^p$-bounded, which was used in the definition of $p_{-}(L)$ in the introduction. The results obtained in this section will be used frequently in the course of this article.

\begin{definition}
\label{Def: boundedness and ODE}
	Let $\Xi \subseteq \R^d$ be measurable, $m_1,m_2$ natural numbers and let $U\subseteq \C \setminus \{0\}$ and $\cT = \{ T(z) \}_{z\in U}$ be a family of bounded operators $\L^2(\Xi)^{m_1} \to \L^2(\Xi)^{m_2}$. Given $1\leq p \leq q \leq \infty$, say that $\cT$ is \emph{$\L^p\to \L^q$ bounded} if there exists a constant $C>0$ such that for all $u\in \L^p(\Xi)^{m_1}\cap \L^2(\Xi)^{m_1}$ and $z\in U$ one has
	\begin{align}
	\label{Eq: Def Lp Lq boundedness}
		\| T(z)u \|_{\L^q(\Xi)^{m_2}} \leq C |z|^{-\frac{d}{2}\bigl(\frac{1}{p}-\frac{1}{q}\bigr)} \| u \|_{\L^p(\Xi)^{m_1}}.
	\end{align}
	If in addition there is $c\in (0,\infty)$ such that, whenever $E,F\subseteq \Xi$ and $\supp(u) \subseteq E$, the more restrictive estimate
	\begin{align}
	\label{Eq: Def Lp Lq off-diagonal}
		\| T(z)u \|_{\L^q(F)^{m_2}} \leq C |z|^{-\frac{d}{2}\bigl(\frac{1}{p}-\frac{1}{q}\bigr)} \e^{-c \frac{\dist(E,F)^2}{|z|}} \| u \|_{\L^p(E)^{m_1}}
	\end{align}
	holds, then say that $\cT$ satisfies \emph{$\L^p\to \L^q$ off-diagonal estimates}. Finally, if $p=q$ in the above situations, we simply talk about \emph{$\L^p$-boundedness} and \emph{$\L^p$ off-diagonal estimates}.
\end{definition}

If $\cT = \{ \e^{-tL} \}_{t>0}$ and $\cT$ is $\L^p \to \L^q$ bounded for \emph{some} values $1\leq p < q\leq \infty$, then we say that the semigroup is \emph{hypercontractive}.

Using Davies' trick one can show $\L^2$ off-diagonal estimates without \emph{any} geometrical requirements\footnote{In fact, using test functions as in~\eqref{Eq: C_D^infty} that are restrictions from $\R^d$ constitutes a form of \enquote{geometry}. To see that one can use test function classes that only use information on $O$ one has to impose restrictions on the geometry, see the discussion in Section~\ref{Subsec: Sobolev spaces}.}. The general argument is well-known,
see~\cite{Memoirs} for a version on $\R^d$ and~\cite{E} for a version on open sets. The following formulation follows with the proof presented in~\cite{E} when using Young's inequality also for the term of order zero (this also eliminates the dependence of the implicit constants on $\diam(O)$ that appeared in~\cite{E}) and when replacing the function $\varphi$ appearing in the proof by functions $\varphi_n \coloneqq \dist_E(x) \wedge n$ and taking the limit $n\to \infty$ in the end.

\begin{proposition}
\label{Prop: Gaffney estimates}
	For $\psi\in [0, \nicefrac{\pi}{2}-\omega)$, the operator families $\{ \e^{-zL} \}_{z\in \Sec_\psi}$, $\{ z \nabla \e^{-z^2 L} \}_{z\in \Sec_\psi}$, and $\{ z L \e^{-zL} \}_{z\in \Sec_\psi}$ satisfy $\L^2$ off-diagonal estimates, and the implied constants depend on $L$ only via its coefficient bounds.
\end{proposition}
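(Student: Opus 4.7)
The plan is to prove all three off-diagonal estimates via Davies' exponential perturbation trick. For a measurable set $E\subseteq O$ and parameters $\rho>0$, $n\in\N$, introduce the bounded $1$-Lipschitz weight $\varphi_n\coloneqq \dist_E(\cdot)\wedge n$. Since $\varphi_n$ is bounded, multiplication by $\e^{\pm\rho\varphi_n}$ preserves the test-function class $\Cont_D^\infty(O)$ and is bounded on $\W^{1,2}(O)$, so it extends to a bounded isomorphism of $\IW^{1,2}_D(O)$. The twisted sesquilinear form
\begin{align}
a_{\rho,n}(u,v)\coloneqq a(\e^{-\rho\varphi_n}u,\,\e^{\rho\varphi_n}v)
\end{align}
is then bounded on $\IW^{1,2}_D(O)\times\IW^{1,2}_D(O)$ and is associated with the conjugated operator $L_{\rho,n}\coloneqq \e^{\rho\varphi_n}L\e^{-\rho\varphi_n}$, whose holomorphic semigroup satisfies the intertwining identity $\e^{-zL_{\rho,n}}=\e^{\rho\varphi_n}\e^{-zL}\e^{-\rho\varphi_n}$.

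The main technical step is a perturbed G\rr{a}rding inequality of the form
\begin{align}
\Re a_{\rho,n}(u,u)\geq \tfrac{\lambda}{2}\bigl(\|u\|_2^2+\|\nabla u\|_2^2\bigr) - K\rho^2\|u\|_2^2,
\end{align}
with $K$ depending only on the coefficient bounds and, crucially, \emph{not} on $n$ or $\diam(O)$. Expanding the definition, the difference $a_{\rho,n}-a$ produces first-order error terms containing one factor $\rho\nabla\varphi_n$ and zeroth-order error terms of order $\rho^2|\nabla\varphi_n|^2$; since $\|\nabla\varphi_n\|_\infty\leq 1$ uniformly in $n$, these can all be estimated by $\rho\|\nabla u\|_2\|u\|_2$ and $\rho^2\|u\|_2^2$. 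Young's inequality absorbs the $\rho$-terms into the $\lambda\|\nabla u\|_2^2$ contribution from~\eqref{label-Garding} at the price of an additional $\rho^2\|u\|_2^2$. Applying Young's inequality also to the zeroth-order portion of the original form~\eqref{a} (the modification indicated in the text preceding the statement) avoids having to invoke a Poincar\'e- or Sobolev-type inequality with a $\diam(O)$-dependent constant.

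From this G\rr{a}rding estimate one concludes that $L_{\rho,n}+K\rho^2$ is maximal accretive with numerical range in a closed sector of opening $\omega'<\pi/2$ bounded only by coefficient bounds; consequently $\|\e^{-zL_{\rho,n}}\|_{\L^2\to\L^2}\leq C\,\e^{K\rho^2|z|}$ for $z\in\Sec_\psi$ uniformly in $n,\rho$. For $u$ with $\supp u\subseteq E$ one has $\e^{\rho\varphi_n}u=u$ on the support, while $\e^{-\rho\varphi_n(x)}\leq \e^{-\rho\dist(E,F)}$ for $x\in F$ provided $n\geq\dist(E,F)$; the intertwining identity then yields
\begin{align}
\|\e^{-zL}u\|_{\L^2(F)^m}\leq C\,\e^{-\rho\dist(E,F)+K\rho^2|z|}\|u\|_{\L^2(E)^m}.
\end{align}
Optimizing $\rho=\dist(E,F)/(2K|z|)$ produces the Gaussian factor $\e^{-\dist(E,F)^2/(4K|z|)}$, and letting $n\to\infty$ eliminates the truncation dependence. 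The estimate for $\{zL\e^{-zL}\}$ follows by applying Cauchy's integral formula to the holomorphic map $\zeta\mapsto \e^{-\zeta L}$ on a circle of radius $\sim|z|$ lying inside the sector, since $zL\e^{-zL}=-z\,\partial_z\e^{-zL}$. The bound for $\{z\nabla\e^{-z^2L}\}$ is obtained by combining the just-proved off-diagonal bound for $\e^{-z^2L}$ with a localized Caccioppoli-type computation: testing the form~\eqref{a} at $u=\e^{-z^2L}f$ against $u\eta^2$ with a Lipschitz cutoff $\eta$ adapted to $F$, ellipticity controls $\|\eta\nabla u\|_2$ by $\|\nabla\eta\cdot u\|_2$ on an annular enlargement of $F$, where the semigroup off-diagonal bound supplies the required Gaussian decay. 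The hard part is precisely the $n$-uniformity of $K$ in the G\rr{a}rding estimate: only because $\varphi_n$ is bounded (permitting its use as a multiplier on $\IW^{1,2}_D(O)$) \emph{and} its gradient is uniformly bounded by $1$ (so that the error terms depend only on $\nabla\varphi_n$) does the optimization $\rho\sim\dist(E,F)/|z|$ survive the limit $n\to\infty$ with a Gaussian constant that reflects only the coefficient bounds.
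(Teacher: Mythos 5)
Your proposal is correct and takes essentially the same route as the paper, which points to the Davies-trick argument in~\cite{E} with exactly the two modifications you carry out: replacing the Lipschitz weight by the truncations $\varphi_n=\dist_E\wedge n$ (so that multiplication preserves $\IW^{1,2}_D(O)$ and one can pass to the limit $n\to\infty$), and absorbing all error terms, including the zeroth-order one, by Young's inequality so that the perturbed G\rr{a}rding constant depends only on coefficient bounds and not on $\diam(O)$. The derivations for $\{zL\e^{-zL}\}$ via Cauchy's formula and for $\{z\nabla\e^{-z^2L}\}$ via a Caccioppoli estimate combined with the semigroup bound likewise match the standard argument the paper defers to.
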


We continue with estimates in $\L^p$. The following result allows to translate $\L^p$-boundedness into $\L^q\to \L^2$ off-diagonal estimates up to a slight loss in the integrability parameter. Its proof can be obtained by concatenating the relevant parts in~\cite[Prop.~4.4]{E}. Note that geometry in that result is only needed to have an extension operator at ones disposal. This is ensured by Proposition~\ref{Prop: extension} in our case.

\begin{proposition}[Off-diagonal estimates from boundedness]
\label{Prop: bdd to ODE}
	Let $q \in (p_{-}(L), 2)$, $p\in (q,2)$, and let $\psi \in [0, \nicefrac{\pi}{2}-\omega)$. Then $\{ \e^{-zL} \}_{z\in \Sec_\psi}$ satisfies $\L^p \to \L^2$ off-diagonal estimates, and the implicit constants depend on $p$, $q$, $\pLB{q}$, $\psi$, and coefficient bounds.
\end{proposition}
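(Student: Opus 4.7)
The plan is to combine the assumed $\L^q$-boundedness of the semigroup on the positive axis with the $\L^2$ off-diagonal estimates of Proposition~\ref{Prop: Gaffney estimates}, bridged by the Sobolev embedding available on $\W^{1,r}_D(O)$ through the extension operator of Proposition~\ref{Prop: extension}. I would run a Davies-type perturbation: for $\rho > 0$ and a Lipschitz function $\varphi$ with $\|\nabla \varphi\|_\infty \leq 1$, set $L_\rho \coloneqq \e^{\rho \varphi} L \e^{-\rho \varphi}$. A direct computation shows $L_\rho$ has the same leading coefficients as $L$, with only lower-order terms perturbed of size $\mathrm{O}(\rho + \rho^2)$, so that ellipticity and coefficient bounds carry over with quantitative dependence on $\rho$.

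The core estimate to establish is
\begin{align*}
    \| \e^{-zL_\rho} \|_{\L^p \to \L^2} \lesssim |z|^{-\frac{d}{2}\left(\frac{1}{p} - \frac{1}{2}\right)} \e^{C \rho^2 |z|}, \qquad z \in \Sec_\psi,
\end{align*}
with $C$ depending on $p$, $q$, $\pLB{q}$, $\psi$, and coefficient bounds. Once this is in place, specializing $\varphi(x) = \dist_E(x) \wedge M$, letting $M \to \infty$, and optimizing $\rho \sim \dist(E,F)/|z|$ in
\begin{align*}
    \| \e^{-zL} u \|_{\L^2(F)^m} \leq \e^{-\rho \dist(E,F)}\, \| \e^{-zL_\rho} \|_{\L^p \to \L^2}\, \| u \|_{\L^p(E)^m}
\end{align*}
yields the Gaussian factor $\e^{-c\dist(E,F)^2/|z|}$ and closes the proof.

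The perturbed hypercontractive estimate itself splits in two. First, Stein interpolation between $\L^q$-boundedness on $(0,\infty)$ and $\L^2$-contractivity on $\Sec_\psi$ gives $\L^r$-boundedness of $\{\e^{-zL}\}$ on a slightly smaller sector for all $r \in [q, 2]$, and this transfers to $L_\rho$ via a Duhamel expansion controlling the first-order perturbation, at the cost of a factor $\e^{C\rho^2|z|}$. Second, combining the resulting $\L^p$-bound with the gradient estimate for $\e^{-zL_\rho}$ from Proposition~\ref{Prop: Gaffney estimates} and the Sobolev embedding $\W^{1,r}_D(O) \hookrightarrow \L^{r^*}(O)$ (valid by Proposition~\ref{Prop: extension}), a finite Nash--Moser iteration along the chain $p \to p^* \to \cdots \to 2$ upgrades the $\L^p$-bound to the claimed $\L^p \to \L^2$ bound with the sharp $|z|^{-\frac{d}{2}(\frac{1}{p}-\frac{1}{2})}$ prefactor.

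The main obstacle will be keeping precise track of all $\rho$-dependencies throughout the iteration: each Duhamel/Nash--Moser step produces an exponential-in-$\rho^2 |z|$ correction, and one must ensure that (a) only finitely many Sobolev iterations are used — which is why one must fix $p < 2$ rather than aim for $p = p_{-}(L)$ — and (b) the cumulative $\rho$-dependence remains of the form $\e^{C \rho^2 |z|}$, so that the optimization in $\rho$ in Step 4 recovers a genuine Gaussian. This careful bookkeeping is exactly the content of~\cite[Prop.~4.4]{E}; the present statement amounts to verifying that the argument goes through in our weaker geometric framework, since geometry enters only through the extension operator of Proposition~\ref{Prop: extension} and hence only through the Sobolev embedding on $\W^{1,r}_D(O)$.
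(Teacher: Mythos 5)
The paper gives no new proof here; it points to \cite[Prop.~4.4]{E} and remarks that the only geometric input in that argument is a Sobolev extension operator, furnished in our setting by Proposition~\ref{Prop: extension}. Your closing paragraph makes exactly this observation, which is sound. But the internal argument you sketch does not actually match~\cite[Prop.~4.4]{E}, and the divergence opens a genuine gap. The argument in~\cite{E} does \emph{not} re-run Davies' trick: it works at the level of $L$ itself, only composing and interpolating off-diagonal estimates. One applies Riesz--Thorin to the truncated operators $\mathbf{1}_F\,\e^{-zL}\,\mathbf{1}_E$, interpolating the assumed $\L^q$-boundedness (no spatial decay) against the $\L^2$ off-diagonal estimates of Proposition~\ref{Prop: Gaffney estimates} (Gaussian decay); the result is an $\L^r$ off-diagonal estimate with a degraded but still Gaussian constant for every $r$ \emph{strictly} between $q$ and $2$, which is precisely where the loss from $q$ to $p$ arises. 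One then composes with the $\L^2$ off-diagonal estimates for $\{z\nabla\e^{-z^2L}\}$ and the Sobolev embedding $\W^{1,r}_D(O)\hookrightarrow\L^{r^*}(O)$ to raise the target integrability along a finite chain until $\L^2$ is reached. All hypotheses on $L$ enter as black boxes; no perturbed operator ever appears.

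Your route instead attempts to prove the perturbed bound $\|\e^{-zL_\rho}\|_{\L^p\to\L^2}\lesssim |z|^{-\frac{d}{2}(\frac{1}{p}-\frac{1}{2})}\e^{C\rho^2|z|}$ directly, and the weak link is the proposed Duhamel transfer of $\L^r$-boundedness from $\e^{-zL}$ to $\e^{-zL_\rho}$. The perturbation $L_\rho-L$ is genuinely first-order: expanding $a(\e^{-\rho\varphi}u,\e^{\rho\varphi}v)$ produces terms such as $\rho\,A\nabla\varphi\cdot\nabla u$ and $\rho\,\partial_i(A\,u\,\partial_i\varphi)$. Controlling the Duhamel integral $\int_0^z\e^{-(z-s)L_\rho}(L_\rho-L)\e^{-sL}\d s$ in $\L^r$ therefore requires $\L^r$-bounds for $\nabla\e^{-sL}$ and for $\e^{-(z-s)L_\rho}\partial_i$ on $\L^r$, and neither is available: the hypothesis is $\L^q$-boundedness of the plain semigroup only. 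Unlike the $\L^2$ theory, which can be re-derived for $L_\rho$ because the leading coefficients are unchanged, the $\L^q$-bound is a black-box assumption on $L$ that does not transfer to $L_\rho$ without exactly the Riesz-transform information this proposition is supposed to help establish. The interpolation-and-composition route of~\cite{E} sidesteps this circularity entirely, which is why it is the right argument to concatenate here.
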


To the contrary, the following result yields $\L^p$-boundedness from hypercontractivity. The argument is similar to that in~\cite[Prop.~4.4]{E} and we only present the necessary changes.

\begin{proposition}[Boundedness from hypercontractivity]
\label{Prop: hypercontractive to bdd}
	Let $1\leq q < p < r\leq \infty$ be such that $\{ \e^{-tL} \}_{t>0}$ is $\L^q \to \L^r$ bounded. Then $\{ \e^{-tL} \}_{t>0}$ is $\L^p$-bounded, and $\pLB{p}$ depends only on $p$, $q$, $r$, coefficient bounds, and the implicit constant in the assumption.
\end{proposition}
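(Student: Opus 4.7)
The plan is to follow the scheme of~\cite[Prop.~4.4]{E}, using the hypercontractivity assumption as the ``strong endpoint'' in place of the $\L^p$-boundedness invoked there. Setting $\alpha = \tfrac{d}{2}(\tfrac{1}{q} - \tfrac{1}{r})$, the hypothesis reads $\|\e^{-tL}\|_{\L^q \to \L^r} \lesssim t^{-\alpha}$. I would combine this with the $\L^2$ off-diagonal estimates of Proposition~\ref{Prop: Gaffney estimates} by Riesz--Thorin interpolation applied to the truncated family $\ind_F \e^{-tL} \ind_E$. For $p \in (q, r)$, pick $\theta \in (0,1)$ so that one of the interpolated exponents equals $p$: either $1/p_\theta = (1-\theta)/2 + \theta/q = 1/p$ (when $p$ lies between $q$ and $2$) or $1/s_\theta = (1-\theta)/2 + \theta/r = 1/p$ (when $p$ lies between $2$ and $r$). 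The complementary exponent then satisfies $1/p_\theta - 1/s_\theta = \theta(1/q - 1/r) = 2\theta\alpha/d$, and Riesz--Thorin yields
\begin{align}
\| \e^{-tL}(\ind_E u) \|_{\L^{s_\theta}(F)} \lesssim \e^{-c(1-\theta)\, \dist(E,F)^2 / t} \, t^{-\theta \alpha} \, \| u \|_{\L^{p_\theta}(E)},
\end{align}
with the Gaussian factor inherited (with a slightly diminished constant $c > 0$) from the $\L^2$ endpoint.

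Next I would tile $O$ by cubes $\{Q_i\}$ of side $\sqrt{t}$. For a fixed $Q_i$, decompose the input along the dyadic annuli $C_k = \{ x : 2^{k-1}\sqrt{t} \leq \dist(x, Q_i) < 2^k \sqrt{t} \}$ for $k \geq 0$, apply the bound above with $E = C_k$ and $F = Q_i$, and then use H\"{o}lder's inequality on $Q_i$ (in the case $p = p_\theta$) or on $C_k$ (in the case $p = s_\theta$) to pass between $\L^{p_\theta}$, $\L^{s_\theta}$, and $\L^p$ locally. The volumes $|Q_i| \sim t^{d/2}$ and $|C_k| \sim 2^{kd} t^{d/2}$ produce a H\"{o}lder factor of $t^{\theta\alpha}$ (times at most a polynomial factor $2^{kd\theta(1/q - 1/r)}$) that exactly cancels the $t^{-\theta\alpha}$ loss from the interpolation. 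Summation over $k$ is then controlled by the convergent series $\sum_{k \geq 0} \e^{-c(1-\theta)\, 4^{k-1}} \, 2^{kd\theta(1/q - 1/r)}$, and a finite-overlap summation over $i$ produces the uniform-in-$t$ bound $\| \e^{-tL} u \|_p \lesssim \| u \|_p$.

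The main obstacle will be the bookkeeping of interpolation exponents across the cases $p < 2$ and $p > 2$, which exchanges the roles of $p_\theta$ and $s_\theta$, and tracking the constants to verify that they depend only on $p$, $q$, $r$, coefficient bounds, and the hypercontractivity constant. This is precisely the content of the ``necessary changes'' alluded to when invoking~\cite[Prop.~4.4]{E}, where the corresponding endpoint is an $\L^p$-boundedness hypothesis rather than an $\L^q \to \L^r$ mapping property.
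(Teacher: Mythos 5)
Your argument is essentially the paper's: the paper also interpolates the hypercontractive $\L^q\to\L^r$ bound against the $\L^2$ Gaffney estimates of Proposition~\ref{Prop: Gaffney estimates} to obtain $\L^a\to\L^b$ off-diagonal estimates with $a\leq p\leq b$, and then passes from these to $\L^p$-boundedness. The only difference is bookkeeping: the paper outsources the second step to~\cite[Lem.~4.5]{E}, whereas you carry out the cube-and-annulus decomposition with H\"{o}lder and overlap summation by hand; your explicit treatment of the two cases $p\lessgtr 2$ is also slightly more complete than the paper's single choice of interpolation parameter.
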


\begin{proof}
	Let $\theta \in (0,1)$ be such that $\frac{1}{p} = \frac{1-\theta}{2} + \frac{\theta}{r}$ and define $[q,2]_\theta \in (q,2)$ by $\frac{1}{[q,2]_\theta} \coloneqq \frac{1-\theta}{q} + \frac{\theta}{2}$. Interpolate the $\L^q\to \L^r$ bounds from the assumption with the $\L^2$ off-diagonal bounds from Proposition~\ref{Prop: Gaffney estimates} to see that $\{ \e^{-tL} \}_{t>0}$ satisfies $\L^{[q,2]_\theta} \to \L^p$ off-diagonal estimates, where the implicit constants depend on $p$, $r$, and implied constants in the hypothesis. Then the claim follows from~\cite[Lem.~4.5]{E} with the same choices of $s$ and $g$ as in the proof~\cite[Prop.~4.4~(v)]{E}.
\end{proof}

As an application~\cite[Thm.~1.6]{E} one can derive upper bounds for $p_{-}(L)$ and lower bounds for $p_{+}(L)$. Geometry is only used to have an extension operator in hand.

\begin{corollary}
\label{Cor: size of semigroup interval}
	One has $p_{-}(L) = 1$ and $p_{+}(L) = \infty$ if $d=2$ and $p_{-}(L) \leq 2_*$ and $p_{+}(L) \geq 2^*$ if $d\geq 3$.
\end{corollary}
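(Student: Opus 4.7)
The plan is to establish hypercontractivity of $\{\e^{-tL}\}_{t>0}$ and then invoke Proposition~\ref{Prop: hypercontractive to bdd}; the lower endpoint will follow by a duality argument applied to the adjoint system $L^*$.

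For $u\in \L^2(O)^m$ and $t>0$, the function $\e^{-tL}u$ lies in $\dom(L)\subseteq \IW^{1,2}_D(O)$. Proposition~\ref{Prop: extension} sends its componentwise extension into $\W^{1,2}(\R^d)^m$, and Sobolev's embedding on $\R^d$ yields $\|\e^{-tL}u\|_{\L^{2^*}(O)^m}\lesssim \|\e^{-tL}u\|_2+\|\nabla \e^{-tL}u\|_2$ when $d\ge 3$, and $\|\e^{-tL}u\|_{\L^q(O)^m}\lesssim \|\e^{-tL}u\|_2+\|\nabla \e^{-tL}u\|_2$ for any finite $q$ when $d=2$. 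The $\L^2$-boundedness of $\{t\nabla\e^{-t^2 L}\}_{t>0}$ from Proposition~\ref{Prop: Gaffney estimates} translates, after rescaling $s=t^2$, into $\|\nabla \e^{-sL}u\|_2\lesssim s^{-1/2}\|u\|_2$, so the right-hand side above is controlled by $(1+t^{-1/2})\|u\|_2$.

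To match the multiplicative scaling $t^{-1/2}$ demanded by Definition~\ref{Def: boundedness and ODE} also when $t$ is large, I would invoke the inhomogeneous G\r{a}rding inequality~\eqref{label-Garding}, which yields the exponential decay $\|\e^{-tL}\|_{\L^2\to\L^2}\le e^{-\lambda t}$. Splitting $\e^{-tL}=\e^{-L}\e^{-(t-1)L}$ for $t\ge 1$ and combining with the $t=1$ case of the previous paragraph gives $\|\e^{-tL}u\|_{\L^{2^*}(O)^m}\lesssim e^{-\lambda(t-1)}\|u\|_2\lesssim t^{-1/2}\|u\|_2$ uniformly for $t\ge 1$. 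Thus $\{\e^{-tL}\}_{t>0}$ is $\L^2\to\L^{2^*}$-bounded (respectively, $\L^2\to\L^q$-bounded for any finite $q$ when $d=2$), and Proposition~\ref{Prop: hypercontractive to bdd} then delivers $\L^p$-boundedness of the semigroup for $p\in(2,2^*)$ (respectively, $p\in(2,\infty)$), which proves $p_+(L)\ge 2^*$ (respectively, $p_+(L)=\infty$).

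For the lower endpoint, the same argument applied to $L^*$, which satisfies the identical structural hypotheses with conjugated coefficients, yields $p_+(L^*)\ge 2^*$ (respectively, $=\infty$ when $d=2$). The identity $(\e^{-tL})^*=\e^{-tL^*}$ gives $\|\e^{-tL}\|_{\L^p\to\L^p}=\|\e^{-tL^*}\|_{\L^{p'}\to\L^{p'}}$, so $p\in\cI(L)$ if and only if $p'\in\cI(L^*)$; hence $p_-(L)=p_+(L^*)'\le (2^*)'=2_*$ when $d\ge 3$ and $p_-(L)=1$ when $d=2$. The main obstacle is the mismatch between the additive $\W^{1,2}$-type bound from Sobolev's embedding and the multiplicative decay mandated by Definition~\ref{Def: boundedness and ODE} at large $t$; this is precisely where G\r{a}rding's exponential decay takes over.
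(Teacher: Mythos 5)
Your approach — deriving $\L^2\to\L^{2^*}$ hypercontractivity from Sobolev embedding and the $\L^2$ gradient estimate, patching the large-time regime with the G\rr{a}rding exponential decay, then invoking Proposition~\ref{Prop: hypercontractive to bdd} and duality — is a sound reconstruction of the cited \cite[Thm.~1.6]{E}, and for $d\geq 3$ it is correct: the required scaling for $\L^2\to\L^{2^*}$ boundedness is precisely $t^{-\frac{d}{2}(\frac{1}{2}-\frac{1}{2^*})}=t^{-1/2}$, and your combination of $\|\nabla\e^{-sL}u\|_2\lesssim s^{-1/2}\|u\|_2$ for small $t$ with $\|\e^{-tL}\|_{2\to 2}\leq\e^{-\lambda t}$ for large $t$ delivers exactly this.

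For $d=2$, however, there is a genuine gap. Definition~\ref{Def: boundedness and ODE} demands, for $\L^2\to\L^q$ boundedness with $q<\infty$, the decay rate $t^{-\frac{d}{2}(\frac{1}{2}-\frac{1}{q})}=t^{-(\frac{1}{2}-\frac{1}{q})}$, whose exponent is strictly less than $\frac{1}{2}$. Your inhomogeneous Sobolev bound produces $\|\e^{-tL}u\|_q\lesssim t^{-1/2}\|u\|_2$, and for $t\leq 1$ this is a \emph{weaker} statement than the required $t^{-(\frac{1}{2}-\frac{1}{q})}\|u\|_2$: having a constant multiple of the larger quantity $t^{-1/2}$ as an upper bound does not give you the smaller one. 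The exponential-decay split only repairs $t\geq 1$; the small-$t$ regime is where the scaling actually matters, and that is exactly where your bound falls short. As a result you cannot legitimately feed this into Proposition~\ref{Prop: hypercontractive to bdd}, whose proof interpolates off-diagonal estimates carrying the precise power and would break with a mismatched exponent.

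The natural repair is the homogeneous Gagliardo--Nirenberg inequality $\|f\|_{\L^q(\R^2)}\lesssim\|f\|_{\L^2(\R^2)}^{2/q}\|\nabla f\|_{\L^2(\R^2)}^{1-2/q}$, which — combined with $\|\e^{-tL}u\|_2\leq\|u\|_2$ and $\|\nabla\e^{-tL}u\|_2\lesssim t^{-1/2}\|u\|_2$ — yields precisely $t^{-(\frac{1}{2}-\frac{1}{q})}\|u\|_2$. But to transfer it from $\R^2$ to $O$ through the extension operator you would need the separate bound $\|\Ext f\|_{\L^2(\R^2)}\lesssim\|f\|_{\L^2(O)}$, which Proposition~\ref{Prop: extension} as stated does not assert (it only controls $\|\Ext f\|_{\W^{1,2}(\R^d)}$ by $\|f\|_{\W^{1,2}(O)}$). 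If you want to pursue this self-contained route you must either verify this additional $\L^2$-boundedness of $\Ext$ from the construction in \cite{BHT}, or give a different argument for the homogeneous rate — the paper sidesteps the whole question by citing \cite[Thm.~1.6]{E} directly.
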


\subsection{Boundedness of the $\H^\infty$-calculus and Riesz transforms}
\label{Subsec: Hoo and Riesz}

In this section, we recall results from~\cite{Memoirs,E} on $\L^p$-boundedness of the $\H^\infty$-calculus and the Riesz transform associated with $L$. They are consequences of Proposition~\ref{Prop: bdd to ODE} and an extrapolation result due to Blunck and Kunstmann~\cite{Blunck-Kunstmann}. The use of geometry is completely hidden in the results providing off-diagonal estimates.

The following result is taken from~\cite[Thm.~1.3]{E}. Observe that the operator $f(L)$ is well-defined on $\L^2(O)^m$ owing to the Crouzeix--Delyon theorem for m-$\omega$-accretive operators~\cite[Cor.~7.1.17]{Haase}. Note that in the last line of~\cite[Thm.~1.3]{E}, the argument using the inclusion $\L^2(\Omega) \subseteq \L^p(\Omega)$ has to be substituted by a standard argument using Fatou's lemma.

\begin{proposition}[$\H^\infty$-calculus]
\label{Prop: Hoo}
	Let $p_{-}(L) < q < p_{+}(L)$, $p \in (q,2) \cup (2,q)$, and $\varphi \in (\omega, \pi)$. Then for every $f\in \H^\infty(\Sec_\varphi)$ one has
	\begin{align}
		\| f(L) u \|_p \lesssim \| f\|_\infty \|u \|_p \qquad (u\in \L^p(O)^m \cap \L^2(O)^m),
	\end{align}
	where the implicit constant depends on $p$, $q$, $\pLB{q}$, $\varphi$, and coefficient bounds.
\end{proposition}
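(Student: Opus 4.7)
The plan is to extrapolate the $\L^2$-bounded $\H^\infty$-calculus via the Blunck--Kunstmann extrapolation theorem~\cite{Blunck-Kunstmann}, proving the $p<2$ case directly and reducing the $p>2$ case to it by duality on the adjoint $L^*$. The $\L^2$-boundedness is classical: since $L$ is m-$\omega$-accretive on the Hilbert space $\L^2(O)^m$, McIntosh's theorem for such operators (see~\cite[Cor.~7.1.17]{Haase}) supplies a bounded $\H^\infty(\Sec_\varphi)$-calculus on $\L^2(O)^m$ for every $\varphi\in(\omega,\pi)$, with $\|f(L)\|_{\L^2\to\L^2}\lesssim\|f\|_\infty$ and implicit constant depending only on $\varphi$ and $\omega$.

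Suppose first that $q<p<2$. By Proposition~\ref{Prop: bdd to ODE}, the semigroup $\{\e^{-tL}\}_{t>0}$ satisfies $\L^s\to\L^2$ off-diagonal estimates for some $s\in(q,p)$, with constants controlled by $\pLB{q}$ and coefficient bounds. Combined with the $\L^2$-bounded $\H^\infty$-calculus and the Gaussian $\L^2$ off-diagonal bounds from Proposition~\ref{Prop: Gaffney estimates}, these are precisely the hypotheses of the extrapolation theorem of Blunck and Kunstmann~\cite{Blunck-Kunstmann}, which delivers a weak-type $(s,s)$ estimate for $f(L)$ of norm $\lesssim\|f\|_\infty$. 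Marcinkiewicz interpolation with the strong $\L^2$-bound then produces the strong $\L^p$-estimate, and tracking constants through the argument yields the dependence on $p$, $q$, $\pLB{q}$, $\varphi$, and coefficient bounds asserted in the proposition.

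For the case $2<p<q$ I would dualize. The adjoint $L^*$ is of the same type as $L$, with the same numerical-range sector and the same ellipticity and boundedness constants, and $\{\e^{-tL^*}\}_{t>0}$ is the adjoint semigroup; in particular its $\L^{q'}$-operator norm equals $\pLB{q}$, so $q'\in(p_{-}(L^*),2)$. Applying the already-proved case to $L^*$ and $\cl{f}\in\H^\infty(\Sec_\varphi)$ yields $\|\cl{f}(L^*)v\|_{p'}\lesssim\|f\|_\infty\|v\|_{p'}$. The duality identity $(f(L)u,v)_2=(u,\cl{f}(L^*)v)_2$, valid on $(\L^2\cap\L^p)\times(\L^2\cap\L^{p'})$ via the Hilbert-space functional calculi of $L$ and $L^*$, then transfers this bound into the desired $\L^p$-estimate for $f(L)$.

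The chief technicality is the precise bookkeeping in the Blunck--Kunstmann step: one must verify that the order-two Gaussian decay supplied by Proposition~\ref{Prop: bdd to ODE} meets the decay rate required by the extrapolation, and that $\pLB{q}$ emerges as the correct governing quantity in the final estimate. All geometric content has already been absorbed into Propositions~\ref{Prop: Gaffney estimates} and~\ref{Prop: bdd to ODE}, so no further appeal to Assumption~\ref{Ass: N} is needed at this stage.
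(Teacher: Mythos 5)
Your proposal follows exactly the route the paper takes: the paper cites \cite[Thm.~1.3]{E}, whose proof is precisely the Blunck--Kunstmann extrapolation of the $\L^2$-bounded calculus (supplied by Crouzeix--Delyon, not McIntosh, in \cite[Cor.~7.1.17]{Haase} --- a small misattribution on your part) using the $\L^s\to\L^2$ off-diagonal estimates from Proposition~\ref{Prop: bdd to ODE}, together with duality to reach $p>2$. The only point you omit, and which the paper flags explicitly, is that the last step of \cite[Thm.~1.3]{E} uses the inclusion $\L^2(\Omega)\subseteq\L^p(\Omega)$ available on bounded domains; in the present unbounded setting this must be replaced by a Fatou-type limiting argument, but otherwise your sketch matches the paper's argument.
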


In the same spirit, we obtain $\L^p$-boundedness of the Riesz transform, which upgrades to a $p$-bound for $L^{-\frac{1}{2}}$. The result is taken from~\cite[Lem.~6.1 \& Cor.~6.2]{E}.  As above, geometry is only used to provide off-diagonal estimates for $\{ t \nabla \e^{-t^2 L}\}_{t>0}$. For this it is crucial that $p<2$. Indeed, in this case the decomposition $\sqrt{2t} \nabla \e^{-2t L} = \sqrt{2} (\sqrt{t} \nabla \e^{-t L}) \e^{-t L}$ lets us conclude $\L^p \to \L^2$ off-diagonal estimates for $\{ t \nabla \e^{-t^2 L}\}_{t>0}$ from $\L^q$-boundedness of $\{ \e^{-t L} \}_{t>0}$ and $\L^2$ off-diagonal bounds for $\{ t \nabla \e^{-t^2 L}\}_{t>0}$ using composition, Proposition~\ref{Prop: bdd to ODE} and Proposition~\ref{Prop: Gaffney estimates}.

\begin{proposition}[Riesz transform]
\label{Prop: Riesz}
	Let $p_{-}(L) < q < p < 2$. Then the Riesz transform $\nabla L^{-\frac{1}{2}}$ is $\L^p$-bounded. Moreover, this bound can be upgraded to the $p$-bound
	\begin{align}
		\| L^{-\frac{1}{2}} u \|_{\IW^{1,p}(O)} \lesssim \| u \|_p \qquad (u\in \L^p(O)^m \cap \L^2(O)^m),
	\end{align}
	where implicit constant depends on $p$, $q$, $\pLB{q}$, and coefficient bounds.
\end{proposition}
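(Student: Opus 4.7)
The plan is to follow the Blunck--Kunstmann extrapolation strategy, as developed in Auscher's memoir and adapted to mixed boundary conditions in~\cite{E}. The key input is $\L^p\to\L^2$ off-diagonal estimates for the gradient family $\{t\nabla\e^{-t^2L}\}_{t>0}$, which together with the $\L^2$-boundedness of $\nabla L^{-\frac{1}{2}}$ from Theorem~\ref{Thm: Kato} produces the $\L^p$-bound for the Riesz transform after extrapolation. A separate softer argument then upgrades to a bound on the full $\IW^{1,p}$-norm.

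\textbf{Step 1: Off-diagonal estimates for $\{t\nabla\e^{-t^2L}\}$.} By the semigroup law,
\begin{align}
t\nabla \e^{-t^2 L} \;=\; \sqrt 2\cdot\Bigl(\tfrac{t}{\sqrt 2}\nabla \e^{-(t/\sqrt 2)^2 L}\Bigr)\cdot \e^{-(t^2/2)L}.
\end{align}
The first factor belongs to the Gaffney family and has $\L^2$ off-diagonal estimates by Proposition~\ref{Prop: Gaffney estimates}, while the second factor has $\L^p\to\L^2$ off-diagonal estimates by Proposition~\ref{Prop: bdd to ODE}, using precisely that $p_{-}(L)<q<p<2$. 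For $u$ supported in $E\subseteq O$ and target set $F$, I would split $\e^{-(t^2/2)L}u=\chi_G\e^{-(t^2/2)L}u+\chi_{G^c}\e^{-(t^2/2)L}u$ with $G$ a shell halfway between $E$ and $F$, apply the two estimates to each summand, and combine the Gaussian exponents via $\dist(E,G)^2+\dist(G,F)^2\gtrsim \dist(E,F)^2$. The scaling factors multiply to $|t|^{-d(1/p-1/2)}$ as required.

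\textbf{Step 2: Extrapolation and upgrade.} Inserting the off-diagonal bounds from Step~1 into the Blunck--Kunstmann extrapolation theorem~\cite{Blunck-Kunstmann} via the subordination identity $\nabla L^{-\frac{1}{2}} = \tfrac{2}{\sqrt{\pi}}\int_0^\infty \nabla\e^{-s^2 L}\,ds$ yields $\|\nabla L^{-\frac{1}{2}}u\|_p\lesssim \|u\|_p$, with constants as stated. It remains to control $\|L^{-\frac{1}{2}}u\|_p$. The full Gårding inequality~\eqref{label-Garding} implies $\|\e^{-tL}\|_{\L^2\to\L^2}\leq \e^{-\lambda t}$, so Riesz--Thorin interpolation with the uniform bound $\|\e^{-tL}\|_{\L^q\to\L^q}\leq \pLB{q}$ gives $\|\e^{-tL}\|_{\L^p\to\L^p}\leq C\e^{-ct}$ for some $c>0$. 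Then the subordination formula
\begin{align}
L^{-\frac{1}{2}}u\;=\;\tfrac{1}{\sqrt{\pi}}\int_0^\infty \e^{-tL}u\,\frac{dt}{\sqrt t}
\end{align}
converges absolutely in $\L^p$ on the dense subspace $\L^p\cap\L^2$ and agrees with the $\L^2$-definition there, giving the missing bound.

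\textbf{Main obstacle.} The principal technical care is the composition in Step~1: the two factors live on slightly different (though comparable) time scales, and the Gaussian exponents have to be combined with a constant independent of $t$, which requires choosing $G$ and applying the triangle inequality with some attention. A secondary concern is the transfer of the Blunck--Kunstmann machinery from $\R^d$ to our open set $O$ with a Dirichlet part on $D$: the cut-off family $(I-\e^{-r^2 L})^n$ must act stably in $\IW^{1,p}_D(O)$ and interact correctly with dyadic annuli. This has already been carried out in~\cite[Lem.~6.1]{E} and, under Assumption~\ref{Ass: N} and Assumption~\ref{Ass: D}, survives verbatim here thanks to the extension operator of Proposition~\ref{Prop: extension} and the off-diagonal estimates of Propositions~\ref{Prop: Gaffney estimates} and~\ref{Prop: bdd to ODE}.
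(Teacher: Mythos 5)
Your proposal is correct and tracks the paper's own route: the paper delegates to~\cite[Lem.~6.1 \& Cor.~6.2]{E} and isolates exactly the same two ingredients you use, namely the semigroup decomposition $\sqrt{2t}\,\nabla\e^{-2tL}=\sqrt{2}\,(\sqrt{t}\,\nabla\e^{-tL})\,\e^{-tL}$ combined with Propositions~\ref{Prop: Gaffney estimates} and~\ref{Prop: bdd to ODE} to obtain the $\L^p\to\L^2$ off-diagonal estimates for the gradient family, followed by the Blunck--Kunstmann extrapolation. Your soft upgrade of the Riesz bound to the full $\IW^{1,p}$-norm via the exponential decay $\|\e^{-tL}\|_{\L^p\to\L^p}\lesssim\e^{-ct}$ (from G\aa rding on $\L^2$ interpolated against the uniform $\L^q$ bound) and the subordination integral is the standard mechanism behind~\cite[Cor.~6.2]{E}.
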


\section[Survey on the p upper-bound when p < 2]{Survey on the $p$-bound for $L^\frac{1}{2}$ when $p<2$}
\label{Sec: p small survey}

In Proposition~\ref{Prop: Riesz} we have seen the $p$-bound for $L^{-\frac{1}{2}}$ when $p<2$. The goal of this section is to investigate the complementing $p$-bound for $L^\frac{1}{2}$. The argument is in large parts already known in the literature. Therefore, we will mainly review these known results here. Of course we will indicate all necessary modifications to adapt these results to our setting. There is, however, one ingredient needed in this section that is really novel: the global Hardy inequality adapted to an unbounded Dirichlet part in Theorem~\ref{Thm: Hardy inequality}. We will start with this result.

\subsection{Hardy's inequality}
\label{Subsec: hardy}

The main result of this subsection is the following Hardy's inequality.

\begin{theorem}[Hardy's inequality]
	\label{Thm: Hardy inequality}
	Assume that the pair $(O,D)$ satisfies Assumption~\ref{Ass: N} and that $D$ satisfies Assumption~\ref{Ass: D'}, and let $p\in (1,\infty)$. Then Hardy's inequality holds true for $\W^{1,p}_D(O)$, that is, for all $f\in \W^{1,p}_D(O)$ one has
	\begin{align}
		\int_O \left| \frac{f}{\dist_D} \right|^p \d x \lesssim \|f\|_{\W^{1,p}(O)}^p.
	\end{align}
\end{theorem}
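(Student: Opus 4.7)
The plan is to reduce from $O$ to $\R^d$ using the Sobolev extension operator of Proposition~\ref{Prop: extension}, and then to invoke a classical Hardy inequality for $\W^{1,p}_D(\R^d)$ that rests on the uniform $p$-fatness of $(d-1)$-regular sets. Given $f\in \W^{1,p}_D(O)$, I set $F \coloneqq \Ext f \in \W^{1,p}_D(\R^d)$, which by Proposition~\ref{Prop: extension} satisfies $F|_O = f$ and $\| F \|_{\W^{1,p}(\R^d)} \lesssim \| f \|_{\W^{1,p}(O)}$. Since $\dist_D$ is computed in the ambient space $\R^d$, monotonicity of integration yields
\begin{align}
	\int_O \Big| \frac{f}{\dist_D} \Big|^p \d x \leq \int_{\R^d} \Big| \frac{F}{\dist_D} \Big|^p \d x,
\end{align}
so it remains to prove the global Hardy inequality for $\W^{1,p}_D(\R^d)$.

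For the latter, density of $\Cont^\infty_D(\R^d)$ in $\W^{1,p}_D(\R^d)$ together with Fatou's lemma reduces the claim to smooth functions $\varphi \in \Cont^\infty_D(\R^d)$, for which $\varphi/\dist_D$ is bounded and compactly supported. The target inequality is then the classical Hardy inequality of Lewis--Wannebo: whenever the closed set $D \subseteq \R^d$ is \emph{uniformly $p$-fat}, one has $\| u/\dist_D \|_p \lesssim \| \nabla u \|_p$ for all $u\in \Cont^\infty_c(\R^d \setminus D)$. The remaining ingredient is that Assumption~\ref{Ass: D'} implies uniform $p$-fatness of $D$ for every $p\in (1,\infty)$; this is a standard consequence of the $(d-1)$-dimensional lower mass bound, via a test measure argument using the Riesz kernel of order one.

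The main technical subtlety is the scale mismatch between the local range $r\in (0,1]$ in Assumption~\ref{Ass: D'} and the need for an estimate valid at all scales. Large scales, however, are harmless: on $\{ \dist_D > 1\}$ one has the pointwise bound $|F/\dist_D|^p \leq |F|^p$, so the contribution there is controlled directly by $\| F \|_p^p \leq \| F \|_{\W^{1,p}(\R^d)}^p$. Splitting the global integral at the level set $\{\dist_D = 1\}$ and applying the Lewis--Wannebo estimate on $\{ \dist_D \leq 1\}$ then completes the proof, in combination with the extension step.
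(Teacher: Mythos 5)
Your extension step and the observation that $\{\dist_D > 1\}$ is harmless are both fine, and the reduction to small scales is a sensible idea. The gap is in the phrase ``applying the Lewis--Wannebo estimate on $\{\dist_D \le 1\}$.'' The Lewis--Wannebo inequality is a \emph{global} statement: if $D$ is uniformly $p$-fat at \emph{all} scales, then $\int_{\R^d}|u/\dist_D|^p \lesssim \int_{\R^d}|\nabla u|^p$. You rightly note that Assumption~\ref{Ass: D'} only controls radii $r\le 1$, so $D$ may well fail to be globally uniformly $p$-fat (take, for instance, a $10\Z$-periodic union of unit spheres along a coordinate axis in $\R^d$, $d\ge3$: it satisfies Assumption~\ref{Ass: D'} but the lower mass bound, hence fatness, fails at large radii). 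Restricting the domain of integration in the \emph{conclusion} to $\{\dist_D\le 1\}$ does not weaken the \emph{hypothesis} of the theorem you are invoking: the black-box Lewis--Wannebo result still requires global fatness and simply does not apply to such $D$. As written, the split does not reduce what you need from the fatness assumption, so the key step is a non sequitur.

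What makes a localization of this type actually work is that the \emph{pointwise} Hardy inequality of Haj{\l}asz and Lehrb\"ack---the mechanism underlying Lewis--Wannebo---controls $|u(x)|$ by $\dist_D(x)$ times a restricted maximal function of $|\nabla u|$ at scales comparable to $\dist_D(x)$, and therefore only uses $p$-fatness at those scales. Integrating that pointwise bound over $\{\dist_D\le 1\}$ would indeed only require fatness at scales $\lesssim 1$, which Assumption~\ref{Ass: D'} supplies. The paper instead localizes geometrically: it tiles $\R^d$ by cubes $Q_k$ of diameter $1/4$, shows that the bounded sets $2Q_k\setminus D$ have Ahlfors--David regular boundary with constants uniform in $k$, and applies the Dirichlet Hardy inequality (Proposition~\ref{Prop: Dirichlet Hardy}) on each bounded piece, so that the boundedness of the pieces caps the relevant scales automatically, before summing with bounded overlap. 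Either route---invoking the pointwise version explicitly, or tiling as in the paper---would close the gap; what you have written does not.
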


Using the extension operator from Proposition~\ref{Prop: extension}, Theorem~\ref{Thm: Hardy inequality} is a direct consequence of the following whole-space version.

\begin{lemma}
	\label{Lem: Hardy whole space}
	Assume Assumption~\ref{Ass: D'}, and let $p\in (1,\infty)$. Then Hardy's inequality holds for $\W^{1,p}_D(\R^d)$, that is, for all $f\in \W^{1,p}_D(\R^d)$ holds
	\begin{align}
		\int_{\R^d} \left| \frac{f}{\dist_D} \right|^p \d x \lesssim \|f\|_{\W^{1,p}(\R^d)}^p.
	\end{align}
\end{lemma}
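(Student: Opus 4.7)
The plan is to reduce the global Hardy inequality to the classical implication \emph{Ahlfors--David regularity $\Rightarrow$ uniform $p$-fatness $\Rightarrow$ Hardy's inequality}, with a preliminary truncation at scale~$1$ to accommodate the restricted scale range of Assumption~\ref{Ass: D'}. First I would split the integral into $B \coloneqq \{\dist_D < 1\}$ and its complement. On $\R^d \setminus B$ one has $1/\dist_D \leq 1$, so $\int_{\R^d \setminus B} |f/\dist_D|^p \d x \leq \|f\|_p^p$, which is absorbed into $\|f\|_{\W^{1,p}(\R^d)}^p$. The entire work is therefore to bound the contribution of $B$.

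To this end, I would first extract from Assumption~\ref{Ass: D'} the uniform capacity density condition: there is $c>0$ such that for every $x\in D$ and every $r\in(0,1]$,
\[
	\mathrm{cap}_p\bigl(\B(x,r)\cap D;\,\B(x,2r)\bigr) \geq c r^{d-p}.
\]
This is the familiar statement that $(d-1)$-Ahlfors regular sets are uniformly $p$-fat for every $p\in(1,\infty)$; the reason is that the codimension of $D$ is $1<p$, so a Lipschitz cutoff constructed from $\dist_D/r$ combined with the lower bound $\HM^{d-1}(\B(x,r)\cap D)\geq c r^{d-1}$ from Assumption~\ref{Ass: D'} yields the estimate through a Maz'ya-type trace/capacity inequality on $\B(x,2r)$.

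With uniform $p$-fatness at scales $\leq 1$ in hand, I would invoke the classical implication \emph{fatness $\Rightarrow$ Hardy} (Ancona, Lewis, Wannebo). Concretely, one performs a Whitney decomposition of $B\setminus D$; each Whitney cube $Q$ has $\ell(Q)\approx\dist(Q,D)\lesssim 1$ and, via the capacitary Poincar\'{e} inequality provided by $p$-fatness applied on a bounded enlargement $Q^*$ that touches~$D$, satisfies
\[
	\int_Q |f|^p \d x \lesssim \ell(Q)^p \int_{Q^*} |\nabla f|^p \d x.
\]
Summing over Whitney cubes and using $\dist_D \approx \ell(Q)$ on $Q$ together with the bounded overlap of $\{Q^*\}_Q$ delivers the desired estimate on $B$.

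The main obstacle I anticipate is verifying that an $f\in\W^{1,p}_D(\R^d)$ really does vanish $p$-quasi-everywhere on $D$, so that the capacitary Poincar\'{e} inequality applies to $f$; this is standard in nice settings but requires care here because $\W^{1,p}_D(\R^d)$ is defined as the closure of the test class~\eqref{Eq: C_D^infty}, and one has to pass through an approximation argument (for instance via capacitary cutoffs, or a Hedberg--Kilpel\"{a}inen-type truncation) to transfer the boundary condition from smooth approximants to their limits in the quasi-everywhere sense needed by the capacitary estimate.
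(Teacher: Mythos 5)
Your argument is correct in spirit, and it takes a genuinely different route from the paper. The paper's proof tiles $\R^d$ by a grid of cubes $Q_k$ of diameter $\nicefrac{1}{4}$ and forms the auxiliary bounded open sets $O_k \coloneqq 2Q_k \setminus D$. The key observation is that $\bd O_k = \bd(2Q_k) \cup (D\cap 2Q_k)$ is Ahlfors--David regular with constants controlled solely by those in Assumption~\ref{Ass: D'} and the dimension: the cube boundary supplies the regularity at scales where $D$ itself would not, and since $\diam(O_k) = \nicefrac12$ only scales $\leq \nicefrac12$ matter, so Assumption~\ref{Ass: D'} suffices for the $D$-portion. Then the cited pure-Dirichlet Hardy inequality (Proposition~\ref{Prop: Dirichlet Hardy}, itself a Lehrb\"ack/Haj{\l}asz-type result) is applied to $\chi_k f$ on each $O_k$, and the bounded overlap of $(O_k)_k$ gives the global bound. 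This manufacturing of bounded auxiliary domains lets the paper reuse the black-box Dirichlet Hardy inequality verbatim and also neatly handles the boundary-vanishing issue: one first proves the inequality for test functions $\Cont^\infty_{\bd O_k}(O_k)$ (and $\chi_k f_n$ is such a test function whenever $f_n \in \Cont^\infty_D(\R^d)$ approximates $f$), then extends by Fatou, so no quasi-everywhere vanishing argument is needed.

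Your proposal instead truncates globally at the scale $1$ and goes under the hood of the same fatness-implies-Hardy machinery: uniform $p$-fatness of $D$ at scales $\leq 1$, a Whitney decomposition, and a capacitary Poincar\'e inequality on fattened Whitney cubes touching $D$. This is a legitimate alternative; what it buys you is independence from Proposition~\ref{Prop: Dirichlet Hardy} as a black box, but at the cost of tracking the scale restriction through the Whitney argument (you need the fattened cubes $Q^*$ to remain at scales where Assumption~\ref{Ass: D'} applies, which forces a smaller truncation threshold or a remark that $(d-1)$-regularity on $(0,1]$ self-improves to $(0,C]$ for any fixed $C$) and, as you correctly flag yourself, a separate argument that $f\in\W^{1,p}_D(\R^d)$ vanishes $p$-quasi-everywhere on $D$. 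Two minor points to tighten: first, the Whitney decomposition should be of $\R^d\setminus D$ (keeping only cubes with $\dist(Q,D)\lesssim 1$), not of $B\setminus D$, since the latter would produce small cubes near $\{\dist_D=1\}$ with $\ell(Q)\not\approx\dist_D$; second, the quasi-everywhere vanishing really does need to be spelled out, because $\W^{1,p}_D(\R^d)$ is defined as a closure of a test class rather than intrinsically by a trace condition. The paper avoids both of these by its choice of auxiliary domains and by the Fatou extension of the test-function inequality.
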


The proof of Lemma~\ref{Lem: Hardy whole space} relies on the following Hardy's inequality with pure Dirichlet boundary conditions, which is essentially contained in~\cite{Lehrback-PointwiseHardy}, see also~\cite{Hajlasz-PointwiseHardy}. Dependence of the implicit constants becomes apparent from an inspection of the proof.
\begin{proposition}
	\label{Prop: Dirichlet Hardy}
	Let $\Xi \subseteq \R^d$ be open. Assume that $\bd \Xi$ is Ahlfors--David regular, where either $\Xi$ is bounded or $\bd \Xi$ is unbounded. Then one gets the estimate
	\begin{align}
		\int_\Xi \left| \frac{f}{\dist_{\bd \Xi}} \right|^p \d x \lesssim \int_\Xi |\nabla f|^p \d x \mathrlap{\qquad (f\in \Cont_{\bd \Xi}^\infty(\Xi)).}
	\end{align}
	The implicit constant depends on geometry only via the implied constants from Ahlfors--David regularity of $\bd \Xi$.
	The inequality extends to $\W^{1,p}_{\bd \Xi}(\Xi)$ owing to Fatou's lemma.
\end{proposition}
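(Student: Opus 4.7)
The strategy follows the Hajłasz--Lehrbäck scheme of combining a pointwise Hardy inequality with the Hardy--Littlewood maximal theorem. The plan is to first exploit Ahlfors--David regularity of $\bd \Xi$ to extract a uniform lower bound on $(d-1)$-Hausdorff content, then feed it into the pointwise Hardy machinery, and finally upgrade to the $\L^p$ statement via the maximal theorem. The stated dichotomy (either $\Xi$ bounded or $\bd \Xi$ unbounded) enters precisely to ensure that the AD-bound is available at every scale $r$ we actually need: if $\Xi$ is bounded, then $\dist_{\bd \Xi}(x) \leq \diam(\Xi) \leq \diam(\bd \Xi)$ so scales are uniformly controlled; if $\bd \Xi$ is unbounded, then $\diam(\bd \Xi) = \infty$ and \eqref{Eq: d-1} holds at all scales without restriction. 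From this one extracts, for every $x \in \Xi$,
\begin{align}
\HM_\infty^{d-1}\bigl(\B(y, 2\dist_{\bd \Xi}(x)) \cap \bd \Xi\bigr) \gtrsim \dist_{\bd \Xi}(x)^{d-1},
\end{align}
where $y \in \bd \Xi$ is a near-best approximating boundary point, with constants depending only on the AD data and $d$.

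Second, I would invoke the pointwise Hardy inequality from \cite{Hajlasz-PointwiseHardy, Lehrback-PointwiseHardy} in the following quantitative form: for any $q \in (1, p)$ and any $f \in \Cont_{\bd \Xi}^\infty(\Xi)$ (extended by zero outside $\Xi$),
\begin{align}
|f(x)|^q \lesssim \dist_{\bd \Xi}(x)^q \, M(|\nabla f|^q \ind_\Xi)(x) \qquad (x \in \Xi),
\end{align}
where $M$ is the Hardy--Littlewood maximal operator on $\R^d$. The Hedberg-type proof of this inequality uses only a chain/telescoping argument on balls centered at $x$ together with the lower Hausdorff content bound at the corresponding scales, so the implied constant depends on geometry solely through the AD constants. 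The condition $q > 1$ can be secured because the ``codimension'' of $\bd \Xi$ is $d - (d-1) = 1$ and the content condition $d - 1 > d - p$ needed for the pointwise estimate reduces to $p > 1$, giving room to take $q \in (1,p)$.

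Third, dividing by $\dist_{\bd \Xi}(x)^q$, raising to the power $p/q > 1$, and integrating over $\Xi$, the Hardy--Littlewood maximal theorem on $\L^{p/q}(\R^d)$ yields
\begin{align}
\int_\Xi \left| \frac{f}{\dist_{\bd \Xi}} \right|^p \d x \lesssim \int_{\R^d} M(|\nabla f|^q \ind_\Xi)^{p/q} \d x \lesssim \int_\Xi |\nabla f|^p \d x,
\end{align}
which is the claim for test functions, with the asserted dependence of constants. The extension to $\W^{1,p}_{\bd \Xi}(\Xi)$ is a routine Fatou argument: for a sequence $f_n \in \Cont_{\bd\Xi}^\infty(\Xi)$ converging to $f$ in $\W^{1,p}$, pass to an a.e.-convergent subsequence and apply Fatou on the left-hand side while using strong $\L^p$ convergence of the gradients on the right.

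The main obstacle is not in the analysis but in the careful bookkeeping of scales: one needs the AD lower bound at every scale comparable to $\dist_{\bd \Xi}(x)$, and this is exactly where the bounded/unbounded dichotomy is unavoidable (otherwise large balls that no longer meet $\bd \Xi$ would break the content estimate). The rest is classical once the pointwise Hardy inequality is in hand.
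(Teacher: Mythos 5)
Your proposal is correct and follows precisely the Haj\l{}asz--Lehrb\"{a}ck route (content lower bound from AD regularity $\Rightarrow$ pointwise $q$-Hardy with a fractional maximal function for some $q\in(1,p)$, available because the codimension condition $d-1>d-q$ reduces to $q>1$ $\Rightarrow$ maximal theorem on $\L^{p/q}$ $\Rightarrow$ Fatou), which is exactly the argument "essentially contained in" the references the paper cites for this proposition. Your reading of the bounded/unbounded dichotomy as the device guaranteeing the AD estimate at scale $\approx\dist_{\bd\Xi}(x)$ (using $2\dist_{\bd\Xi}(x)\le\diam(\Xi)=\diam(\bd\Xi)$ in the bounded case) and your tracking of the constant dependence through the AD data both match the paper's intent.
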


Let us come back to the proof of Lemma~\ref{Lem: Hardy whole space}.

\begin{proof}[{Proof of Lemma~\ref{Lem: Hardy whole space}}]
	Let $(Q_k)_k$ be a grid of open cubes of diameter $\nicefrac{1}{4}$. We consider the sets $O_k \coloneqq 2 Q_k \setminus D$. Each $O_k$ has an Ahlfors--David regular boundary where the implicit constants depend only on the implied constants in Assumption~\ref{Ass: D'} and dimension.

	To see this, take a ball $B$ centered in $\bd O_k$ with radius $r$ at most $\nicefrac{1}{2}$ (which equals the diameter of $O_k$). One has $\bd O_k = \bd (2Q_k) \cup (D \cap 2 Q_k)$, which follows from porosity of $D$ (see~\cite[discussion before Cor.~2.11]{BEH}) and closedness of $D$ by elementary geometric arguments. Consequently, the lower bound follows from the $(d-1)$-regularity of $\bd(2 Q_k)$ or the $(d-1)$-regularity of $D$, depending on the location of the center of $B$. The upper bound follows similarly if $B$ doesn't intersect either $\bd (2 Q_k)$ or $D$. Otherwise, say $B$ is centered in $\bd (2 Q_k)$ and intersects $D$ in $x$. Then we estimate $\HM^{d-1}(B\cap \bd O_k) \leq \HM^{d-1}(B\cap \bd (2 Q_k)) + \HM^{d-1}(\B(x,2r)\cap D)$ and the estimate follows again from the $(d-1)$-regularity of the two portions of the boundary. Note that all constants are uniform in $k$.

	Now pick cutoff function $\chi_k$ which are supported in $2 Q_k$ and equal $1$ on $\cl{Q}_k$. Up to translation, we can use the same cut-off function for each $k$. Let $f\in \W^{1,p}_D(\R^d)$ and estimate using  Proposition~\ref{Prop: Dirichlet Hardy} and the bounded overlap of $(O_k)_k$ that
	\begin{align}
		\int_{\R^d\setminus D} \left| \frac{f}{\dist_D} \right|^p \d x \leq \sum_k \int_{O_k} \left| \frac{\chi_k f}{\dist_{\bd O_k}} \right|^p \d x  \lesssim \sum_k \|\chi_k f\|_{\W^{1,p}(2 Q_k)}^p \lesssim \|f\|_{\W^{1,p}(\R^d)}^p.
	\end{align}
	Note that at the first \enquote{$\lesssim$} we crucially use the control of implicit constants in the Dirichlet Hardy inequality.
\end{proof}

\subsection{Calder\'{o}n--Zygmund decomposition}
\label{Subsec: CZ}

The goal of this subsection is to investigate a Calder\'{o}n--Zygmund decomposition for functions in the Sobolev space $\IW^{1,p}_D(O)$. The reason for this is that the $p$-bound for $L^\frac{1}{2}$ in Corollary~\ref{Cor: square root upper bound} will follow from a weak-type estimate.
Such a decomposition was first shown by Auscher on the whole space~\cite[Appendix~A]{Memoirs}. This idea was refined in~\cite{ABHR,E} to work on domains, including the idea to use Hardy's inequality to include (partial) Dirichlet boundary conditions (and this is the reason why we have investigated Hardy's inequality in the previous subsection).
To formulate the precise result, we introduce $\C^m$-valued Sobolev spaces subject to different boundary conditions in the individual components.

\begin{definition}
	\label{Def: IW_D}
	Let $p\in [1,\infty)$, $\Xi \subseteq \R^d$ open and $E_k \subseteq \cl{\Xi}$ for $k=1,\dots,m$. With the array $\mathbb{E} \coloneqq (E_k)_{k=1}^m$ define the space
	\begin{align}
		\IW^{1,p}_{\mathbb{E}}(\Xi) \coloneqq \bigotimes_{k=1}^m \W^{1,p}_{E_k}(\Xi),
	\end{align}
	equipped with the subspace topology inherited from $\IW^{1,p}(\Xi)$. Moreover, introduce the abbreviation $\|\cdot \|_{\IW^{1,p}(\Xi)}$ for the norm on $\IW^{1,p}_{\mathbb{E}}(\Xi)$.
\end{definition}

\begin{remark}
	Here, we stay slightly more general than is necessary for our application. Compared to the $\L^2$ result used in~\cite{E}, we cannot deal with different Dirichlet boundary parts in different components in Theorem~\ref{Thm: Kato}. This is an artifact of the fact that $O$ might not be a doubling space.
\end{remark}

The main result of this subsection reads as follows.

\begin{theorem}[Sobolev Calder\'{o}n--Zygmund -- open set]
	\label{Thm: CZ}
	Let $O \subseteq \R^d$ be open, $D_k \subseteq \bd O$ be closed and $(d-1)$-regular for $k=1,\dots,m$, such that $O$ is a locally uniform domain near $\bd O \setminus D_k$ for all $k$, and let $1<p<\infty$. Then for every $u\in \IW^{1,p}_\ID(O)$ and every $\alpha > 0$ there exist an (at most) countable index set $J$, a family of cubes $(Q_j)_{j\in J}$, and functions $g,b_j \colon O\to \C^m$ for $j\in J$ such that the following holds.
	\begin{enumerate}
		\item $u=g+\sum_j b_j$ holds pointwise almost everywhere, \label{Item: CZ Sobolev decomposition}
		\item the family $(Q_j)_{j\in J}$ is locally finite, and every $x\in O$ is contained in at most $12^d$ cubes, \label{Item: CZ Sobolev overlap}
		\item $\sum_{j\in J} |Q_j| \lesssim \frac{1}{\alpha^p} \| u \|_{\IW^{1,p}(O)}^p$, \label{Item: CZ Sobolev cube measure sum}
		\item $g\in \Lip(O)^m$ with $\|g\|_{\Lip(O)^m} \lesssim \alpha$, \label{Item: CZ Sobolev good function}
		\item $b_j \in \IW^{1,p}_\ID(O)$ with $\| b_j \|_{\IW^{1,p}(O)} \lesssim \alpha |Q_j|^\frac{1}{p}$ for every $j\in J$, \label{Item: CZ Sobolev bad functions}
		\item if $p<d$, then $b_j \in \L^q(O)^m$ for $q\in [p,p^*]$ with $\| b_j \|_q \lesssim \alpha |Q_j|^{1/p+(1-\theta)/d}$, where $\theta\in [0,1]$ is such that $\nicefrac{1}{q} = \nicefrac{(1-\theta)}{p} + \nicefrac{\theta}{p^*}$, \label{Item: CZ Sobolev bad Lq estimate}
		\item $\|g\|_{\IW^{1,p}(O)} + \| \sum_{j\in J'} b_j \|_{\IW^{1,p}(O)} \lesssim \|u\|_{\IW^{1,p}(O)}$ for all $J' \subseteq J$, \label{Item: CZ Sobolev estimate against u}
		\item $b_j$ is supported in $Q_j\cap O$ for every $j$, \label{Item: CZ Sobolev bad localization}
		\item if $1<q<\infty$, $u\in \IW^{1,q}_\ID(O)$, and $J' \subseteq J$, then $\sum_{j\in J'} b_j$ converges unconditionally in $\IW^{1,q}_\ID(O)$. \label{Item: CZ Sobolev convergence}
	\end{enumerate}
\end{theorem}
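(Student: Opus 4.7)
The plan is to run Auscher's Sobolev--Calder\'{o}n--Zygmund scheme from~\cite[App.~A]{Memoirs}, adapted to mixed boundary conditions as in~\cite{ABHR,E}, with Theorem~\ref{Thm: Hardy inequality} playing the role of the previously unavailable \emph{global} Hardy inequality. First I would extend $u$ componentwise by Proposition~\ref{Prop: extension} (applied to each pair $(O, D_k)$) to $\tilde u \in \IW^{1,p}_\ID(\R^d)$, dropping the tilde, and form the auxiliary function
\begin{align}
	F \coloneqq |u| + |\nabla u| + \sum_{k=1}^{m} \frac{|u^{(k)}|}{\dist_{D_k}},
\end{align}
which lies in $\L^p(\R^d)$ with $\|F\|_p \lesssim \|u\|_{\IW^{1,p}(\R^d)}$ by Theorem~\ref{Thm: Hardy inequality} applied to each coordinate. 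Setting $\Omega_\alpha \coloneqq \{\MO F > \alpha\}$ with $\MO$ the centered Hardy--Littlewood maximal operator, the weak $(p,p)$ inequality immediately gives $|\Omega_\alpha| \lesssim \alpha^{-p} \|u\|_{\IW^{1,p}(O)}^p$.

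Next I would Whitney-decompose $\Omega_\alpha$ into a family $(Q_j)_{j\in J}$ of cubes with disjoint interiors and $\ell(Q_j) \sim \dist(Q_j, \Omega_\alpha^c)$, whose doubles $(2Q_j)$ overlap at most $12^d$ times (this gives~\ref{Item: CZ Sobolev overlap} and, combined with the measure bound above, also~\ref{Item: CZ Sobolev cube measure sum}), and fix a subordinate partition of unity $(\chi_j)$ with $\supp \chi_j \subseteq 2Q_j$, $\sum_j \chi_j = \ind_{\Omega_\alpha}$ and $|\nabla \chi_j| \lesssim 1/\ell(Q_j)$. For each $j$ and each $k \in \{1,\dots,m\}$, I would distinguish two cases: if $4Q_j \cap D_k = \emptyset$, put $b_j^{(k)} \coloneqq \chi_j(u^{(k)} - c_j^{(k)})$ with $c_j^{(k)} \coloneqq \fint_{2Q_j} u^{(k)}$; otherwise put $b_j^{(k)} \coloneqq \chi_j u^{(k)}$. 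The first lies in $\W^{1,p}_{D_k}$ because $\chi_j$ vanishes near $D_k$, and the second because multiplication by a Lipschitz cutoff preserves $\W^{1,p}_{D_k}$. Set $b_j \coloneqq (b_j^{(k)})_k$ and $g \coloneqq u - \sum_j b_j$, where the sum is locally finite by bounded overlap, giving~\ref{Item: CZ Sobolev decomposition} and~\ref{Item: CZ Sobolev bad localization}.

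For each $j$ pick $x_j \in \Omega_\alpha^c$ at distance $\sim \ell(Q_j)$ from $Q_j$, so that $\MO F(x_j) \leq \alpha$ controls all relevant averages over $2Q_j$. In the ``far'' case, Poincar\'{e} on $2Q_j$ combined with this bound yields~\ref{Item: CZ Sobolev bad functions}, while in the ``close'' case the pointwise inequality $1/\ell(Q_j) \lesssim 1/\dist_{D_k}(x)$ on $2Q_j$ (valid because $4Q_j \cap D_k \neq \emptyset$) reduces the troublesome term $\|u^{(k)} \nabla \chi_j\|_{\L^p(2Q_j)}$ to $\|u^{(k)}/\dist_{D_k}\|_{\L^p(2Q_j)}$ and thus to $\alpha |Q_j|^{1/p}$. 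The Sobolev--Poincar\'{e} inequality on $2Q_j$ yields~\ref{Item: CZ Sobolev bad Lq estimate}; \ref{Item: CZ Sobolev estimate against u} and~\ref{Item: CZ Sobolev convergence} follow by summing individual bounds over the bounded-overlap family, with the $q$-case in~\ref{Item: CZ Sobolev convergence} obtained by re-running the same estimates in $\W^{1,q}$ (re-invoking Theorem~\ref{Thm: Hardy inequality} at exponent $q$). Finally,~\ref{Item: CZ Sobolev good function} is verified on $\Omega_\alpha^c$ from $g=u$ and $\MO|\nabla u| \leq \alpha$ there, and on $\Omega_\alpha$ from the explicit form of $g$ in terms of the $c_j^{(k)}$ and $\chi_j$ via a classical computation on Whitney partitions.

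The main obstacle is the ``close'' case $4Q_j \cap D_k \neq \emptyset$: here subtracting a Poincar\'{e} mean would destroy the Dirichlet trace on $D_k$, forcing the choice $b_j^{(k)} = \chi_j u^{(k)}$, which in turn demands a substitute estimate for $\|u^{(k)}\nabla \chi_j\|_{\L^p}$. Incorporating $|u^{(k)}|/\dist_{D_k}$ into $F$ supplies exactly this substitute, but is only legitimate because Theorem~\ref{Thm: Hardy inequality} places $|u^{(k)}|/\dist_{D_k}$ in $\L^p$ under our geometric assumptions---where $D_k$ may be unbounded and $O$ need not be interior-thick. This is the precise step where our setting departs from~\cite{ABHR,E}, whose more restrictive geometries allowed earlier local versions of Hardy to be used instead.
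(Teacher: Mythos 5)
Your proposal implements the same route as the paper: extend $u$ to $\R^d$ via Proposition~\ref{Prop: extension} (this is precisely how the paper reduces Theorem~\ref{Thm: CZ} to the whole-space Lemma~\ref{Lem: CZ}), run Auscher's Sobolev Calder\'{o}n--Zygmund scheme with a far/close dichotomy on the Whitney cubes, and let the global Hardy inequality --- here it should be Lemma~\ref{Lem: Hardy whole space}, not Theorem~\ref{Thm: Hardy inequality}, since the extended function and the level set live on $\R^d$ --- control the term $u^{(k)}\nabla\chi_j$ on cubes where the trace condition forbids subtracting Poincar\'{e} means. This also matches the paper's remark that once Hardy is invoked directly in the estimates, the \enquote{boring cubes} bookkeeping of~\cite{E} becomes superfluous.

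One technical imprecision: with $\Omega_\alpha \coloneqq \{\MO F > \alpha\}$ the bound $\MO F(x_j) \leq \alpha$ for $x_j \in \Omega_\alpha^c$ only controls $\L^1$-averages, $\fint_B F \lesssim \alpha$ on balls reaching $x_j$, which is insufficient for the $\L^p$-norm estimates on $b_j^{(k)}$ (and, in the close case, for~\ref{Item: CZ Sobolev bad Lq estimate}), all of which need $\fint_{2Q_j} F^p \lesssim \alpha^p$. The standard fix is to set $\Omega_\alpha \coloneqq \{\MO(F^p) > \alpha^p\}$; the weak $(1,1)$ inequality applied to $F^p \in \L^1(\R^d)$ still yields~\ref{Item: CZ Sobolev cube measure sum}, and $\MO(F^p)(x_j) \leq \alpha^p$ then genuinely controls the $\L^p$-averages over $2Q_j$. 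Apart from this, and the harmless relabeling $Q_j \mapsto 2Q_j$ needed to align~\ref{Item: CZ Sobolev bad localization} with $\supp \chi_j$, the argument is sound and follows the paper's strategy.
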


Let us point out the differences with~\cite[Lem.~7.2]{E}. The most important difference is that we allow unbounded constellations in which $O$ is only locally uniform and $D$ does only fulfill a local $d-1$-dimensionality condition. Property~\ref{Item: CZ Sobolev bad Lq estimate} follows from an additional application of the Poincar\'{e} inequality and will be used for the case $d=2$ in Lemma~\ref{Lem: weak-type} later on. The bound for $J' \neq J$ in~\ref{Item: CZ Sobolev estimate against u} follows from an inspection of the proof, the same is true for the case $q\neq p$ in~\ref{Item: CZ Sobolev convergence}. The two last-mentioned properties are useful to circumvent convergence issues. These appear since we deal with operators that are a priori only bounded with respect to the $\L^2$ topology, and we cannot benefit from embedding relations as is the case of bounded domains.

In virtue of Proposition~\ref{Prop: extension}, Theorem~\ref{Thm: CZ} is an easy consequence of the following whole-space version. In particular, this shows that the only geometric ingredients are the availability of a Sobolev extension operator and Assumption~\ref{Ass: D'}.

\begin{lemma}[Sobolev Calder\'{o}n--Zygmund -- whole space]
	\label{Lem: CZ}
	Let $D_k \subseteq \R^d$ be closed and $(d-1)$-regular for $k=1,\dots,m$, and let $1<p<\infty$. For every $u\in \IW^{1,p}_\ID(\R^d)$ and every $\alpha>0$ there exist an (at most) countable index set $J$, a family of cubes $(Q_j)_{j\in J}$ and functions $g,b_j: \R^d \to \C^m$ for $j\in J$ such that the following holds.
	\begin{enumerate}
		\item $u=g+\sum_j b_j$ holds pointwise almost everywhere, \label{Item: CZ Sobolev Rd decomposition}
		\item the family $(Q_j)_j$ is locally finite, and every $x\in \R^d$ is contained in at most $12^d$ cubes, \label{Item: CZ Sobolev Rd overlap}
		\item $\sum_j |Q_j| \lesssim \frac{1}{\alpha^p} \|u\|_{\IW^{1,p}(\R^d)}^p$, \label{Item: CZ Sobolev Rd cube measure sum}
		\item $g\in \IW^{1,\infty}(\R^d)$ with $\|g\|_{\IW^{1,\infty}(\R^d)} \lesssim \alpha$, \label{Item: CZ Sobolev Rd good function}
		\item $b_j\in \IW^{1,p}_\ID(\R^d)$ with $\|b_j\|_{\IW^{1,p}(\R^d)} \lesssim \alpha |Q_j|^\frac{1}{p}$ for every $j$, \label{Item: CZ Sobolev Rd bad functions}
		\item $\|g\|_{\IW^{1,p}(\R^d)} + \| \sum_{j\in J'} b_j \|_{\IW^{1,p}(\R^d)} \lesssim \|u\|_{\IW^{1,p}(\R^d)}$ for all $J' \subseteq J$, \label{Item: CZ Sobolev Rd estimate against u}
		\item $b_j$ is compactly supported in $Q_j$ for every $j$, \label{Item: CZ Sobolev Rd bad localization}
		\item if $1<q<\infty$, $u\in \IW^{1,q}_\ID(\R^d)$ and $J' \subseteq J$, then $\sum_{j\in J'} b_j$ converges unconditionally in $\IW^{1,q}_\ID(\R^d)$. \label{Item: CZ Sobolev Rd convergence}
	\end{enumerate}
\end{lemma}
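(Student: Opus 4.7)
The plan is to adapt the Sobolev Calder\'{o}n--Zygmund decomposition from~\cite[Appendix~A]{Memoirs} and~\cite[Lem.~7.2]{E} to the whole-space setting with mixed boundary conditions, using Lemma~\ref{Lem: Hardy whole space} to absorb each Dirichlet part $D_k$. First I would introduce the Hardy-augmented function
\begin{align}
\Phi(x) \coloneqq |u(x)|^p + |\nabla u(x)|^p + \sum_{k=1}^m \left(\frac{|u_k(x)|}{\dist_{D_k}(x)}\right)^p,
\end{align}
and define the bad set $\Omega \coloneqq \{ M\Phi > \alpha^p \}$, where $M$ denotes the Hardy--Littlewood maximal operator. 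The weak-type $(1,1)$ bound for $M$ combined with Lemma~\ref{Lem: Hardy whole space} applied componentwise yields $|\Omega| \lesssim \alpha^{-p} \|u\|_{\IW^{1,p}(\R^d)}^p$, producing~\ref{Item: CZ Sobolev Rd cube measure sum} and forcing $\Omega \neq \R^d$. A standard Whitney decomposition then provides cubes $(Q_j)_{j \in J}$ with overlap constant $12^d$ (item~\ref{Item: CZ Sobolev Rd overlap}), side length $\ell(Q_j) \approx \dist(Q_j, \R^d \setminus \Omega)$, and a subordinate smooth partition of unity $(\chi_j)_j$ with $\supp \chi_j \subseteq Q_j$, $|\nabla \chi_j| \lesssim \ell(Q_j)^{-1}$, and $\sum_j \chi_j = \ind_\Omega$.

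Next I would set, for each pair $(j,k)$, $c_j^k \coloneqq \fint_{Q_j} u_k \d x$ if $2Q_j \cap D_k = \emptyset$ and $c_j^k \coloneqq 0$ otherwise, and then define $b_j \coloneqq ((u_k - c_j^k)\chi_j)_{k=1}^m$ together with $g \coloneqq u - \sum_j b_j$. Items~\ref{Item: CZ Sobolev Rd decomposition} and~\ref{Item: CZ Sobolev Rd bad localization} are then immediate. Membership $b_j \in \IW^{1,p}_\ID(\R^d)$ (part of~\ref{Item: CZ Sobolev Rd bad functions}) follows componentwise: if $c_j^k = 0$, then $b_j^k = u_k \chi_j$ inherits the Dirichlet trace from $u_k$; if $c_j^k$ is the average, then $\chi_j$ has compact support away from $D_k$, so both $u_k \chi_j$ and $c_j^k \chi_j$ lie in $\W^{1,p}_{D_k}(\R^d)$.

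The heart of the argument is a pointwise-at-Whitney estimate. For each $j$ there is a point $y_j \notin \Omega$ at distance $\lesssim \ell(Q_j)$ from $Q_j$, so evaluating $M\Phi$ at $y_j$ on a cube $Q_j^\sharp$ of side length comparable to $\ell(Q_j)$ containing both $Q_j$ and $y_j$ gives $\fint_{Q_j^\sharp} \Phi \lesssim \alpha^p$; whence each of $\|u_k\|_{\L^p(Q_j^\sharp)}$, $\|\nabla u_k\|_{\L^p(Q_j^\sharp)}$, and $\|u_k/\dist_{D_k}\|_{\L^p(Q_j^\sharp)}$ is bounded by $\alpha |Q_j|^{1/p}$. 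Together with $|c_j^k| \lesssim \alpha$ this yields $\|b_j^k\|_{\L^p} \lesssim \alpha |Q_j|^{1/p}$, and for $\nabla b_j^k = \chi_j \nabla u_k + (u_k - c_j^k) \nabla \chi_j$ the first summand is handled directly while the second uses the Poincar\'{e} inequality on $Q_j$ (if $c_j^k$ is the average) or the inequality $\dist_{D_k}(x) \lesssim \ell(Q_j)$ on $Q_j$ together with the Hardy term in $\Phi$ (if $c_j^k = 0$). This delivers~\ref{Item: CZ Sobolev Rd bad functions}. The Lipschitz bound~\ref{Item: CZ Sobolev Rd good function} for $g$ is standard once $|u(x)| \leq \alpha$ for a.e.\ $x \in \R^d \setminus \Omega$ (Lebesgue differentiation) and $|c_j^k| \lesssim \alpha$ uniformly, and~\ref{Item: CZ Sobolev Rd estimate against u} as well as the unconditional convergence~\ref{Item: CZ Sobolev Rd convergence} follow from the finite-overlap inequality $|\sum_{J'} b_j|^q \lesssim \sum_{J'} |b_j|^q$ combined with $\sum_j \|b_j\|_{\IW^{1,p}}^p \lesssim \|u\|_{\IW^{1,p}}^p$. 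For the case $q \neq p$ in~\ref{Item: CZ Sobolev Rd convergence}, the $b_j$-bounds must instead be produced by a purely local Poincar\'{e}/Hardy estimate on $Q_j^\sharp$, without any Whitney-maximal control.

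The main obstacle is that an unbounded geometry is combined with mixed boundary conditions in which each component $u_k$ carries its own Dirichlet part $D_k$: the choice of $c_j^k$ must be made per pair $(j,k)$ so as to be simultaneously compatible with the componentwise Dirichlet constraint, with the local Poincar\'{e}/Hardy estimate underlying the $\IW^{1,p}$-bound on $b_j$, and with a uniform constant in Hardy's inequality, which is exactly what Lemma~\ref{Lem: Hardy whole space} supplies. A secondary difficulty is~\ref{Item: CZ Sobolev Rd convergence}, where the decomposition is built at the $\IW^{1,p}$-level but convergence is demanded in $\IW^{1,q}$; this forces the second line of reasoning above, in which the $b_j$-estimates are re-derived locally in $\IW^{1,q}$ from Poincar\'{e}/Hardy alone.
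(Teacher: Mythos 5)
Your proposal is essentially the same as the paper's: the paper does not spell out the details of Lemma~\ref{Lem: CZ} but explicitly says to follow the construction in~\cite[Lem.~7.2]{E}, replacing the auxiliary bounded domain $B$ and the local Hardy inequality there by a direct whole-space argument that uses Lemma~\ref{Lem: Hardy whole space}. That is precisely what you do: Hardy-augmented $\Phi$, bad set via the maximal function, Whitney decomposition, componentwise constants $c_j^k$ chosen to be averages or zero according to whether a dilate of $Q_j$ meets $D_k$, and the Hardy term absorbing the zero-constant case. Your two-type classification of cubes (average versus zero constant) also corresponds to the paper's remark that the \emph{boring cubes} from~\cite{E} can be dispensed with, since the Hardy bound for bad functions can be recovered a posteriori from the global inequality.

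Two minor bookkeeping points. First, you cannot have simultaneously $\supp\chi_j\subseteq Q_j$ for (essentially disjoint) Whitney cubes and $\sum_j\chi_j=\ind_\Omega$; the standard fix is to take a partition of unity subordinate to slight dilates of the dyadic Whitney cubes and then relabel those dilates as the $Q_j$ of the statement, which produces the $12^d$ overlap constant and the compact support in~\ref{Item: CZ Sobolev Rd bad localization}. Second, for~\ref{Item: CZ Sobolev Rd convergence} it is worth emphasizing, as you correctly note, that the cubes, cutoffs, and constants $c_j^k$ are fixed once and for all from the $p$-level construction, and the $q$-estimate $\|b_j\|_{\IW^{1,q}}\lesssim \|u\|_{\L^q(Q_j^\sharp)}+\|\nabla u\|_{\L^q(Q_j^\sharp)}+\sum_k\|u_k/\dist_{D_k}\|_{\L^q(Q_j^\sharp)}$ is re-derived purely from local Poincar\'e and $\dist_{D_k}\lesssim\ell(Q_j)$, after which bounded overlap and Lemma~\ref{Lem: Hardy whole space} close the argument — without any appeal to $M\Phi$ at the $q$-level.
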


The proof is similar to those in~\cite[Lem.~7.2]{E} or~\cite[Lem.~7.1]{ABHR}. Indeed, in~\cite{E} they start with a function $u$ on $\Omega$ and decompose $U \coloneqq \eta (\Ext u)$, where $\Ext$ is an extension operator for $\Omega$ and $\eta$ is a cutoff function that is constantly $1$ on a ball $B$ that compactly contains $\Omega$. This way, they construct a global decomposition of $U$, but can rely on a Hardy's inequality on $B$ with Dirichlet boundary condition on $D \cup \bd B$. It is crucial for them that the auxiliary domain and Dirichlet boundary part are still bounded. That being said, the central insight to show Lemma~\ref{Lem: CZ} is to follow their lines of argument, but directly work with a global function $U$ and the global Hardy inequality established in Lemma~\ref{Lem: Hardy whole space}. We would like to point out that the order in which extension operator and Hardy's inequality are applied is reversed in the approach presented in this article.

\begin{remark}[boring cubes]
	We would likewise use the opportunity to mention that the so-called \emph{boring cubes} used in~\cite{E} are not needed. Indeed, they were introduced in~\cite{E} to treat Hardy-type estimates for bad functions directly in the construction. However, this bound can be deduced a posteriori using Hardy's inequality. With this in mind, bad functions on boring cubes should be defined like bad functions on \emph{usual cubes} and not like bad functions on \emph{special cubes}. Gradient and non-gradient estimates for bad functions on usual cubes then apply directly to bad functions on \enquote{boring cubes}. This shortens and conceptually simplifies the proof given in~\cite{E}.
\end{remark}

\subsection{Upper bound for the square root when $p<2$}
\label{Subsec: weak-type argument}

In the case $p<2$, we prove $p$-boundedness of the square root. The heart of the matter is the weak-type estimate in Lemma~\ref{Lem: weak-type}. A crucial observation is that we gain up to one Sobolev exponent in comparison to $p_{-}(L)$. We will benefit from this in Section~\ref{Sec: Endpoint cases} later on.

\begin{lemma}
\label{Lem: weak-type}
	Let $p_{-}(L)_* \vee 1 < q < p < 2$, then one has for all $\alpha>0$ the weak-type bound
	\begin{align}
	\label{Eq: weak type estimate}
		\bigl|\bigl\{ x\in O \colon |(L^{\frac{1}{2}} u)(x)| > \alpha \bigr\}\bigr| \lesssim \frac{1}{\alpha^p} \|u\|_{\IW^{1,p}(O)}^p \qquad (u\in \IW^{1,p}_D(O) \cap \IW^{1,2}_D(O)),
	\end{align}
	where implicit constants depend on $p$, coefficient bounds, and, if $p^* < 2$, on $q$ and $\pLB{q^*}$.
\end{lemma}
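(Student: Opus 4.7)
The plan is to follow the weak-type strategy originating with Auscher~\cite{Memoirs} and adapted to mixed boundary conditions in~\cite{ABHR, E}; the task is to verify that the argument survives in the more general geometric setting of this article and, crucially, to track the role of integrability when $p^{*} < 2$. Given $u \in \IW^{1,p}_D(O) \cap \IW^{1,2}_D(O)$ and $\alpha > 0$, I would first apply the Sobolev Calder\'{o}n--Zygmund decomposition (Theorem~\ref{Thm: CZ}) at level $\alpha$ to obtain $u = g + \sum_j b_j$ with cubes $(Q_j)_{j \in J}$ of sidelength $r_j$, and set $t_j := r_j^2$. Introducing the exceptional set $E^* := \bigcup_j 4 Q_j$, property~\ref{Item: CZ Sobolev cube measure sum} yields $|E^*| \lesssim \alpha^{-p} \|u\|_{\IW^{1,p}(O)}^p$, and the union bound
\begin{align}
\bigl|\{|L^{1/2}u| > \alpha\}\bigr| \leq |E^*| + \bigl|\{|L^{1/2}g| > \alpha/2\}\bigr| + \bigl|\{x \in O \setminus E^* : |L^{1/2}b(x)| > \alpha/2\}\bigr|,
\end{align}
with $b := \sum_j b_j$, reduces the proof to controlling the latter two pieces.

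For the good piece, Kato's theorem (Theorem~\ref{Thm: Kato}) combined with Chebyshev gives $|\{|L^{1/2}g| > \alpha/2\}| \lesssim \alpha^{-2} \|g\|_{\IW^{1,2}(O)}^2$, which by the interpolation $\|g\|_{\IW^{1,2}}^2 \leq \|g\|_{\IW^{1,\infty}}^{2-p} \|g\|_{\IW^{1,p}}^p$ together with the Lipschitz bound~\ref{Item: CZ Sobolev good function} and property~\ref{Item: CZ Sobolev estimate against u} is controlled by $\alpha^{-p} \|u\|_{\IW^{1,p}}^p$.

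For the bad piece I would use Chebyshev in $\L^1$: the per-cube bound $\|L^{1/2} b_j\|_{\L^1(O \setminus 4 Q_j)} \lesssim \alpha |Q_j|$ would sum to $\|L^{1/2} b\|_{\L^1(O \setminus E^*)} \lesssim \alpha^{1-p} \|u\|_{\IW^{1,p}}^p$ by property~\ref{Item: CZ Sobolev cube measure sum}, and the final factor of $\alpha^{-1}$ from Chebyshev closes the bound. To prove the per-cube estimate, I would use the sectorial representation $L^{1/2} = \pi^{-1/2} \int_0^\infty L \e^{-sL} s^{-1/2} \d s$ and split the integral at $s = t_j$, decomposing each $L^{1/2} b_j$ into a near and a far piece. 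Each is estimated shell by shell on $C_k(Q_j) := 2^{k+1} Q_j \setminus 2^k Q_j$ via $\L^2$--Cauchy--Schwarz, leveraging the Gaussian-type $\L^2$ off-diagonal estimates for $\{s L \e^{-sL}\}$ from Proposition~\ref{Prop: Gaffney estimates}. For the near piece, the divergence form of $L$ allows integration by parts to place a derivative on $b_j$, at which point property~\ref{Item: CZ Sobolev bad functions} contributes the gradient bound $\|\nabla b_j\|_p \lesssim \alpha |Q_j|^{1/p}$. Summing the resulting geometric series in $k$ and then in $j$ yields the claim.

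The main obstacle is the case $p^* < 2$: the $\L^p$-integrability of $b_j$ on $Q_j$ no longer suffices, because the shell estimates needed to bring down the non-gradient contribution require $\L^p \to \L^2$ off-diagonal bounds that are simply unavailable at this integrability. This is where the hypothesis $q > p_{-}(L)_* \vee 1$ enters. By Sobolev embedding (property~\ref{Item: CZ Sobolev bad Lq estimate}) one upgrades the integrability of $b_j$ from $\L^p$ to $\L^{q^*}$ at the price of a power of $|Q_j|$, and the condition $q^* \in (p_{-}(L), 2)$ is precisely what Proposition~\ref{Prop: bdd to ODE} needs in order to furnish $\L^{q^*} \to \L^2$ off-diagonal estimates for $\{\e^{-zL}\}$, which by composition yield $\L^{q^*} \to \L^2$ off-diagonal bounds for $\{s L \e^{-sL}\}$. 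This two-step integrability upgrade is exactly what is needed to reach all $p > p_{-}(L)_* \vee 1$ and explains the appearance of $\pLB{q^*}$ in the constants in this regime.
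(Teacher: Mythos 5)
Your overall skeleton is right — apply the Sobolev Calder\'{o}n--Zygmund decomposition, handle the good part via Kato's theorem plus $\L^\infty$--$\L^p$ interpolation, and attack the bad part via the sectorial formula for $L^{1/2}$ split at the cube scale. You also correctly identified that in the regime $p^*<2$ one must use property~\ref{Item: CZ Sobolev bad Lq estimate} to trade a power of $|Q_j|$ for higher integrability, and that this is where $\pLB{q^*}$ enters the constants. That much matches the paper.

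The genuine gap is in the \emph{far} (global) piece. You propose to estimate both the near and the far integral shell by shell over $C_k(Q_j)=2^{k+1}Q_j\setminus 2^k Q_j$ using $\L^2$ (or $\L^r\to\L^2$) off-diagonal estimates and then sum. This works for the near piece, because $s\lesssim r_j^2$ and $\dist(C_k(Q_j),Q_j)\approx 2^k r_j$ make the Gaussian factor $\e^{-c\,\dist^2/s}\lesssim\e^{-c4^k}$, which kills the shell sum. For the far piece, $s\geq r_j^2$ ranges up to $\infty$, so the Gaussian factor tends to $1$ and contributes no decay in $k$; what remains after the $s$-integration is only a polynomial factor of order $(2^k)^{d/r'-1}$ before taking the $\L^1(C_k)$ norm, which diverges when summed over $k$ as soon as $d\geq 3$ (and $r$ is not tiny). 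In other words, the bad functions $b_j$ are \emph{not} mean-zero and the global tail of the semigroup simply does not decay off a cube; off-diagonal localization is the wrong tool there. The paper (following Egert and ultimately Auscher) resolves this by a completely different mechanism: the global integral is rewritten as $\sum_k f(4^kL)\sum_{j\in J_k}2^{-k}b_j$ for a suitable $f\in\H^\infty(\Sec_\varphi)$, and is bounded by an $\L^r$ Tchebychev step combined with a vertical square function estimate for the $\H^\infty$-calculus of $L$ on $\L^r$ (condition~\ref{Item: weak-2} in the paper's proof), together with the bounded overlap of the cubes and property~\ref{Item: CZ Sobolev cube measure sum}. This square-function / functional-calculus step exploits the almost-orthogonality between dyadic scales and has no per-cube, shell-by-shell analogue; without it the argument does not close.

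Two smaller discrepancies worth flagging. First, the \enquote{integration by parts to place a derivative on $b_j$} in the near piece is not how the paper argues, and it is not clear the manipulation is legitimate: $L\e^{-sL}b_j=-\operatorname{div}(A\nabla\e^{-sL}b_j)+\ldots$ transfers the derivative to $\e^{-sL}b_j$, not to $b_j$, and there is no commutation identity to move it onto $b_j$ for non-constant coefficients. The paper instead uses $\L^r\to\L^2$ off-diagonal bounds for $\{tL\e^{-tL}\}_{t>0}$ (obtained by composing Proposition~\ref{Prop: bdd to ODE} with the $\L^2$ bounds of Proposition~\ref{Prop: Gaffney estimates}) together with the $\L^r$ bound on $b_j$ from property~\ref{Item: CZ Sobolev bad Lq estimate}; the gradient bound~\ref{Item: CZ Sobolev bad functions} does not appear in Step 3. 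Second, because $O$ may be unbounded and $J$ infinite, the paper first reduces to finite $J$ via property~\ref{Item: CZ Sobolev convergence} and a truncation $\int_{2^{-n}}^{\cdot}$ of the sectorial integral; your write-up silently rearranges an infinite sum inside $L^{1/2}$, which needs to be justified in this setting.
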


The proof of Lemma~\ref{Lem: weak-type} is similar to that of~\cite[Prop.~8.1]{E} and we will only explain the necessary changes, so the reader is strongly advised to keep a copy of that article handy. A key difference is that the argument in~\cite{E} assumes $p^* < 2$, which is not feasible in $d=2$.\footnote{After publication, the author of~\cite{E} resolved this problem in the arXiv version by a case distinction. We present a unified treatment here.} Property~\ref{Item: CZ Sobolev bad Lq estimate} in our Calder\'{o}n--Zygmund decomposition lets us circumvent this issue.

\begin{proof}
	To start with, we claim that there exists $r\in [p, p^*]$ satisfying
	\begin{enumerate}[label=(\alph*)]
		\item $\{ \e^{-tL} \}_{t>0}$ satisfies $\L^r \to \L^2$ off-diagonal estimates, \label{Item: weak-1}
		\item the $\H^\infty(\Sec_\varphi)$-calculus of $L$ is $\L^r$-bounded for all $\varphi\in (\omega,\pi)$, \label{Item: weak-2}
		\item $r\leq 2$. \label{Item: weak-3}
	\end{enumerate}

	Note that, of course,~\ref{Item: weak-3} is necessary for~\ref{Item: weak-1} to hold, but we will also need to use~\ref{Item: weak-3} in conjunction with~\ref{Item: weak-2}. Indeed, if $p^* \geq 2$, chose $r=2$, which is admissible by Proposition~\ref{Prop: Gaffney estimates} and the Crouzeix--Delyon theorem. Otherwise, when $p^*<2$, let
	\begin{align}
		p_{-}(L)_* \vee 1 < q < s_* < r_* < p < r.
	\end{align}
	Then, on the one hand, $p_{-}(L) < s < r$. On the other hand, by assumption of this case, $r < p^* < 2$. Hence, $\{ \e^{-tL} \}_{t>0}$ satisfies $\L^r \to \L^2$ off-diagonal estimates by Proposition~\ref{Prop: bdd to ODE}, and the $\H^\infty(\Sec_\varphi)$-calculus of $L$ is bounded on $\L^r$ for any $\varphi \in (\omega, \pi)$ according to Proposition~\ref{Prop: Hoo}. Since $\pLB{s} \leq \pLB{q^*}$ by interpolation with the contraction semigroup on $\L^2$, implied constants depend on $p$, $q$, $\pLB{q^*}$, $\varphi$, and coefficient bounds. Eventually, the dependence on $\varphi$ will be replaced by a dependence on $\omega$, which in turn is under control using the coefficients bounds. Note also that we have $r\in [p, p^*]$.

	Now let $\alpha > 0$, $u\in \IW^{1,p}_D(O)$, and let $u=g+\sum_j b_j$ be the Calder\'{o}n--Zygmund decomposition from Theorem~\ref{Thm: CZ}. Refer with~\ref{Item: CZ Sobolev Rd decomposition}-\ref{Item: CZ Sobolev Rd convergence} to the respective properties of the decomposition. As in~\cite{E}, the proof divides into 4 steps: Estimate of the good part, decomposition of the bad part into a local and a global integral, estimate of the local integral, and bound for the global integral.

	\textbf{Step 1}: Handling the good part. Decompose with the Calder\'{o}n--Zygmund decomposition
	\begin{align}
		\bigl|\bigl\{ |L^{\frac{1}{2}} u| > \alpha \bigr\}\bigr| \leq \bigl|\bigl\{ |L^{\frac{1}{2}} g| > \nicefrac{\alpha}{2} \bigr\}\bigr| + \bigl|\bigl\{ |L^{\frac{1}{2}} \sum_{j\in J} b_j| > \nicefrac{\alpha}{2} \bigr\}\bigr|.
	\end{align}
	We refer to the first term as the \emph{good part} and to the second term as the \emph{bad part}. With the exact same arguments as in~\cite[Step~1]{E}, we conclude
	\begin{align}
		\bigl|\bigl\{ |L^{\frac{1}{2}} g| > \nicefrac{\alpha}{2} \bigr\}\bigr| \lesssim \frac{1}{\alpha^p} \| u \|_{\IW^{1,p}(O)}^p.
	\end{align}

	\textbf{Step 2}: Decomposition of the bad part. First of all, let us mention that it suffices to assume that $J$ is finite, provided we can show a bound that does not depend on the size of $J$. This allows to rearrange terms without worrying about convergence issues. Indeed, put $J_n \coloneqq J \cap \{ 1, \dots, n \}$ for $n\geq 1$, then $\sum_{j\in J_n} b_j \to b = \sum_{j\in J} b_j$ in $\IW^{1,2}_D(O)$ as $n\to \infty$ by~\ref{Item: CZ Sobolev convergence}, so we get from Tchebychev's inequality
	\begin{align}
	\begin{split}
	\label{Eq: reduction to finite J}
		\bigl|\bigl\{ |L^{\frac{1}{2}} b| > \nicefrac{\alpha}{2} \bigr\}\bigr| &\leq \bigl|\bigl\{ |L^{\frac{1}{2}} (b-\sum_{j\in J_n} b_j)| > \nicefrac{\alpha}{4} \bigr\}\bigr| + \bigl|\bigl\{ |L^{\frac{1}{2}} \sum_{j\in J_n} b_j| > \nicefrac{\alpha}{4} \bigr\}\bigr| \\
		&\leq \frac{16}{\alpha^2} \bigl\| L^{\frac{1}{2}} \bigl(b-\sum_{j\in J_n} b_j \bigr) \bigr\|_2^2 + \bigl|\bigl\{ |L^{\frac{1}{2}} \sum_{j\in J_n} b_j| > \nicefrac{\alpha}{4} \bigr\}\bigr|.
	\end{split}
	\end{align}
	Now, if the second term can be controlled by
	\begin{align}
		\bigl|\bigl\{ |L^{\frac{1}{2}} \sum_{j\in J_n} b_j| > \nicefrac{\alpha}{4} \bigr\}\bigr| \lesssim \frac{1}{\alpha^p} \| u \|_{\IW^{1,2}(O)}^p
	\end{align}
	with an implicit constant independent of $n$, then, in the light of Theorem~\ref{Thm: Kato} and by convergence of $\sum_{j\in J_n} b_j$ to $b$ in $\IW^{1,2}_D(O)$, the first term vanishes as $n\to \infty$, which lets us conclude.

	Furthermore, to control $|\{ |L^{\frac{1}{2}} b| > \nicefrac{\alpha}{2} \}|$, it suffices as in~\cite[Step~2]{E} to control the \emph{local} and \emph{global integrals}
	\begin{align}
		\Bigl|\Bigl\{ \bigl|\sum_j \int_{2^{-n}}^{r_j \vee 2^{-n}} L \e^{-t^2 L} b_j \d t \bigr| > \frac{\sqrt{\pi} \alpha}{8} \Bigr\}\Bigr| + \Bigl|\Bigl\{ \bigl|\sum_j \int_{r_j \vee 2^{-n}}^\infty L \e^{-t^2 L} b_j \d t\bigr| > \frac{\sqrt{\pi} \alpha}{8} \Bigr\}\Bigr|
	\end{align}
	for all $n\geq 1$, where $r_j = 2^\ell$ with $\ell$ the unique integer such that $2^\ell \leq \ell_j \leq 2^{\ell+1}$, and where $\ell_j$ is the sidelength of $Q_j$.

	\textbf{Step 3}: Handling the local integral. Put $\gamma \coloneqq d(\nicefrac{1}{r}-\nicefrac{1}{2})+1$ and $C_k(Q_j) \coloneqq 2^{k+1} Q_j \setminus 2^k Q_j$ for $k \geq 2$. We only show the bound
	\begin{align}
	\label{Eq: weak step 3 crucial estimate}
		\Bigl\| \int_{2^{-n}}^{r_j \vee 2^{-n}} L\e^{-tL^2} b_j \d t \Bigr\|_{\L^2(C_k(Q_j)\cap O)^m} \lesssim \alpha \ell_j^{d/2} 2^{-k\gamma} \e^{-c4^k}
	\end{align}
	for all $k\geq 2$, $j\in J$, and $n\geq 1$. Then the bound for the local integral can be concluded as in~\cite[Step~3]{E}.

	So, let us show~\eqref{Eq: weak step 3 crucial estimate}. Clearly, we can assume $r_j \geq 2^{-n}$. Note that the off-diagonal bounds for $\{ \e^{-tL} \}_{t>0}$ can be upgraded to off-diagonal bounds for $\{ tL \e^{-tL} \}_{t>0}$ by composition (see for example~\cite[Prop.~4.4~(iv)]{E}), and this imports no further dependence for the implied constants. With this in hand, and using the support property~\ref{Item: CZ Sobolev Rd bad localization}, we calculate for the integrand of~\eqref{Eq: weak step 3 crucial estimate} that
	\begin{align}
		\| L \e^{-t^2L} b_j \|_{\L^2(C_k(Q_j)\cap O)^m} \lesssim t^{-d\bigl(\frac{1}{r}-\frac{1}{2}\bigr)-2} \e^{-c 4^{k-1} r_j^2 / t^2} \| b_j \|_r.
	\end{align}
	Plugging this back into~\eqref{Eq: weak step 3 crucial estimate} and using~\ref{Item: CZ Sobolev bad Lq estimate} leads to
	\begin{align}
		\Bigl\| \int_{2^{-n}}^{r_j} L\e^{-t^2L} b_j \d t \Bigr\|_{\L^2(C_k(Q_j)\cap O)^m} &\leq \int_{2^{-n}}^{r_j} \| L\e^{-t^2L} b_j \|_{\L^2(C_k(Q_j)\cap O)^m} \d t \\
		&\lesssim \alpha \ell_j^{\frac{d}{p} + 1-\theta} \int_{2^{-n}}^{r_j} t^{-d \bigl( \frac{1}{r} - \frac{1}{2} \bigr) - 1} \e^{-c 4^{k-1} r_j^2 / t^2} \dd{t}.
		\intertext{Using the substitution $s=4^{k-1} r_j^2 / t^2$, we obtain}
		&\lesssim \alpha \ell_j^{\frac{d}{p} + 1-\theta} r_j^{-\gamma} 2^{-(k-1) \gamma} \int_{4^{k-1}}^\infty s^{\frac{\gamma}{2}} \e^{-cs} \dd{s}.
		\intertext{Keeping in mind $\ell_j \approx r_j$ and observing $\theta=d\bigl(\nicefrac{1}{p}-\nicefrac{1}{r}\bigr)$, the prefactor reduces to $\ell_j^\frac{d}{2}$. Then we split the exponential term and use $s\geq 4^{k-1}$ to find}
		&\lesssim \alpha \ell_j^\frac{d}{2} 2^{-(k-1) \gamma} \e^{-c 4^{k-2}} \int_0^\infty s^{\frac{\gamma}{2}} \e^{-\nicefrac{cs}{2}} \dd{s}.
	\end{align}
	The integral in $s$ is finite since $\gamma\geq 1>0$. This completes the proof of~\eqref{Eq: weak step 3 crucial estimate}.

	\textbf{Step 4}: Estimate for the global integral. To this end, put $J_k \coloneqq \{ j\in J \colon r_j \vee 2^{-n} = 2^k \}$ for any integer $k$. Then, as in~\cite[Step~4]{E}, there exists a function $f$ which belongs to $\H^\infty(\Sec_\varphi)$ for any $\varphi\in (\omega,\nicefrac{\pi}{2})$, with which we can write
	\begin{align}
		\sum_{j\in J} \int_{r_j}^\infty L \e^{-t^2 L} b_j \d t = \sum_k \sum_{j\in J_k} \frac{1}{2^k} f(4^k L) b_j.
	\end{align}
	Plug this back into the definition of the global integral and use Tchebychev's inequality (compared to~\cite{E}, we apply it with $r$ instead of $p^*$), followed by linearity (here, we use that $J$ is supposed to be finite) to derive
	\begin{align}
		\Bigl|\Bigl\{ \bigl|\sum_j \int_{r_j}^\infty L \e^{-t^2 L} b_j \d t \bigr| > \frac{\sqrt{\pi} \alpha}{8} \Bigr\}\Bigr| &\lesssim \frac{1}{\alpha^r} \Bigl\| \sum_k \sum_{j\in J_k} 2^{-k} f(4^k L) b_j \Bigr\|_r^r \\
		&= \frac{1}{\alpha^r} \Bigl\| \sum_k  f(4^k L) \sum_{j\in J_k} 2^{-k} b_j \Bigr\|_r^r.
	\end{align}
	Now, use the square function estimate from~\cite[Lem.~8.2]{E} with $\varphi \in (\omega, \nicefrac{\pi}{2})$, which is justified by~\ref{Item: weak-2}, to give
	\begin{align}
		&\Bigl\| \sum_k  f(4^k L) \sum_{j\in J_k} 2^{-k} b_j \Bigr\|_r^r \lesssim \Bigl\| \Bigl( \sum_k  \bigl| \sum_{j\in J_k} 2^{-k} b_j \bigr|^2 \Bigr)^\frac{1}{2} \Bigr\|_r^r.
		\intertext{Write out the $\L^r$-norm and employ~\ref{Item: weak-3} to give}
		={} &\int_O \Bigl( \sum_k  \bigl| \sum_{j\in J_k} 2^{-k} b_j \bigr|^2 \Bigr)^\frac{r}{2} \d x \leq \int_O \sum_k  \Bigl( \sum_{j\in J_k} |2^{-k} b_j| \Bigr)^r \d x.
		\intertext{Continuing as in~\cite{E}, we arrive at}
		\lesssim{} &\sum_{j\in J} \ell_j^{-r} \int_O |b_j|^r \d x.
	\end{align}
	Plug this back in the calculation and use~\ref{Item: CZ Sobolev bad Lq estimate} followed by~\ref{Item: CZ Sobolev Rd cube measure sum} to conclude (recall $\theta=d\bigl(\nicefrac{1}{p}-\nicefrac{1}{r}\bigr)$)
	\begin{align}
		\Bigl|\Bigl\{ \bigl|\sum_{j\in J} \int_{r_j}^\infty L \e^{-t^2 L} b_j \d t \bigr| > \frac{\sqrt{\pi} \alpha}{8} \Bigr\}\Bigr| \lesssim \sum_{j\in J} |Q_j|^{-\frac{r}{d} + \frac{r}{p} + (1-\theta)\frac{r}{d}} = \sum_{j\in J} |Q_j| \lesssim \alpha^{-p} \| u \|_{\IW^{1,p}(O)}^p.
	\end{align}
	Combining the bounds for the good part, the local integral and the global integral, this gives~\eqref{Eq: weak type estimate}.
\end{proof}

Using interpolation, we conclude a $p$-bound for $L^\frac{1}{2}$. The interpolation theory used in~\cite{ABHR,E} does not apply as $O$ might not be locally doubling.

\begin{corollary}
\label{Cor: square root upper bound}
	Let $p_{-}(L)_* \vee 1 < q < p < 2$, then
	\begin{align}
		\| L^{\frac{1}{2}} u \|_p \lesssim \| u \|_{\IW^{1,p}(O)} \qquad (u\in \IW^{1,p}_D(O) \cap \IW^{1,2}_D(O)),
	\end{align}
	where the implicit constant depends on $p$, $q$, $\omega$, coefficient bounds, and, if $p^* < 2$, on $\pLB{q^*}$.
\end{corollary}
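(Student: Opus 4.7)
The plan is a Marcinkiewicz-type interpolation between the weak-type estimates of Lemma~\ref{Lem: weak-type}, applied at exponents $r_0, r_1$ with $p_{-}(L)_* \vee 1 < r_0 < p < r_1 < 2$. Since $O$ is not assumed locally doubling, we cannot directly invoke the interpolation of $\L^p$-Sobolev spaces on $O$ used in~\cite{ABHR, E}; instead, we carry out the argument \enquote{by hand} using the Sobolev Calder\'{o}n--Zygmund decomposition of Theorem~\ref{Thm: CZ}, and transfer to $\R^d$ via Proposition~\ref{Prop: extension} when a maximal-function argument is needed.

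For $u \in \IW^{1,p}_D(O) \cap \IW^{1,2}_D(O)$, the layer-cake identity reduces matters to bounding $\int_0^\infty \alpha^{p-1} \bigl|\bigl\{ |L^{\frac{1}{2}} u| > \alpha \bigr\}\bigr| \d \alpha$ by a multiple of $\| u \|_{\IW^{1,p}(O)}^p$. For each $\alpha > 0$, decompose $u = g_\alpha + b_\alpha$ via Theorem~\ref{Thm: CZ} at level~$\alpha$, split the superlevel set by the triangle inequality, and apply the weak-$(r_1, r_1)$ bound of Lemma~\ref{Lem: weak-type} to $g_\alpha$ and the weak-$(r_0, r_0)$ bound to $b_\alpha$. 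For the bad part, items~\ref{Item: CZ Sobolev bad functions} and~\ref{Item: CZ Sobolev bad localization} combined with H\"{o}lder's inequality on each cube yield $\| b_j \|_{\IW^{1,r_0}(O)} \lesssim \alpha |Q_j|^{1/r_0}$, and hence $\| b_\alpha \|_{\IW^{1,r_0}(O)}^{r_0} \lesssim \alpha^{r_0} \sum_j |Q_j|$. For the good part, item~\ref{Item: CZ Sobolev good function} together with the identity $g_\alpha = u$ off the union of CZ cubes and the pointwise bound $|u| + |\nabla u| \lesssim \alpha$ there (implicit in the maximal-function construction of the CZ good set) give $\| g_\alpha \|_{\IW^{1,r_1}(O)}^{r_1} \lesssim \int_{\{|u|+|\nabla u| \leq c \alpha\}}(|u|^{r_1}+|\nabla u|^{r_1}) \d x + \alpha^{r_1} \sum_j |Q_j|$.

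Inserting these bounds into the layer-cake integral and swapping integrals by Fubini, the good-part contribution collapses, by the standard Marcinkiewicz Fubini computation (which converges because $r_1 > p$), to a multiple of $\int_O (|u| + |\nabla u|)^p \d x \lesssim \| u \|_{\IW^{1,p}(O)}^p$, and both remaining pieces reduce to $\int_0^\infty \alpha^{p-1} \sum_j |Q_{j, \alpha}| \d \alpha$. This last integral is dominated, after extending via Proposition~\ref{Prop: extension} and invoking the $\L^p$-boundedness of the Hardy--Littlewood maximal operator on $\R^d$ (which requires $p > 1$), by $\| u \|_{\IW^{1,p}(O)}^p$. The main obstacle is precisely the convergence of these Fubini integrals: it requires the pointwise maximal-function characterization of the CZ good set, which is implicit in the proof of Theorem~\ref{Thm: CZ} but must be extracted explicitly here. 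This is also the step where local doubling of $O$ would normally enter, and we circumvent it by performing the maximal-function estimate on $\R^d$ rather than $O$.
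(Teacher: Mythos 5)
Your route is genuinely different from the paper's. The paper's proof is a short abstract real interpolation argument: it applies Lemma~\ref{Lem: weak-type} at a single exponent $r = (p+q)/2$ to get $L^{\frac{1}{2}}\colon \IW^{1,r}_D(O) \to \L^{r,\infty}(O)^m$, pairs this with the strong $\L^2$-bound from Theorem~\ref{Thm: Kato}, and then identifies the real interpolation space $(\IW^{1,r}_D(O), \IW^{1,2}_D(O))_{\theta,2}$ with $\IW^{1,p}_D(O)$ by appealing to the whole-space interpolation result of~\cite[Thm.~1.2]{IP} (which needs Assumption~\ref{Ass: D'}) together with retraction--coretraction via Proposition~\ref{Prop: extension}. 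You instead carry out Marcinkiewicz interpolation \enquote{by hand} between two weak-type bounds at $r_0<p<r_1<2$, re-using the Calder\'{o}n--Zygmund decomposition of Theorem~\ref{Thm: CZ} explicitly and controlling $\int_0^\infty \alpha^{p-1}\sum_j|Q_{j,\alpha}|\d\alpha$ by the maximal operator on $\R^d$. What the paper's route buys is brevity and modularity at the cost of importing the nontrivial Sobolev interpolation theory from~\cite{IP}; what your route buys is self-containment (you avoid that external interpolation identity entirely), at the cost of redoing a Fubini/layer-cake computation and reaching into the internals of the CZ construction.

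Two points of caution on your version. First, as you acknowledge, the Fubini step needs the quantitative fact that $\sum_j|Q_{j,\alpha}|$ is comparable to the measure of the superlevel set of a Hardy--Littlewood maximal function of $|\Ext u|+|\nabla\Ext u|$; item~\ref{Item: CZ Sobolev cube measure sum} alone only gives a fixed-$\alpha$ bound, which is insufficient under $\int_0^\infty \alpha^{p-1}(\cdots)\d\alpha$, so this must indeed be extracted from the Whitney construction in the proof of Lemma~\ref{Lem: CZ} rather than from the statement. Second, and this is a gap you don't flag: to invoke Lemma~\ref{Lem: weak-type} at exponent $r_1$ on $g_\alpha$ you need $g_\alpha\in \IW^{1,r_1}_D(O)\cap\IW^{1,2}_D(O)$, and in particular the \emph{boundary condition} in the $\W^{1,r_1}$-sense. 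Neither $g_\alpha\in\Lip(O)^m$ (item~\ref{Item: CZ Sobolev good function}) nor $g_\alpha\in\IW^{1,p}_D(O)$ gives this for free when $r_1>p$, since $\W^{1,p}_D\cap\W^{1,r_1}\subseteq\W^{1,r_1}_D$ is not automatic and $g_\alpha$ has no compact support to make a cutoff argument work (it does work for each $b_j$, so $b_\alpha\in\IW^{1,r_0}_D(O)$ is fine). The cleanest repair is to replace the weak-$(r_1,r_1)$ bound on the good part by the strong $\L^2$-estimate from Theorem~\ref{Thm: Kato} combined with Tchebychev (as is done inside the proof of Lemma~\ref{Lem: weak-type} itself), which only needs $g_\alpha\in\IW^{1,2}_D(O)$ — a membership that does hold since $u, b_\alpha\in\IW^{1,2}_D(O)$ by~\ref{Item: CZ Sobolev convergence}. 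The Fubini integral for the good part then still converges because $p<2$.
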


\begin{proof}
	Put $r \coloneqq \nicefrac{(p+q)}{2}$. Write $\L^{r,\infty}(O)$ for the usual weak $\L^r$-space on $O$. Recall that $\L^{r,\infty}(O)$ is a complete quasi-normed space. Owing to Lemma~\ref{Lem: weak-type}, $L^{\frac{1}{2}}$ extrapolates from $\IW^{1,r}_D(O)\cap \IW^{1,2}_D(O)$ to a bounded operator $$L^{\frac{1}{2}} \colon \IW^{1,r}_D(O) \to \L^{r,\infty}(O)^m.$$ Here, the implied constant depends on $p$, $q$, $\omega$, coefficient bounds, and, if $p^* < 2$, on $\pLB{q^*}$. Moreover, by Theorem~\ref{Thm: Kato}, we have
	\begin{align}
		L^{\frac{1}{2}} \colon \IW^{1,2}_D(O) \to \L^2(O)^m.
	\end{align}
	Chose $\theta\in (0,1)$ such that $\nicefrac{1}{p} = \nicefrac{(1-\theta)}{r} + \nicefrac{\theta}{2}$. Real interpolation yields an extension
	\begin{align}
		L^{\frac{1}{2}} \colon \bigl(\IW^{1,r}_D(O), \IW^{1,2}_D(O)\bigr)_{\theta,2} \to \bigl(\L^{r,\infty}(O)^m, \L^2(O)^m\bigr)_{\theta,2}.
	\end{align}
	It remains to determine the interpolation spaces. For this, it suffices to argue componentwise~\cite[Sec.~1.18.1]{Triebel}. This being said, the space on the right-hand side then coincides with $\L^p(O)^m$ according to~\cite[Sec.~1.18.6]{Triebel}. For the left-hand side, this follows from the whole space case in~\cite[Thm.~1.2]{IP} (here, we need Assumption~\ref{Ass: D'}) together with the retraction-coretraction principle~\cite[Sec.~1.2.4]{Triebel} applied with $R$ the pointwise restriction to $O$ and $S$ the extension operator from Proposition~\ref{Prop: extension}.
\end{proof}

\section{Extrapolation of the square root property}
\label{Sec: proof main result}

In this part, we prove Theorem~\ref{Thm: main result}~\ref{Item: main result 1}. Most of the work in the case $p<2$ has already been done in Sections~\ref{Subsec: Hoo and Riesz} and~\ref{Subsec: weak-type argument}, so we mainly focus on the case $p>2$.

Consider Figure~\ref{Fig: Decomposition L}. Lemmas~\ref{Lem: extrapolation L} and~\ref{Lem: extrapolation square root} make precise the following heuristic: If two out of three arrows in the diagram correspond to $p$-isomorphisms, then all arrows are $p$-isomorphisms.
\begin{figure}[H]
\centering
\[ \begin{tikzcd}
        \IW^{1,2}_D(O) \ar{dr}{L^{\frac{1}{2}}} \ar{dd}{\cL} & \\
        & L^2(O)^m \ar{dl}{L^{\frac{1}{2}}} \\
        \IW^{-1,2}_D(O) &
\end{tikzcd}\]
\caption{Decomposition of $\cL$ into two square roots}
\label{Fig: Decomposition L}
\end{figure}
If $\cL$ is a $p$-isomorphism, then Lemma~\ref{Lem: iso gives bdd semigroup} and the case $p<2$ yield that the lower $L^\frac{1}{2}$-arrow comes for free. This leads to the proof of Theorem~\ref{Thm: main result}~\ref{Item: main result 1} in the case $p>2$. In the same spirit, but using Lemma~\ref{Lem: necessary condition}, we are going to show necessity in the next section.

\begin{lemma}
\label{Lem: extrapolation L}
	Let $p\in (1,\infty)$ be such that $L^{\frac{1}{2}}$ is a $p$-isomorphism and $(L^*)^{\frac{1}{2}}$ is a $p'$-isomorphism. Then $\cL$ is a $p$-isomorphism, and implicit constants depend only on those in the assumptions.
\end{lemma}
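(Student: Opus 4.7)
The plan is to exploit the decomposition $\cL = L^{\frac{1}{2}} \circ L^{\frac{1}{2}}$ (seen as composition $\IW^{1,2}_D(O) \to \L^2(O)^m \to \IW^{-1,2}_D(O)$) together with the representation formula from Lemma~\ref{Lem: cL inverse repr}. Concretely, I would show the $p$-bound for $\cL^{-1}$ on the intersection $\IW^{-1,p}_D(O) \cap \IW^{-1,2}_D(O)$, which, combined with the automatic $p$-boundedness of $\cL$ itself (see Remark~\ref{Rem: q tilde plus}), yields that $\cL$ is a $p$-isomorphism.

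First, fix $u \in \IW^{-1,p}_D(O) \cap \IW^{-1,2}_D(O)$ and $f \in \L^{p'}(O)^m \cap \L^2(O)^m$. Using Lemma~\ref{Lem: cL inverse repr},
\begin{align}
 |(L^{\frac{1}{2}} \cL^{-1} u \SP f)_2| = |\langle u, (L^*)^{-\frac{1}{2}} f \rangle| \leq \| u \|_{\IW^{-1,p}_D(O)} \, \| (L^*)^{-\frac{1}{2}} f \|_{\IW^{1,p'}_D(O)}.
\end{align}
The hypothesis that $(L^*)^{\frac{1}{2}}$ is a $p'$-isomorphism means that $(L^*)^{-\frac{1}{2}}$ extends to a bounded map $\L^{p'}(O)^m \to \IW^{1,p'}_D(O)$, so the right-hand side is $\lesssim \|u\|_{\IW^{-1,p}_D(O)} \|f\|_{p'}$. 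Since $L^{\frac{1}{2}} \cL^{-1} u \in \L^2(O)^m$ and $f$ ranges over a dense subspace of $\L^{p'}(O)^m$, duality yields $L^{\frac{1}{2}} \cL^{-1} u \in \L^p(O)^m$ with $\| L^{\frac{1}{2}} \cL^{-1} u \|_p \lesssim \|u\|_{\IW^{-1,p}_D(O)}$, and the implicit constant depends only on the $p'$-isomorphism constant of $(L^*)^{\frac{1}{2}}$.

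Now, $L^{\frac{1}{2}} \cL^{-1} u$ sits in $\L^p(O)^m \cap \L^2(O)^m$, so the hypothesis that $L^{\frac{1}{2}}$ is a $p$-isomorphism lets us apply the bounded extension of $L^{-\frac{1}{2}}\colon \L^p(O)^m \to \IW^{1,p}_D(O)$. Since on $\L^2(O)^m$ this extension agrees with the $\L^2$-based inverse, $L^{-\frac{1}{2}}(L^{\frac{1}{2}} \cL^{-1} u) = \cL^{-1} u$, and we conclude
\begin{align}
 \| \cL^{-1} u \|_{\IW^{1,p}_D(O)} \lesssim \| L^{\frac{1}{2}} \cL^{-1} u \|_p \lesssim \| u \|_{\IW^{-1,p}_D(O)}.
\end{align}
The two implicit constants depend only on the $p$- and $p'$-isomorphism constants in the hypotheses, which gives precisely the stated dependence. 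The only delicate point (and what makes Remark~\ref{Rem: q tilde plus} important) is to confirm that the bounded extension thus produced coincides with the one demanded in the definition of $p$-isomorphism for $\cL$, but by construction it agrees with the Lax--Milgram inverse on $\IW^{-1,p}_D(O) \cap \IW^{-1,2}_D(O)$, which is exactly what is required.
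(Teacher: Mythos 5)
Your proof is correct and follows essentially the same route as the paper: reduce via Remark~\ref{Rem: q tilde plus} to bounding $\cL^{-1}$ on the intersection, invoke Lemma~\ref{Lem: cL inverse repr} and the $p'$-isomorphism of $(L^*)^{\frac{1}{2}}$ through a duality pairing against $f\in \L^{p'}(O)^m\cap \L^2(O)^m$ to obtain $\|L^{\frac{1}{2}}\cL^{-1}u\|_p\lesssim \|u\|_{\IW^{-1,p}_D(O)}$, and then post-compose with $L^{-\frac{1}{2}}$ using the $p$-isomorphism hypothesis. The closing remark on compatibility of the extended inverse with the Lax--Milgram inverse is a useful explicit confirmation of what the paper handles implicitly via Remark~\ref{Rem: q tilde plus}.
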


A similar correspondence appeared in~\cite[Thm.~6.5]{DtER}. They work with realizations of $L$ in $\L^p$ that rely on the fact that on bounded domains $\L^q \subseteq \L^p$ if $q \geq p$. In particular, compatibility is never an issue. Our simple argument using compatible extensions extends also to the unbounded setting.

\begin{proof}
	To show the $p$-isomorphism property we only have to show that $\cL^{-1}$ is $\IW^{-1,p}_D(O) \to \IW^{1,p}_D(O)$ bounded on $\IW^{-1,p}_D(O) \cap \IW^{-1,2}_D(O)$ in the case of $\cL$, compare with Remark~\ref{Rem: q tilde plus}. Hence, let $u \in \IW^{-1,p}_D(O) \cap \IW^{-1,2}_D(O)$, and also let $h \in \L^{p'}(O)^m \cap \L^2(O)^m$. Calculate first using Lemma~\ref{Lem: cL inverse repr} and the $p'$-isomorphism property for $L^*$ that
	\begin{align}
		|(L^\frac{1}{2} \cL^{-1} u \SP h)| = |\langle u, (L^*)^{-\frac{1}{2}} h \rangle| \leq \| u \|_{\IW^{-1,p}_D(O)}  \| (L^*)^{-\frac{1}{2}} h \|_{\IW^{1,p'}_D(O)}  \lesssim \| u \|_{\IW^{-1,p}_D(O)} \| h \|_{p'}.
	\end{align}
	Taking the supremum over $h$ yields
	\begin{align}
	\label{Eq: intermediate p estimate}
		\| L^\frac{1}{2} \cL^{-1} u \|_p \lesssim \| u \|_{\IW^{-1,p}_D(O)}.
	\end{align}
	Implicit constants depend on the implied constants coming from the $p'$-isomorphism assumption. Write $\cL^{-1} = L^{-\frac{1}{2}} L^\frac{1}{2} \cL^{-1}$ and use the $p$-isomorphism hypothesis along with~\eqref{Eq: intermediate p estimate} to give
	\begin{align}
		\| \cL^{-1} u \|_{\IW^{1,p}_D(O)} = \| L^{-\frac{1}{2}} L^\frac{1}{2} \cL^{-1} u \|_{\IW^{1,p}_D(O)} \lesssim \| L^\frac{1}{2} \cL^{-1} u \|_p \lesssim \| u \|_{\IW^{-1,p}_D(O)}. &\qedhere
	\end{align}

\end{proof}

The proof of the following lemma is similar, so we omit its proof.

\begin{lemma}
\label{Lem: extrapolation square root}
	Let $p\in (1,\infty)$ be such that $\cL$ is a $p$-isomorphism and $(L^*)^{\frac{1}{2}}$ is a $p'$-isomorphism. Then $L^{\frac{1}{2}}$ is a $p$-isomorphism, and implicit constants depend only on those in the assumptions.
\end{lemma}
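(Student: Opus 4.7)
The plan is to mirror the strategy of Lemma~\ref{Lem: extrapolation L} but with the roles played by the $p$-isomorphism hypothesis exchanged: now $\cL$ replaces $L^{\frac{1}{2}}$ as the hypothesis, while $(L^*)^{\frac{1}{2}}$ stays in the game to provide the necessary duality estimates via Lemma~\ref{Lem: cL inverse repr}. In accordance with Remark~\ref{Rem: q tilde plus}, it suffices to verify that $L^{\frac{1}{2}}$ extends from $\IW^{1,p}_D(O) \cap \IW^{1,2}_D(O)$ to a bounded operator $\IW^{1,p}_D(O) \to \L^p(O)^m$ and that $L^{-\frac{1}{2}}$ extends from $\L^p(O)^m \cap \L^2(O)^m$ to a bounded operator $\L^p(O)^m \to \IW^{1,p}_D(O)$.

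First, I reuse the auxiliary estimate from the previous proof: for $u \in \IW^{-1,p}_D(O) \cap \IW^{-1,2}_D(O)$ and any $h \in \L^{p'}(O)^m \cap \L^2(O)^m$, Lemma~\ref{Lem: cL inverse repr} and the $p'$-isomorphism assumption on $(L^*)^{\frac{1}{2}}$ yield
\begin{align}
    |(L^{\frac{1}{2}} \cL^{-1} u \SP h)_2| = |\langle u, (L^*)^{-\frac{1}{2}} h \rangle| \leq \|u\|_{\IW^{-1,p}_D(O)} \, \|(L^*)^{-\frac{1}{2}} h\|_{\IW^{1,p'}_D(O)} \lesssim \|u\|_{\IW^{-1,p}_D(O)} \|h\|_{p'},
\end{align}
hence $\|L^{\frac{1}{2}} \cL^{-1} u\|_p \lesssim \|u\|_{\IW^{-1,p}_D(O)}$. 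Combining with the automatic boundedness $\cL : \IW^{1,p}_D(O) \to \IW^{-1,p}_D(O)$ recorded in Remark~\ref{Rem: q tilde plus}, for $u \in \IW^{1,p}_D(O) \cap \IW^{1,2}_D(O)$ I write $L^{\frac{1}{2}} u = (L^{\frac{1}{2}} \cL^{-1})(\cL u)$ and obtain $\|L^{\frac{1}{2}} u\|_p \lesssim \|u\|_{\IW^{1,p}_D(O)}$, which is the first of the two required estimates.

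For the second estimate, the point is to factor $L^{-\frac{1}{2}} = \cL^{-1} \circ L^{\frac{1}{2}}$, using the $p$-isomorphism assumption on $\cL$. To make sense of $L^{\frac{1}{2}} u$ in $\IW^{-1,p}_D(O)$ for $u \in \L^p(O)^m \cap \L^2(O)^m$, I define it by duality: for $w \in \IW^{1,p'}_D(O) \cap \IW^{1,2}_D(O)$,
\begin{align}
    |\langle L^{\frac{1}{2}} u, w \rangle| = |(u \SP (L^*)^{\frac{1}{2}} w)_2| \leq \|u\|_p \, \|(L^*)^{\frac{1}{2}} w\|_{p'} \lesssim \|u\|_p \, \|w\|_{\IW^{1,p'}_D(O)},
\end{align}
where the last step invokes the $p'$-isomorphism assumption on $(L^*)^{\frac{1}{2}}$. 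This produces $\|L^{\frac{1}{2}} u\|_{\IW^{-1,p}_D(O)} \lesssim \|u\|_p$, and a short computation starting from $\cL L^{-\frac{1}{2}} u = L^{\frac{1}{2}} u$ in $\IW^{-1,2}_D(O)$ (valid by the Kato property and since $L^{-\frac{1}{2}} u \in \IW^{1,2}_D(O) = \dom(\cL)$) shows this extension is compatible with the $\IW^{-1,2}_D(O)$-element $\cL L^{-\frac{1}{2}} u$. Then the $p$-isomorphism of $\cL$ gives $\|L^{-\frac{1}{2}} u\|_{\IW^{1,p}_D(O)} = \|\cL^{-1} L^{\frac{1}{2}} u\|_{\IW^{1,p}_D(O)} \lesssim \|L^{\frac{1}{2}} u\|_{\IW^{-1,p}_D(O)} \lesssim \|u\|_p$, which is the desired estimate.

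The only delicate point is the compatibility step in the second estimate: one must verify that the duality-defined extension of $L^{\frac{1}{2}}$ to $\L^p(O)^m$ agrees on $\L^p(O)^m \cap \L^2(O)^m$ with $\cL L^{-\frac{1}{2}}$ as elements of $\IW^{-1,2}_D(O)$, so that applying $\cL^{-1}$ really returns $L^{-\frac{1}{2}} u$. This is the same kind of compatibility observation that motivates the definition of $\cJ(L)$ in Remark~\ref{Rem: q tilde plus} and is the only reason why the argument differs from a trivial rearrangement of Lemma~\ref{Lem: extrapolation L}; once established, tracking constants gives the asserted dependence.
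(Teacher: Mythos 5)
Your proof is correct and is the natural mirror image of the paper's proof of Lemma~\ref{Lem: extrapolation L}, which is precisely what the author means when omitting the proof as \enquote{similar}: the forward bound $\|L^{\frac{1}{2}}u\|_p\lesssim\|u\|_{\IW^{1,p}_D(O)}$ comes from the same auxiliary estimate $\|L^{\frac{1}{2}}\cL^{-1}\cdot\|_p\lesssim\|\cdot\|_{\IW^{-1,p}_D(O)}$ (derived from Lemma~\ref{Lem: cL inverse repr} and the $p'$-isomorphism of $(L^*)^{\frac{1}{2}}$) composed with the automatic boundedness of $\cL$, and the backward bound comes from factoring through $\cL^{-1}$ using the $p$-isomorphism hypothesis. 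The compatibility verification you flag — that the duality-defined $\L^p$-extension of $u\mapsto\cL L^{-\frac{1}{2}}u$ agrees with the $\L^2$-realization via $a(L^{-\frac{1}{2}}u,w)=(L^{\frac{1}{2}}L^{-\frac{1}{2}}u\SP(L^*)^{\frac{1}{2}}w)_2=(u\SP(L^*)^{\frac{1}{2}}w)_2$ — is exactly the kind of argument contained in the proof of Lemma~\ref{Lem: cL inverse repr}, so the whole argument hangs together with the intended constant dependence.
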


To verify in the proof of Theorem~\ref{Thm: main result}~\ref{Item: main result 1} that Lemma~\ref{Lem: extrapolation square root} is applicable, we will rely on the following lemma. Recall the notation $2^{[j]}$ for iterated Sobolev exponents.

\begin{lemma}
\label{Lem: iso gives bdd semigroup}
	Let $2 \leq p < r < \tilde q_{+}(L)^*$. Then $p\in \cI(L)$, and $\pLB{p}$ depends only on $p$, $r$, $\qLB{r_*}$, coefficient bounds, and dimensions. In particular, $p_{+}(L) \geq \tilde q_{+}(L)^*$.
\end{lemma}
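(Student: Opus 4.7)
The plan is to apply Proposition~\ref{Prop: hypercontractive to bdd} with $q=\rho_*$ and target $r$, where $\rho_* := (r_*)_*$, by establishing that $\{e^{-tL}\}_{t>0}$ is $L^{\rho_*} \to L^{r}$ bounded in the natural scaling of Definition~\ref{Def: boundedness and ODE}. The range $r \leq 2^*$ is already covered by Corollary~\ref{Cor: size of semigroup interval}, so the substance lies in $r > 2^*$, equivalently $r_* > 2$.

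The core bootstrap: for $u \in L^{\rho_*} \cap L^2$, set $v_t := e^{-tL}u \in \dom(L)$. Since $L$ is the operator associated with $a$, one has $\cL v_t = L v_t$ in $\IW^{-1,2}_D$, and hence $v_t = \cL^{-1}(L v_t)$. Composing with the $r_*$-isomorphism of $\cL$ supplied by the hypothesis together with the Sobolev embeddings $\IW^{1,r_*}_D \hookrightarrow L^r$ and $L^{\rho_*} \hookrightarrow \IW^{-1,r_*}_D$ (the latter from $\IW^{1,(r_*)'}_D \hookrightarrow L^{(\rho_*)'}$ via the identity $((r_*)')^* = (\rho_*)'$), one obtains
\begin{align}
\|v_t\|_r \lesssim \qLB{r_*}\, \|L e^{-tL} u\|_{\rho_*}.
\end{align}
Since $\rho_* > 2_* \geq p_-(L)$ by Corollary~\ref{Cor: size of semigroup interval}, the semigroup is $L^{\rho_*}$-analytic by Proposition~\ref{Prop: Hoo}, yielding $\|L e^{-tL} u\|_{\rho_*} \lesssim t^{-1}\|u\|_{\rho_*}$. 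A direct computation gives $d/2\bigl(1/\rho_* - 1/r\bigr) = 1$, so $t^{-1}$ is precisely the natural $L^{\rho_*} \to L^r$ scaling, and Proposition~\ref{Prop: hypercontractive to bdd} delivers $L^p$-boundedness for every $p \in (\rho_*, r)$.

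When $r \leq 2^{[2]}$, we have $\rho_* \leq 2$, so $(\rho_*, r) \supseteq [2, r)$ and this single application concludes. For $r > 2^{[2]}$, one must fill the missing range $p \in [2, \rho_*]$ by iterating the same bootstrap along the sequence $\rho_k := (r_*)_{*^{(k-1)}}$. Each such $\rho_k$ lies in $(2, r_*) \cap \cJ(L)$, and complex interpolation of $\cL^{-1}$ between the Lax--Milgram endpoint $\rho = 2$ and the hypothesis endpoint $\rho = r_*$ controls the intermediate constants $\qLB{\rho_k}$ in terms of $\qLB{r_*}$ and coefficient bounds alone. The iteration extends the $L^p$-bounded range downward, terminating in a dimension-dependent number of steps once the lower Sobolev iterate falls below $2$; a final application at $\rho = 2$ (Lax--Milgram) then catches the neighborhood of $p = 2$. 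The main obstacle is propagating $L^{(\rho_k)_*}$-analyticity at every step, which is automatic because the stopping rule forces $(\rho_k)_* > 2_* \geq p_-(L)$ throughout, so Proposition~\ref{Prop: Hoo} applies at each stage and the final constant inherits the stated dependence on $p$, $r$, $\qLB{r_*}$, coefficient bounds, and dimensions.
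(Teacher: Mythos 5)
Your bootstrap estimate is exactly the one in the paper: using $\e^{-tL}u = \cL^{-1}\cL\e^{-tL}u$, the $\cL$-isomorphism at an intermediate exponent, and Sobolev embeddings on both sides to gain two Sobolev exponents, the scaling $\tfrac{d}{2}(1/\rho_* - 1/r)=1$ checks out, and the reduction of $\qLB{\rho_k}$ to $\qLB{r_*}$ by interpolation is fine. However, the iteration is run in the wrong direction, and the claim that the $\L^{(\rho_k)_*}$-boundedness of the functional calculus is ``automatic'' is genuinely incorrect.

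The issue is in the appeal to Proposition~\ref{Prop: Hoo}. That proposition gives boundedness of the calculus on $\L^{s}$ only when $s\in(q,2)\cup(2,q)$ for some $q\in(p_-(L),p_+(L))$. The inequality $(\rho_k)_* > 2_* \geq p_-(L)$ only rules out trouble at the lower endpoint. As soon as $(\rho_k)_* > 2$ — which happens at the very first step $\rho_1 = r_*$ of your downward iteration whenever $r > 2^{[2]}$, since then $(\rho_1)_* = \rho_* > 2$ — one additionally needs some $q\in\bigl((\rho_k)_*,\,p_+(L)\bigr)$, i.e.\ $p_+(L) > (\rho_k)_*$. This is precisely the information the lemma is trying to establish, so the downward iteration is circular at its first step. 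The fix is to run the iteration bottom-up, as the paper does: it sets $p_0\in[2,2^*)$, invokes Corollary~\ref{Cor: size of semigroup interval} to start, and proves the implication $p_j\in\cI(L)\Rightarrow p_{j+1}\in\cI(L)$, where the hypothesis $p_j\in\cI(L)$ (with $(r_{j+1})_{**}<p_j$) supplies exactly the upper endpoint control $p_+(L)\geq p_j>(r_{j+1})_{**}$ needed to invoke Proposition~\ref{Prop: Hoo} at that stage. Your ``final application at $\rho=2$'' should actually be the \emph{first} application, and the step with $\rho=r_*$ should be the last. With that reordering your argument becomes the paper's; as written, the justification of analyticity at each stage does not hold.
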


\begin{proof}
	First of all, we can assume that $p\geq 2^*$ in the light of Corollary~\ref{Cor: size of semigroup interval}. Let $\ell \geq 1$ denote the largest integer such that $2^{[\ell]} \leq p < 2^{[\ell+1]}$. Moreover, we can assume that $r \in (p, 2^{[\ell+1]})$, since otherwise we can replace $r$ by some exponent $s$ in this interval, and $s$ and $\qLB{s}$ depend only on $p$, $r$, and $\qLB{r}$, in virtue of interpolation.

	Let $p_j$ and $r_j$, $j=0, \dots, \ell$, denote the sequences of numbers satisfying $p_\ell = p$, $r_\ell = r$, and $p_{j+1} = p_j^*$ as well as $r_{j+1} = r_j^*$ for $j=0, \dots, \ell - 1$.

	For $j=0, \dots, \ell - 1$, we claim the following. Assume that $p_j \in \cI(L)$. Then $p_{j+1} \in \cI(L)$ and $\pLB{p_{j+1}}$ depends only on $p_j$, $r_j$, $r$, $\pLB{p_j}$, $\qLB{r}$, coefficient bounds, and dimensions.

	Indeed, observe first that $(r_{j+1})_{**} = (r_j)_* < 2^{[j]} \leq p_j$, and that $(r_{j+1})_{**} \geq (r_1)_{**} > (p_1)_{**} \geq 2_*$. Hence, by Proposition~\ref{Prop: Hoo} (taking Corollary~\ref{Cor: size of semigroup interval} into account), for some $\varphi \in (0,\nicefrac{\pi}{2}-\omega)$ depending only on coefficient bounds and dimensions, the $\H^\infty(\Sec_\varphi)$-calculus of $L$ is bounded on $\L^{(r_{j+1})_{**}}$, with implied constant depending only on $r_j$, $p_j$, $\pLB{p_j}$, coefficient bounds, and dimensions. Second, note that $\cL^{-1}$ is $(r_{j+1})_*$-bounded due to $2 \leq (p_1)_* < (r_{j+1})_* \leq r_*$, the assumption on $r$, and interpolation. The $(r_{j+1})_*$-bound for $\cL^{-1}$ only imports $r$, and $\qLB{r_*}$ as a further dependence. With these two ingredients, estimate for $t>0$ and $u\in \L^p(O)^m \cap \L^2(O)^m$ that
	\begin{align}
	\begin{split}
	\label{Eq: hypercontractive in iteration argument}
		\| \e^{-tL} u \|_{r_{j+1}} &\lesssim \| \cL^{-1} \cL \e^{-tL} u \|_{\IW^{1,(r_{j+1})_*}_D(O)} \lesssim \| \cL \e^{-tL} u \|_{\IW^{-1,(r_{j+1})_*}_D(O)} \\
		&\lesssim t^{-1} \| t L \e^{-tL} u \|_{(r_{j+1})_{**}} \lesssim t^{-1} \| u \|_{(r_{j+1})_{**}}.
	\end{split}
	\end{align}
	Implicit constants depend only on the aforementioned quantities. Estimate~\eqref{Eq: hypercontractive in iteration argument} means that the family $\{ \e^{-tL} \}_{t>0}$ is $\L^{(r_{j+1})_{**}} \to \L^{r_{j+1}}$ bounded. Now, Proposition~\ref{Prop: hypercontractive to bdd} yields $p_{j+1} \in \cI(L)$ with the claimed control over $\pLB{p_{j+1}}$. This completes the proof of the claim.

	Now, $2 \leq p_0 < 2^*$, so according to Corollary~\ref{Cor: size of semigroup interval}, $p_0 \in \cI(L)$ with $\pLB{p_0}$ depending only on $p_0$, coefficient bounds, and dimensions. Hence, by induction over $j=0,\dots,\ell-1$, the claim yields $p=p_\ell\in \cI(L)$. By finiteness of the sequence, and since $p_j$ and $r_j$ only depend on $p$, $r$, and $j$, the quantity $\pLB{p}$ depends only on $p$, $r$, $\qLB{r_*}$, coefficient bounds and dimensions, as claimed.
\end{proof}

\begin{proof}[Proof of Theorem~\ref{Thm: main result}~\ref{Item: main result 1}]
	The case $p_{-}(L) < r < p < 2$ is immediate by Corollary~\ref{Cor: square root upper bound} and Proposition~\ref{Prop: Riesz}.

	Now, let $2 < q < r < \tilde q_{+}(L)$. By definition, $\cL$ is a $q$-isomorphism. Then Lemma~\ref{Lem: extrapolation square root} lets us conclude provided we can ensure that $(L^*)^{\frac{1}{2}}$ is a $q'$-isomorphism. To this end, we want to appeal to the first case above, but applied to $L^*$ instead of $L$. By duality, $p_{-}(L^*) < q' < 2$ if, and only if, $2< q < p_{+}(L)$. But this is true by Lemma~\ref{Lem: iso gives bdd semigroup}, since $\cL$ is moreover an $r$-isomorphism. Hence, we can indeed apply the case above. Note that $\qLB{q}$ is controlled by $\qLB{r}$ in virtue of interpolation, which gives the right constant dependencies.
\end{proof}

\section{Necessary conditions}
\label{Sec: necessary conditions}

We show Theorem~\ref{Thm: main result}~\ref{Item: main result 2}. For its proof we will need the following lemma, whose proof is similar to that of Lemma~\ref{Lem: iso gives bdd semigroup}.

\begin{lemma}
\label{Lem: necessary condition}
	Let $p\in (2,\infty)$ be such that $L^{\frac{1}{2}}$ is a $p_*$-isomorphism. Then $p_{+}(L) \geq p$.
\end{lemma}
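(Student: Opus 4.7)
The plan is to replicate the iterative scheme of Lemma~\ref{Lem: iso gives bdd semigroup}, but with the two-Sobolev-step lift $\cL^{-1}\colon \IW^{-1,s}_D(O) \to \IW^{1,s}_D(O)$ replaced by the one-Sobolev-step lift $L^{-\frac{1}{2}}\colon \L^s(O)^m \to \IW^{1,s}_D(O)$ coming from the isomorphism property. Interpolating the assumed $p_*$-isomorphism with the Kato $2$-isomorphism of Theorem~\ref{Thm: Kato}, $L^{\frac{1}{2}}$ is an $s$-isomorphism for every $s \in [\min(2,p_*), \max(2,p_*)]$. The cases $d=2$ and $p \leq 2^*$ follow directly from Corollary~\ref{Cor: size of semigroup interval}, so I would henceforth assume $d \geq 3$ and $p > 2^*$, so in particular $p_* > 2$.

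I would prove by induction on $j \geq 1$ that $p_+(L) \geq \min(p, 2^{[j]})$, the base case $j=1$ being Corollary~\ref{Cor: size of semigroup interval}. For the inductive step, under the hypothesis $p_+(L) \geq 2^{[j]}$ with $2^{[j]} < p$, fix $q \in (2^{[j]}, \min(p, 2^{[j+1]}))$ and pick $s \in (q_*, \min(2^{[j]}, p_*))$; this interval is nonempty because $q < 2^{[j+1]}$ gives $q_* < 2^{[j]}$ and $q < p$ gives $q_* < p_*$. Such $s$ satisfies on the one hand $s < 2^{[j]} \leq p_+(L)$, granting access to the $\H^\infty$-calculus on $\L^s$ via Proposition~\ref{Prop: Hoo}, and on the other hand $s \leq p_*$, granting the $s$-isomorphism of $L^{\frac{1}{2}}$. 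Combining these with the Sobolev embedding $\IW^{1,s}_D(O) \hookrightarrow \L^{s^*}(O)^m$ (available since $s < p_* < d$), the key estimate for $u \in \L^s(O)^m \cap \L^2(O)^m$ reads
\[
	\|\e^{-tL} u\|_{s^*} \lesssim \|L^{-\frac{1}{2}} L^{\frac{1}{2}} \e^{-tL} u\|_{\IW^{1,s}_D(O)} \lesssim \|L^{\frac{1}{2}} \e^{-tL} u\|_s \lesssim t^{-\frac{1}{2}} \|u\|_s,
\]
where the third step exploits the decomposition $L^{\frac{1}{2}} \e^{-tL} = t^{-\frac{1}{2}} g(tL)$ with $g(z) = z^{\frac{1}{2}} \e^{-z} \in \H^\infty(\Sec_\varphi)$ for $\varphi \in (\omega, \nicefrac{\pi}{2})$. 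This establishes the $\L^s \to \L^{s^*}$ boundedness of $\{\e^{-tL}\}_{t>0}$ with the decay $t^{-d/2 \cdot (1/s - 1/s^*)} = t^{-1/2}$ required by Definition~\ref{Def: boundedness and ODE}. Since $q \in (s, s^*)$ by construction, Proposition~\ref{Prop: hypercontractive to bdd} yields $q \in \cI(L)$. Letting $q$ vary gives $p_+(L) \geq \min(p, 2^{[j+1]})$, closing the induction. The iteration terminates after finitely many steps because $2^{[j]} \to \infty$, so $p_+(L) \geq p$.

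The main technical obstacle is to secure the strict inequality $s < p_+(L)$ needed at each stage to invoke the $\H^\infty$-calculus of Proposition~\ref{Prop: Hoo}. This is exactly the role of the buffer condition $s < 2^{[j]}$, which is the analogue in our argument of the companion buffer $r > p$ used in the proof of Lemma~\ref{Lem: iso gives bdd semigroup} and avoids any appeal to openness arguments for $\cI(L)$ at the endpoint.
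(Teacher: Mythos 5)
Your proof is correct and follows essentially the same route as the paper: an induction on iterated Sobolev exponents using the decomposition $\e^{-tL} = L^{-\frac{1}{2}} L^{\frac{1}{2}} \e^{-tL}$, the Sobolev embedding, the $\L^s$-bounded $\H^\infty$-calculus, and Proposition~\ref{Prop: hypercontractive to bdd}. Your bookkeeping (working with an auxiliary $s$ strictly inside $(q_*, \min(2^{[j]}, p_*))$ rather than directly with $q_*$) avoids boundary cases slightly more carefully than the paper's phrasing, but the idea and all the ingredients are identical.
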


\begin{proof}
	In the light of Corollary~\ref{Cor: size of semigroup interval}, it suffices to treat the case $d\geq 3$. We employ an iterative argument. To this end, we make for $k\geq 0$ the following

	\textbf{Claim}: If $p_{+}(L) \geq 2^{[k]}$, $2^{[k]} < q < 2^{[k+1]}$, and $L^{\frac{1}{2}}$ is a $q_*$-isomorphism, then $p_{+}(L) \geq q$.

	\begin{proof}[Proof of the claim]
		We use the expansion $\e^{-t^2L} = L^{-\frac{1}{2}} L^{\frac{1}{2}} \e^{-t^2L}$. Observe that $2_* < q_* < 2^{[k]} \leq p_{+}(L)$. Hence, owing to Proposition~\ref{Prop: Hoo} and taking Corollary~\ref{Cor: size of semigroup interval} into account, the $\H^\infty(\Sec_\varphi)$-calculus of $L$ is bounded on $\L^{q_*}(O)^m$ for any $\varphi \in (\omega, \nicefrac{\pi}{2})$. Using this in the last step, together with the Sobolev embedding and the $q_*$-isomorphism property of $L^{\frac{1}{2}}$, yields
		\begin{align}
			\| \e^{-t^2 L} u \|_q \lesssim  \| L^{-\frac{1}{2}} L^{\frac{1}{2}} \e^{-t^2 L} u \|_{\IW^{1,q_*}_D(O)} \lesssim \| L^{\frac{1}{2}} \e^{-t^2 L} u \|_{q_*} \lesssim t^{-1} \| u \|_{q_*}.
		\end{align}
		This means that $\{ \e^{-tL} \}_{t>0}$ is $\L^{q_*} \to \L^q$ bounded. We conclude $q\leq p_{+}(L)$ with Proposition~\ref{Prop: hypercontractive to bdd}.
	\end{proof}

	Now, fix $\ell\geq 0$ such that $2 < \dots < 2^{[\ell]} < p \leq 2^{[\ell+1]}$. Observe that $L^{\frac{1}{2}}$ is by interpolation a $(2^{[j]})_*$-isomorphism for $j=1,\dots,\ell$. Hence, the claim yields by induction over $j$ that $p_{+}(L) \geq 2^{[\ell]}$. In a second step, the claim applied with $k=\ell$ and $q=p$ gives the assertion.
\end{proof}

\begin{proof}[Proof of Theorem~\ref{Thm: main result}~\ref{Item: main result 2}]
	We divide the proof into two cases.

	\textbf{Case 1}: $p<2$. This part was already shown in~\cite[Thm.~1.2~(ii)]{E}. Note that geometry in-there was only used to ensure the continuous embedding $\IW^{1,q}_D(O) \subseteq \L^{q^*}(O)^m$, which is a consequence of Proposition~\ref{Prop: extension} in our situation.

	\textbf{Case 2}: $p>2$. Assume that $L^\frac{1}{2}$ is a $p$-isomorphism. From $p=(p^*)_*$ and Lemma~\ref{Lem: necessary condition} it follows that $p_{+}(L) \geq p^*$ if $p<d$ and $p_+(L) = \infty$ otherwise. In particular, $2 < p < p_{+}(L)$, which translates to $p_{-}(L^*) < p' < 2$ by the duality formula $(\e^{-tL})^* = \e^{-tL^*}$.
	Hence, we obtain from the case $p<2$ for $L^*$ in Theorem~\ref{Thm: main result}~\ref{Item: main result 1} that $(L^*)^{\frac{1}{2}}$ is a $p'$-isomorphism. Then we conclude with Lemma~\ref{Lem: extrapolation L}  that $\cL$ is a $p$-isomorphism. Consequently, $\tilde q_{+}(L) \geq p$ as desired.
\end{proof}

\section{Endpoint cases}
\label{Sec: Endpoint cases}

To conclude this article, we show sharpness at the endpoints, provided they do not fall outside the interval $(1, \infty)$. This is Theorem~\ref{Thm: main result}~\ref{Item: main result 3}.

The following result is an application of {\u{S}}ne{\u{\ii}}berg's theorem \cite{Sneiberg-Original}. Here, we exploit that $\{ \IW^{1,p}_D(O) \}_{p\in (1,\infty)}$, $\{ \L^p(O)^m \}_{p\in (1,\infty)}$, and $\{ \IW^{-1,p}_D(O) \}_{p\in (1,\infty)}$ are complex interpolation scales. For the second scale, this is clear. That the first scale is an interpolation scale was discussed in the proof of Theorem~\ref{Thm: main result}~\ref{Item: main result 1} (note that the arguments in-there work both for the real and complex interpolation method). Finally, use duality to transfer the interpolation properties of the first scale to the last scale, see also~\cite[Prop.~5.2]{IP}.

\begin{proposition}
\label{Prop: Sneiberg}
	Assume that Assumption~\ref{Ass: D'} holds. Let $p\in (1,\infty)$ be such that $L^{\frac{1}{2}}$ is a $p$-isomorphism and suppose that there exists some $\eps' > 0$ such that $L^{\frac{1}{2}}$ is $q$-bounded for $q\in [p-\eps', p+\eps']$. Then there is $\eps' > \eps>0$ such that $L^{\frac{1}{2}}$ is a $q$-isomorphism for all $q\in (p-\eps,p+\eps)$. Similarly, if $\cL$ is a $p$-isomorphism, then there is again some $\eps>0$ such that $\cL$ is a $q$-isomorphism for all $q\in (p-\eps,p+\eps)$.
\end{proposition}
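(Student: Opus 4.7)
My plan is to apply {\u{S}}ne{\u{\ii}}berg's theorem in its quantitative form: if a bounded operator $T$ acts consistently between two complex interpolation scales on an interval of parameters and is invertible at one value of the parameter, then it stays invertible on a quantitative neighborhood of that value. The three interpolation scales needed here are exactly the ones identified in the paragraph preceding the statement, namely $\{\IW^{1,q}_D(O)\}_{q\in(1,\infty)}$ (obtained via retraction--coretraction from the whole-space interpolation result in~\cite{IP}, using Proposition~\ref{Prop: extension}), $\{\L^q(O)^m\}_{q\in(1,\infty)}$ (classical), and $\{\IW^{-1,q}_D(O)\}_{q\in(1,\infty)}$ (obtained by duality from the first scale). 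Assumption~\ref{Ass: D'} enters precisely here, since it is required for the whole-space identification of the Sobolev scale.

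For the statement on $L^{\frac{1}{2}}$, I would first observe that the hypothesis supplies a family of compatible bounded extensions $L^{\frac{1}{2}}\colon \IW^{1,q}_D(O)\to \L^q(O)^m$ for $q\in[p-\eps',p+\eps']$, with compatibility on the dense subspace $\IW^{1,q}_D(O)\cap\IW^{1,2}_D(O)$ built into the notion of $q$-boundedness used throughout the paper. Applied at the parameter $q=p$, where invertibility holds by hypothesis, {\u{S}}ne{\u{\ii}}berg's theorem then produces some $\eps\in(0,\eps')$ such that $L^{\frac{1}{2}}$ is a $q$-isomorphism for all $q\in(p-\eps,p+\eps)$. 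The point still to check is that the inverse furnished by the theorem coincides with $L^{-\frac{1}{2}}$ on $\L^q(O)^m\cap\L^2(O)^m$. This is the main subtlety and is resolved by observing that {\u{S}}ne{\u{\ii}}berg constructs the inverse as a perturbative series which also converges in the interpolation norm at $q=2$, where it must evaluate to the $\L^2$-inverse; compatibility at the single endpoint $q=2$ then forces compatibility on all overlaps.

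The statement on $\cL$ is easier, because the ambient $q$-boundedness $\cL\colon\IW^{1,q}_D(O)\to\IW^{-1,q}_D(O)$ is automatic on the whole of $(1,\infty)$ with norm controlled only by the coefficient bounds, as noted in Remark~\ref{Rem: q tilde plus}; no analogue of the small interval $[p-\eps',p+\eps']$ is needed. The same {\u{S}}ne{\u{\ii}}berg argument then gives the conclusion, and compatibility of the resulting inverse with the Lax--Milgram inverse on $\IW^{-1,q}_D(O)\cap\IW^{-1,2}_D(O)$ is handled exactly as in the previous paragraph by tracking the perturbative series back to $q=2$. I expect this compatibility bookkeeping to be the only genuine obstacle; the rest is a clean black-box application of a classical interpolation result.
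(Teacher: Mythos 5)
Your overall strategy — {\u{S}}ne{\u{\ii}}berg's theorem applied to the three complex interpolation scales $\{\IW^{1,q}_D(O)\}$, $\{\L^q(O)^m\}$, $\{\IW^{-1,q}_D(O)\}$, with the Sobolev scale coming from~\cite{IP} via retraction--coretraction (whence Assumption~\ref{Ass: D'}), and with the ambient $q$-boundedness of $\cL$ being automatic — is exactly the route the paper takes; the text preceding the proposition is essentially a sketch of this. You also correctly identify compatibility of the {\u{S}}ne{\u{\ii}}berg inverse with the $\L^2$-theory as the genuine point to verify.

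However, your proposed resolution of that compatibility point does not work as written. You argue that the perturbative series ``also converges in the interpolation norm at $q=2$, where it must evaluate to the $\L^2$-inverse.'' But the {\u{S}}ne{\u{\ii}}berg series is centered at $p$ and converges only in a small interval $(p-\eps,p+\eps)$ whose size depends on the bound and on the isomorphism constant at $p$; there is no reason for $2$ to lie in this interval unless $p$ is already close to $2$, which is not assumed. If $p$ is far from $2$, the series gives you nothing at $q=2$, so the chain you propose breaks. The correct chain passes through $p$ rather than $2$: the inverse produced by {\u{S}}ne{\u{\ii}}berg at a parameter $q\in(p-\eps,p+\eps)$ agrees with the inverse at $p$ on the overlap, because both invert the same consistent operator on the interpolation couple; and the inverse at $p$ agrees with $L^{-\frac12}$ on $\L^p\cap\L^2$ because the hypothesis that $L^{\frac12}$ is a ``$p$-isomorphism'' in the paper's sense (see Remark~\ref{Rem: q tilde plus} and the discussion of \emph{extrapolation} in the introduction) is precisely the statement that the $p$-inverse is the continuous extension of $L^{-\frac12}$ from $\L^p\cap\L^2$. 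Thus compatibility is inherited from the hypothesis at $p$, not rederived at $q=2$. The same remark applies to your treatment of $\cL$, where again compatibility with the Lax--Milgram inverse should be routed through the hypothesis that $\cL$ is a $p$-isomorphism.
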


\begin{remark}
	Observe that in Proposition~\ref{Prop: Sneiberg} we did not assume that $\cL$ is $q$-bounded in an interval around $p$. This is because $\cL$ is automatically $q$-bounded for all $q\in [1,\infty]$ by H\"{o}lder's inequality applied to the definition of $\cL$.
\end{remark}

\begin{proof}[Proof of Theorem~\ref{Thm: main result}~\ref{Item: main result 3}]
	To begin with, assume that $p_{-}(L) > 1$ and $L^{\frac{1}{2}}$ is a $p_{-}(L)$-isomorphism. By Corollary~\ref{Cor: square root upper bound}, $L^{\frac{1}{2}}$ is $q$-bounded for $q\in (p_{-}(L)_* \vee 1,2)$. Since $p_{-}(L)>1$ and $p_{-}(L)_* < p_{-}(L)$, we find $\eps'>0$ as required by Proposition~\ref{Prop: Sneiberg}. Then, let $\eps>0$ be provided by that proposition. It follows from Theorem~\ref{Thm: main result}~\ref{Item: main result 2} that $p_{-}(L) \leq p_{-}(L)-\eps$, which is a contradiction.

	Now assume that $\tilde q_{+}(L) < \infty$ and that $L^{\frac{1}{2}}$ is a $\tilde q_{+}(L)$-isomorphism. Then $p_{+}(L) \geq \tilde q_{+}(L)^* > q_{+}(L)$ by Lemma~\ref{Lem: necessary condition}. Duality reveals $p_{-}(L^*) < (\tilde q_{+}(L))'$. It follows from Theorem~\ref{Thm: main result}~\ref{Item: main result 1} that $(L^*)^\frac{1}{2}$ is a $(\tilde q_{+}(L))'$ isomorphism. Consequently, Lemma~\ref{Lem: extrapolation L} gives that $\cL$ is a $\tilde q_{+}(L)$-isomorphism. Then, as above, Proposition~\ref{Prop: Sneiberg} leads to the contradiction $\tilde q_{+}(L) + \eps \leq \tilde q_{+}(L)$.

\end{proof}

\section*{Acknowledgments}

The author thanks \enquote{Studienstiftung des deutschen Volkes} for financial and academic support. Furthermore, the author was partially supported by the ANR project RAGE: ANR-18-CE-0012-01. The author thanks Moritz Egert for valuable discussions around the topic, and the anonymous referee for suggestions regarding the presentation.

\end{document}